\theoremstyle:=definition,remark,plain\do{%
        \expandafter\g@addto@macro\csname th@\theoremstyle\endcsname{%
            \addtolength\thm@preskip\parskip
            }%
        }
\newtheorem{theorem}{Theorem}[section]
\newtheorem{lemma}[theorem]{Lemma}
\newtheorem{corollary}[theorem]{Corollary}
\theoremstyle{remark}
\newtheorem{warning}[theorem]{Warning}
\newcommand{\newcmd}[3]{\newcommand{#1}[#2]{#3\xspace}}
\newcommand{\mb}{\mathbb}
\newcommand{\mr}{\mathrm}
\newcmd{\df}{1}{\textbf{\textit{\color{cyan!10!black} #1}}}
\newcommand{\n}{${}$\newline}
\newcommand{\eps}{\varepsilon}
\renewcommand{\phi}{\varphi}
\DeclareMathOperator{\dist}{dist}
\DeclareMathOperator{\orth}{O}
\DeclareMathOperator{\sorth}{SO}
\newcommand{\sphere}{\mathbf{S}}
\DeclareMathOperator{\id}{id}
\newcommand{\projplane}{\mathbf{P}^2}
\DeclareMathOperator{\area}{area}
\DeclareMathOperator{\fradmul}{F_{radmul}}
\DeclareMathOperator{\bump}{bump}
\DeclareMathOperator{\fdivp}{F_{D4p}}
\DeclareMathOperator{\fivpa}{F_{4p\text{-}align}}
\DeclareMathOperator{\fiiipa}{F_{3p\text{-}align}}
\DeclareMathOperator{\fswivel}{F_{swivel}}
\DeclareMathOperator{\fdiip}{F_{D2p}}
\DeclareMathOperator{\fsivp}{F_{S^24p}}
\DeclareMathOperator{\fsiip}{F_{S^22p}}
\DeclareMathOperator{\fstitch}{F_{stitch}}
\DeclareMathOperator{\smagellan}{S_{E}}
\DeclareMathOperator{\samundsen}{S_{A}}
\DeclareMathOperator{\sgagarin}{S_{L}}
\DeclareMathOperator{\sorpheus}{S_{O}}
\DeclareMathOperator{\famundsen}{F_{A}}
\DeclareMathOperator{\fmagellan}{F_{E}}
\newcommand{\im}{\mathfrak{i}}
\DeclareMathOperator{\e}{e}
\begin{document}

\begin{center}
{\Large 
A strong equivariant deformation retraction \\
from the homeomorphism group of the projective plane \\
to the special orthogonal group \par
}

\bigskip

{\large \noindent
Michael~Gene~Dobbins
}


\begin{minipage}{0.85\textwidth}
\raggedright \footnotesize \singlespacing
\noindent
Department of Mathematical Sciences, Binghamton University (SUNY), Binghamton, \newline New York, USA. 
\texttt{mdobbins@binghamton.edu} 
\end{minipage}

\end{center}

\bigskip

\begin{abstract}
This is the third paper in a series on oriented matroids and Grassmannians. 
We construct a $(\orth_3\times\mb{Z}_2)$-equivariant strong deformation retraction from the homeomorphism group of the 2-sphere to $\orth_3$, where the action of $\mb{Z}_2$ is generated by antipodal reflection acting on the right, and $\orth_3$ acts on the left by isometry. 
Quotienting by the antipodal map induces a $\mathrm{SO}_3$-equivariant strong deformation retraction from the homeomorphism group of the projective plane to $\mathrm{SO}_3$. 
The same holds for subgroups of homeomorphisms that preserve the system of null sets. 
This confirms a conjecture of Mary-Elizabeth Hamstrom.
\end{abstract}

\section{Introduction}

Let us denote the multiplicative group on $\{1,-1\}$ by $\mb{Z}_2$ and denote the orthogonal and special orthogonal groups by $\orth_n$ and $\sorth_n$.
Let $\sphere^2$ denote the 2-sphere in $\mb{R}^3$ and $\projplane = \sphere^2/\mb{Z}_2$ denote the real projective plane.  Let $\hom(X)$ denote the group of homeomorphisms from a metric space $X$ to itself with the sup-metric.  
That is, the distance between maps $f,g \in \hom(X)$ is $\dist(f,g) = \sup\{\dist(f(x),g(x)) : x \in X \}$. 
Note that the induced metric topology is the same as the compact-open topology.
Note also that if $f \in \hom(\sphere^2)$ is $\mb{Z}_2$-equivariant, i.e.\ $f(-x) = -f(x)$, then there is an induced map on the projective plane, which we simply denote by $f$, that acts by $f(\{x,-x\}) = \{f(x),f(-x)\}$.  Let $\sorth_3$ act on $\projplane$ in this way.
Our goal is to prove the following.

\begin{theorem}\label{theoremDeformation}
There is a strong $(\orth_3 \times \mb{Z}_2)$-equivariant deformation retraction from $\hom(\sphere^2)$ to $\orth_3$,
where $(Q,s) \in \orth_3 \times \mb{Z}_2$ acts on $f \in \hom(\sphere^2)$ by $(Q,s)f = Q\circ f\circ s$.
\end{theorem}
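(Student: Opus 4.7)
The plan is to build the deformation retraction in two stages: first, define an equivariant retraction $r \colon \hom(\sphere^2) \to \orth_3$; then produce a canonical isotopy $t \mapsto h_t(f)$ from $f$ to $r(f)$ that is continuous in $f$ and constant whenever $f \in \orth_3$ (so that the retraction is strong).

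For the retraction, the natural candidate is the orthogonal factor in the polar decomposition of
$$M(f) \dfeq \int_{\sphere^2} f(x)\, x^T \, d\sigma(x),$$
where $\sigma$ is the rotationally invariant probability measure on $\sphere^2$. A direct change of variables gives $M(Q \circ f \circ s) = Q\, M(f)\, s$ for $(Q,s) \in \orth_3 \times \mb{Z}_2$ (using $s = \pm\id$), and $M(Q) = \tfrac{1}{3}Q$ for $Q \in \orth_3$. Whenever $M(f)$ is invertible the factorization $M(f) = r(f)\,P(f)$ with $r(f) \in \orth_3$ and $P(f)$ positive-semidefinite symmetric is unique, the equivariance $r(Q \circ f \circ s) = Q\, r(f)\, s$ is inherited from uniqueness, and $r(Q) = Q$. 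The obstacle is that $M(f)$ may be singular for pathological $f$; I would address this either by a smoothing-and-limit procedure (replacing $f$ by a canonical averaged $f_\varepsilon$, taking $r(f) \dfeq \lim_{\varepsilon \to 0} r(f_\varepsilon)$), or by defining $r$ more robustly from the image under $f$ of a canonical $\mb{Z}_2$-symmetric configuration using the oriented-matroid/Grassmannian machinery developed in the earlier papers of the series.

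For the isotopy, after left-composing with $r(f)^{-1}$ it suffices to isotope a homeomorphism $g$ with $r(g) = \id$ canonically to $\id$. I would use a spherical analogue of the Alexander trick around the $\mb{Z}_2$-invariant antipodal pair $\{N,-N\}$: shrink the nontrivial part of $g$ toward these basepoints through an equivariant family of self-conjugations, interpolating between $g$ at $t=0$ and $\id$ at $t=1$. The $\mb{Z}_2$-equivariance comes from the antipodal symmetry of the basepoints; the $\orth_3$-equivariance comes from building $h_t$ from $g$ via conjugations centered at the chosen pair, which commute appropriately with the left $\orth_3$-action after the initial translation by $r(f)^{-1}$.

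The principal obstacle is joint continuity of $(f,t) \mapsto h_t(f)$ in the sup-metric at \emph{arbitrary} $f$, including non-smooth and otherwise wild homeomorphisms — this is exactly where Hamstrom's conjecture is substantive. Alexander-trick homotopies are notoriously discontinuous at the endpoint where the support collapses unless one tunes the scaling carefully, so the scaling schedule must be calibrated using a uniform modulus-of-continuity argument exploiting compactness of $\sphere^2$. I expect the equivariance under $\orth_3 \times \mb{Z}_2$ to follow cleanly from the symmetric choice of construction; the assertion for subgroups preserving the null-set system should then follow because each $h_t$ is built by composing $g$ with diffeomorphisms (hence absolutely continuous maps), so the null-set class is preserved throughout the homotopy.
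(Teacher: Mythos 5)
Your proposal assumes away the hard part of the theorem in both of its stages. For the retraction, the orthogonal polar factor of $M(f)=\int_{\sphere^2} f(x)\,x^T\,d\sigma(x)$ is neither defined nor continuous on the closed set where $M(f)$ is singular, and homeomorphisms with singular (even zero) $M(f)$ certainly exist; the two proposed fixes (a smoothing limit, or ``oriented-matroid machinery'') are not arguments, and there is no reason the limit $\lim_{\eps\to 0}r(f_\eps)$ exists or varies continuously with $f$ near the singular locus. More seriously, the second stage does not work even in outline. The Alexander trick isotopes to the identity only those homeomorphisms that are \emph{already standard on the boundary of a disk} (equivalently, supported in a disk); a general $g\in\hom(\sphere^2)$ with $r(g)=\id$ fixes neither $N$ nor $-N$ and is ``nontrivial'' everywhere, so ``shrinking the nontrivial part toward the basepoints'' does not define a family of homeomorphisms at all. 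Producing, canonically and continuously in the sup-metric, a normalization of $g$ that is standard on some curve separating two antipodal points is precisely the content of Hamstrom's conjecture, and it is what the paper spends five of its six stages doing: it tracks the image of the equator, uses parameterizations $\fdivp$, $\fdiip$, $\fsivp$, $\fsiip$ built from the Carath\'eodory and Rad\'o theorems (so that they depend continuously on the Jordan curve in Fr\'echet distance), deforms that image to a great circle by area-balancing followed by level-set/curvature flow, and only then applies an Alexander-type combing on each hemisphere. Incidentally, your worry about the Alexander endpoint is misplaced --- the standard scaling $z\mapsto tg(z/t)$ is uniformly within $2t$ of the identity, so joint continuity at the collapse is automatic; the discontinuity danger sits in the normalization step you have not supplied.

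A secondary but real issue is the equivariance bookkeeping: left-composing by $r(f)^{-1}$ reduces the $(Q,s)$-action to conjugation of $g$ by the antipodal map, so your isotopy must satisfy $h_t(sgs)=s\,h_t(g)\,s$ \emph{and} commute with the residual $\orth_3$-conjugation that appears when comparing $r(Qf)^{-1}Qf$ with $r(f)^{-1}f$; this forces every auxiliary choice (the pair $\{N,-N\}$ included) to be made equivariantly rather than fixed once, which is why the paper derives all its basepoints and curves from $f$ itself. As written, the proposal is a plausible strategy statement, not a proof.
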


\begin{corollary}\label{corollaryDeformation}
There is a strong $\sorth_3$-equivariant deformation retraction from $\hom(\projplane)$ to $\sorth_3$,
where $Q \in \sorth_3$ acts on $f \in \hom(\projplane)$ by $Qf = Q\circ f$. 
\end{corollary}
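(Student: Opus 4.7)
The plan is to descend the deformation retraction of Theorem~\ref{theoremDeformation} along the antipodal double cover $\sphere^2 \to \projplane$. First I would set up the quotient: since $\sphere^2$ is the universal cover of $\projplane$, every $\bar f \in \hom(\projplane)$ has exactly two $\mb{Z}_2$-equivariant lifts $f$ and $f \circ (-\id)$ in $\hom(\sphere^2)$, so the lifting map from the subspace $\hom_{\mb{Z}_2}(\sphere^2)$ of equivariant homeomorphisms to $\hom(\projplane)$ is a topological quotient for the right $\mb{Z}_2$-action $f \mapsto f \circ (-\id)$. Its restriction to $\orth_3$ has image $\orth_3 / \{\pm\id\} \cong \sorth_3$, using that $n=3$ is odd so $-\id$ has determinant $-1$ and is central in $\orth_3$.

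Next, writing $H$ for the deformation retraction supplied by the theorem, I would observe that $(\orth_3 \times \mb{Z}_2)$-equivariance applied to the element $(-\id,-1)$ forces $H$ to preserve the subspace $\hom_{\mb{Z}_2}(\sphere^2)$: an equivariant $f$ is fixed by $(-\id,-1)$, hence so is $H(f,t)$, and that fixed-point condition is precisely equivariance of $H(f,t)$. The right $\mb{Z}_2$-equivariance of $H$ then gives $H(f \circ (-\id),t) = H(f,t) \circ (-\id)$, which is exactly what is needed for $H$ to descend to a continuous homotopy $\bar H \colon \hom(\projplane) \times [0,1] \to \hom(\projplane)$.

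Finally, I would check that $\bar H$ satisfies the three properties of a strong $\sorth_3$-equivariant deformation retraction, all of which descend transparently from those of $H$: $\bar H(\bar f,0) = \bar f$ follows from $H(f,0) = f$; $\bar H(\bar f,1) \in \sorth_3$ follows from $H(f,1) \in \orth_3$ together with the identification of the quotient; $\bar H(\bar Q,t) = \bar Q$ for $\bar Q \in \sorth_3$ follows by lifting to $Q \in \orth_3$ and using $H(Q,t) = Q$; and left $\sorth_3$-equivariance of $\bar H$ descends from left $\orth_3$-equivariance of $H$, the $\{\pm\id\}$ factor acting trivially on $\hom(\projplane)$. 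The entire conceptual content of the corollary lies in Theorem~\ref{theoremDeformation}, so there is no real obstacle; the only fussy point is continuity of the descent, which is routine because $[0,1]$ is locally compact Hausdorff and hence the relevant quotient commutes with the product.
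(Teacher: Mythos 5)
Your proposal is correct and is essentially the argument the paper intends: the paper gives no separate proof of the corollary, deriving it (as indicated in the abstract) by exactly this descent along the double cover $\sphere^2 \to \projplane$, using the $(-\id,-1)$-fixed-point characterization of equivariant lifts and the right $\mb{Z}_2$-equivariance to push the retraction down to $\hom(\projplane)$ and $\orth_3/\{\pm\id\} \cong \sorth_3$. The only points you assert rather than verify -- that the projection $\hom_{\mb{Z}_2}(\sphere^2)\to\hom(\projplane)$ is a surjective quotient (open) map for the sup-metrics -- are standard and routine to check.
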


In 1923 Hellmuth Kneser showed that there is a strong deformation retraction from $\hom(\sphere^2)$ to $\orth_3$ \cite{kneser1926deformationssatze}, and later Bjorn Friberg gave another more elementary proof \cite{friberg1973topological}.  
Dobbins showed that a strong deformation retraction from $\hom(\sphere^2)$ to $\orth_3$ can be made to be $\orth_3$-equivariant \cite{dobbins2021grassmannians}.
Mary-Elizabeth Hamstrom 
showed that $\hom(\projplane)$ and $\sorth_3$ have isomorphic homotopy groups. 
Hamstrom further conjectured that $\hom(\projplane)$ deformation retracts to the Lie group of $\sorth_3$ acting on $\projplane$ \cite[p.~43]{hamstrom1965homotopy}, which is confirmed by Corollary \ref{corollaryDeformation}.

Let $\hom_\mathrm{N}(X)$ denote the subgroup of $f \in \hom(X)$ such that for all $A \subset X$, $A$ is a null set if and only if $f(A)$ is a null set.  
In other words, both $f$ and $f^{-1}$ have the Luzin N property. 
In this case we say $f$ preserves nullity. 
We also prove analogous results for these groups.

\begin{theorem}\label{theoremDeformationN}
There is a strong $(\orth_3 \times \mb{Z}_2)$-equivariant deformation retraction from $\hom_\mathrm{N}(\sphere^2)$ to $\orth_3$.
\end{theorem}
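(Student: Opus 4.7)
The plan is to reuse the deformation retraction $H \colon \hom(\sphere^2) \times [0,1] \to \hom(\sphere^2)$ that proves Theorem \ref{theoremDeformation}, and to verify that it already restricts to a strong $(\orth_3 \times \mb{Z}_2)$-equivariant deformation retraction on the subgroup $\hom_\mathrm{N}(\sphere^2)$. Since every $Q \in \orth_3$ acts on $\sphere^2$ as an isometry, we automatically have $\orth_3 \subset \hom_\mathrm{N}(\sphere^2)$, so the endpoint value $H(f,1) \in \orth_3$ always lies in the target subgroup, and the equivariance, continuity, and strong-retraction conditions all carry over from Theorem \ref{theoremDeformation} as soon as we know that $H$ preserves $\hom_\mathrm{N}(\sphere^2)$ fibrewise. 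Thus the entire statement reduces to showing that whenever $f$ preserves nullity, so does every intermediate map $H(f,t)$.

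To prove this I would walk through the construction of $H$ stage by stage. At each stage $H(f,t)$ can be written in the form $g_t \circ f \circ h_t$, where $g_t$ and $h_t$ are auxiliary homeomorphisms of $\sphere^2$ built by explicit geometric operations — radial adjustments, swivels, stitches, and stereographic interpolations — that depend continuously on $f$ and $t$. Because $\hom_\mathrm{N}(\sphere^2)$ is a group under composition (the Luzin N property is preserved under composition and under taking inverses), it suffices to show that each auxiliary $g_t$ and $h_t$ already lies in $\hom_\mathrm{N}(\sphere^2)$. The cleanest sufficient condition is that $g_t$ and $h_t$ are bi-Lipschitz, since a bi-Lipschitz self-homeomorphism of $\sphere^2$ distorts $2$-dimensional Hausdorff measure by at most a bounded factor and therefore preserves nullity in both directions; this should hold directly, from uniform bounds on the building blocks, at most stages of the construction.

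The main obstacle will be the stages where the auxiliary homeomorphisms are only continuous, not bi-Lipschitz — for instance, reparametrizations used to stitch two partial maps along a common arc, or radial interpolations near a singular point whose Lipschitz constant may blow up as $t$ approaches an endpoint of the stage. For such stages I would either modify the construction so that the intermediate maps become bi-Lipschitz (the construction in Theorem \ref{theoremDeformation} has slack in the choice of auxiliary arcs and reparametrizations), or else verify the Luzin N property directly from the explicit formulas by localizing on finitely many cells on which the map is piecewise smooth and invoking the fact that a countable union of null sets is null. Since there are only finitely many stages and a short finite list of primitive operations, the whole theorem reduces to a finite collection of local, purely measure-theoretic checks. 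Equivariance under $(Q,s)$ is automatic because conjugating the construction by an isometry of $\sphere^2$ replaces each $g_t,h_t$ by an isometric conjugate, which remains bi-Lipschitz with the same constants.
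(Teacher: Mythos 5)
Your top-level reduction is the right one and matches the paper's: $\hom_\mathrm{N}(\sphere^2)$ is a group containing $\orth_3$, each stage of the retraction composes $f$ with auxiliary homeomorphisms built from a short list of primitives, and equivariance/continuity/strongness carry over once those primitives preserve nullity. You also correctly flag the stitching step's radial interpolation as the danger spot. But the mechanism you propose for the verification does not work, and the fix you defer to an ``either/or'' is in fact the entire content of the theorem. First, bi-Lipschitzness is unattainable as a sufficient condition: the building blocks $\fdivp$ and $\fdiip$ are Carath\'eodory extensions of Riemann maps onto regions bounded by arbitrary Jordan curves $f(e_3^\bot)$, and such extensions are not bi-Lipschitz up to the boundary in any generality. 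What actually works is that an internally conformal homeomorphism is a smooth diffeomorphism on the open disk (hence Luzin N there) and preserves nullity globally \emph{provided the boundary curve has null area}. That is exactly where the hypothesis $f \in \hom_\mathrm{N}(\sphere^2)$ must be used --- it guarantees $\smagellan = f(e_3^\bot)$ is a null set --- and your proposal never invokes it.

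Second, the original construction genuinely does not restrict to $\hom_\mathrm{N}(\sphere^2)$, so ``verify it already works'' is not an option. In the original $\fstitch$, the inner map on the punctured disk is the radial extension $z \mapsto |z|\,[f_1^{-1}\circ f_0](z/|z|)$ of a circle homeomorphism; circle homeomorphisms arising as boundary correspondences of conformal maps can send positive-length sets to null sets, and the radial extension then sends a positive-area cone onto a null set, destroying the Luzin N property. The paper's Section on nullity-preserving homeomorphisms exists precisely to repair this: it replaces the radial extension by $\fradmul$, which mollifies the boundary homeomorphism on each concentric circle by convolution with a bump function whose width shrinks toward the boundary, proves the result is continuously differentiable with nonvanishing differential off a null set (hence nullity-preserving in both directions), and then re-proves the continuity and $(\orth_3\times\mb{Z}_2)$-symmetry properties of $\fstitch$ for the new definition so that all six stages of the proof of Theorem \ref{theoremDeformation} go through unchanged. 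Without constructing such a replacement and re-verifying those properties, your argument has a gap exactly at the step you identified as the main obstacle.
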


\begin{corollary}\label{corollaryDeformationN}
There is a strong $\sorth_3$-equivariant deformation retraction from $\hom_\mathrm{N}(\projplane)$ to $\sorth_3$.
\end{corollary}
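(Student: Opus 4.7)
The plan is to deduce Corollary \ref{corollaryDeformationN} from Theorem \ref{theoremDeformationN} by exactly the quotient procedure that yields Corollary \ref{corollaryDeformation} from Theorem \ref{theoremDeformation}, with an added check that nullity preservation is compatible with the double cover $p\colon \sphere^2 \to \projplane$. The only new content beyond the unrestricted case is this compatibility.

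First I would recall the standard identification: a homeomorphism $f \in \hom(\projplane)$ lifts uniquely up to sign to a $\mb{Z}_2$-equivariant homeomorphism $\tilde f \in \hom(\sphere^2)$, and every $\mb{Z}_2$-equivariant $\tilde f$ descends to $\projplane$. This realizes $\hom(\projplane)$ as the quotient of the $\mb{Z}_2$-equivariant subgroup of $\hom(\sphere^2)$ by the involution $\tilde f \mapsto -\tilde f$. Because $\det(-\id)=-1$, in each pair $\{Q,-Q\} \subset \orth_3$ exactly one representative lies in $\sorth_3$, so on this side the quotient $\orth_3/\{\pm\id\}$ is canonically homeomorphic to $\sorth_3$.

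Next, the retraction $H$ from Theorem \ref{theoremDeformationN} is equivariant with respect to $(-\id,-1)\in\orth_3\times\mb{Z}_2$, whose fixed-point set in $\hom_\mathrm{N}(\sphere^2)$ is precisely the $\mb{Z}_2$-equivariant subgroup; and $H$ is also equivariant with respect to the involution $(-\id,1)$, which acts by $\tilde f \mapsto -\tilde f$. Consequently $H$ restricts to a strong deformation retraction of the equivariant subgroup and descends through the $\{\pm\id\}$-quotient, producing the desired retract onto $\sorth_3$. The $\sorth_3$-equivariance comes from the surviving $\orth_3$-equivariance on the left, which commutes with the quotient.

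The hard part, if there is one, is checking that the correspondence $f\leftrightarrow\tilde f$ restricts to a bijection between $\hom_\mathrm{N}(\projplane)$ and the $\mb{Z}_2$-equivariant part of $\hom_\mathrm{N}(\sphere^2)$. Since $p$ is a $2$-to-$1$ local isometry, a Borel set $A\subset\projplane$ has zero $2$-dimensional Hausdorff measure if and only if $p^{-1}(A)\subset\sphere^2$ does, and since $\tilde f$ covers $f$ one has $\tilde f(p^{-1}(A))=p^{-1}(f(A))$; combining these shows $f$ preserves nullity iff $\tilde f$ does. With this verification in hand, restricting Theorem \ref{theoremDeformationN}'s retraction to the equivariant subgroup of $\hom_\mathrm{N}(\sphere^2)$ and descending to the $\{\pm\id\}$-quotient gives the claimed strong $\sorth_3$-equivariant deformation retraction from $\hom_\mathrm{N}(\projplane)$ to $\sorth_3$.
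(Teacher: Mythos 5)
Your proposal is correct and matches the paper's intended argument: the paper does not write out a separate proof of this corollary, but the quotient-by-the-antipodal-map construction described in the abstract (restrict the retraction of Theorem \ref{theoremDeformationN} to the fixed set of $(-\id,-1)$, then descend through the $\{\pm\id\}$-action, identifying $\orth_3/\{\pm\id\}$ with $\sorth_3$) is exactly what you describe. Your verification that nullity preservation transfers between $f$ and its lift $\tilde f$ via the $2$-to-$1$ local isometry is the right supplementary check for the $\hom_\mathrm{N}$ case.
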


For us, the subgroup $\hom_\mathrm{N}(\sphere^2)$ has the advantage that 
for a fixed disk $D \subset \sphere^2$, the area of $f(D)$ is continuous as a function on $f \in \hom_\mathrm{N}(\sphere^2)$, which is not the case on $\hom(\sphere^2)$.  
See Lemma \ref{lemma-area-convergence} below.

\subsection{Motivation from oriented matroids}

Groups of automorphisms are of fundamental interest. 
In this case, however, the author was motivated by a conjecture from combinatorics, which in turn has applications for working with vector bundles.  
Some background on the conjecture that the author wrote in the previous papers in the series is repeated in this section \cite{dobbins2021continuous,dobbins2021grassmannians}. 
The material in this subsection is not needed to understand the rest of the paper. 

An oriented matroid is a combinatorial analog to a real vector space, but where we only keep track of sign information.  One way to obtain an oriented matroid is as the set of all sequences of signs of all the vectors in a vector subspace of $\mb{R}^n$.  For example, the oriented matroid obtained from the space $\{(x,y) \in \mb{R}^2 : x+y = 0\}$ is given by the set  
$\{(+,-),(0,0),(-,+)\}$. 
However, oriented matroids are defined by purely combinatorial axioms, and not all oriented matroids are obtained in this way.  One way the Grassmannian is defined is as the set of all $k$-dimensional vector subspaces of a given vector space, and this set is given a metric.  Oriented matroids come equipped with analogs of dimension and subspace, and an OM-Grassmannian is a finite simplicial complex that is defined analogously from an oriented matroid. 
Nicolai Mnëv and Günter Ziegler conjectured that each OM-Grassmannian of a realizable oriented matroid is homotopy equivalent to the corresponding real Grassmannian \cite[Conjecture 2.2]{mnev1993combinatorial}.

Gaku Liu showed that this conjecture does not hold in general; i.e.\ there is an OM-Grassman\-nian of a realizable oriented matroid that is not homotopy equivalent to the corresponding real Grassmannian \cite{liu2020counterexample}.  
Although the full conjecture is false, special cases remain open, such as for MacPhersonians, which are the OM-Grassmannians of the oriented matroids corresponding to $\mb{R}^n$, and in particular for the rank 3 case, which is the analog of the Grassmannian consisting of 3-dimensional subspaces of $\mb{R}^n$.
Mnëv and Ziegler had listed the rank 3 case as a theorem with the expectation that a proof would later appear in Eric Babson's Ph.D.\ thesis, but then it did not \cite{mnev1993combinatorial,babson1993combinatorial}.  Also, an erroneous proof that each MacPhersonian is homotopy equivalent to the corresponding Grassmannian in all ranks was published and then retracted \cite{biss2003homotopy,biss2009erratum}.


Jim Lawrence showed that oriented matroids can be characterized as the combinatorial cell decompositions of the sphere arising from essential pseudosphere arrangements \cite{folkman1978oriented}.  Lawrence's theorem provides a topological model for the combinatorial axioms of oriented matroids.  In the rank 3 case, these are pseudocircle arrangements, which are collections of oriented simple closed curves in the 2-sphere such that every pair of curves either coincide or intersect at exactly 2 points, in which case any third curve either separates the 2 points or passes though both points.  An arrangement is said to be essential when no single point is contained in all pseudospheres. 

The first paper in the series 
introduced spaces of weighted essential pseudosphere arrangements called pseudolinear Grassmannians, and showed that each rank 3 pseudolinear Grassmannian is homotopy equivalent to the corresponding real Grassmannian.  
The pseudolinear Grassmannians serve as an intermediate space between the Grassmannians and MacPhersonians, and the main results of the first paper represent a step toward showing that the conjecture holds for the rank 3 MacPhersonians by showing homotopy equivalence between this intermediate space and one side \cite{dobbins2021grassmannians}.  
The second paper provided a crucial tool needed for the present paper, which will be used in Subsection \ref{subsectionUntangling} \cite{dobbins2021continuous}.

The present paper takes another step toward showing the rank 3 MacPhersonian case of the conjecture, by providing a tool for replacing pseudolinear Grassmannians with nicer spaces, namely spaces of \emph{antipodally symmetric} weighted pseudocircle arrangements where every pseudocircle has \emph{area 0}.  The first advantage is that we can disregard the last property of pseudocircle arrangements, namely that a third curve must separate or pass though the points where two other curves intersect, since this property is already guaranteed to hold in the symmetric case.  The second advantage is that the area of the cells of an arrangement vary continuously with respect to the arrangement. 
The present paper will not go into the precise details on how of Theorem \ref{theoremDeformationN} is applied to pseudocircle arrangements.





\subsection{Organization}

A central idea in the proof of Theorem \ref{theoremDeformation} is to find a simple closed curve that depends continuously on the initial homeomorphism $f \in \hom(\sphere^2)$ and then continuously deform that curve to a great circle.  
Then, each hemisphere on either side of the resulting great circle can be dealt with separately. 
To do so, we have to extend the deformation of the closed curve to a deformation of the homeomorphism $f$, which we do by using various canonical versions of the Jordan-Schoenflies theorem.
For this, we use parameterizations of the sphere or of disks on the sphere that are each defined explicitly using complex analysis and the Riemann mapping theorem.
In Section \ref{sectionParameterizations}, we present these parameterizations. 
In Section \ref{sectionDeformation}, we prove Theorem \ref{theoremDeformation}.
In Section \ref{sectionDeformationN}, we modify the proof from Section \ref{sectionDeformation} to arrive at Theorem \ref{theoremDeformationN}.

\subsection{Definitions and Notation}

Let $\mathbf{D}$ be the closed unit disk and $\sphere^1$ be the unit circle in the complex plane.
Let $e_1,e_2,e_3$ be the standard basis vectors in 3-space, and let $e_{-i} = -e_i$. 
We will use $\im$ for the imaginary unit, and we let $\overline{z} = a -b\im$ be the complex conjugate of $z = a +b\im$. 
Let $\overline{\mb{R}}$ and $\overline{\mb{C}}$ be the one point compactifications of the real line and the complex plane by adjoining $\infty$.
We denote the derivative of a function $f$ by $\partial f$ or by $\partial_xf(x)$ and we denote the boundary of a topological disk $C$ by $\partial C$.
We will generally use square brackets around functions and round brackets around arguments passed to functions.
We use this for partial function application. 
For example, given a function $f : W \times X \to Y$ and $w \in W$, 
we have $f(w) : X \to Y$ by $[f(w)](x) = f(w,x)$. 
We will also use square brackets when composing, inverting, and adjoining functions.
For example, 
given $f : W \times X \to Y$, $g : Z \to Y$, and $h : Z \to W$ we write $x = [[f(w)]^{-1}\circ g](z)$ for the value $x$ such that $f(w,x) = g(z)$, provided that $f(w) : X \to Y$ is invertible. 
Also, $[g,h](z) = (g(z),h(z))$. 
We may distinguish multiplicative inverse by using round brackets, i.e., $(x)^{-1} = \frac1x$. 
We let $[x \mapsto \Phi]$ denote the anonymous function that substitutes its input for the value of a variable $x$ in the formula $\Phi$.
For example $[z \mapsto \overline{z}]$ denotes the function that returns the complex conjugate of its input. 
When composing $f$ with multiplication by a constant $c$, we simply write $cf = [x \mapsto cf(x)]$ and $fc = [x \mapsto f(cx)]$, and similarly we write $Qf$ and $fQ$ for $Q \in \orth_3$ acting on the sphere. 
We denote real intervals by $(a,b]_\mb{R} = \{x \in \mb{R} : a < x \leq b\}$ for bounds $a,b \in \mb{R}$ with any combination of round or square brackets for (half) open or closed intervals.

The \df{Fréchet distance} between a pair of Jordan curves is defined by 
\[ \dist_\mr{F} (\gamma_1,\gamma_0) = \inf_{\phi_1,\phi_0} \sup_x \|\phi_1(x) -\phi_0(x) \|  \]
where the $\phi_i : \sphere^1 \to \gamma_i$ are homeomorphisms. 
We will also use Hausdorff distance, which is a coarser metric than Fréchet distance.
\df{Hausdorff distance} is defined by 
\[
\dist_H(\gamma_1,\gamma_0) = \inf \left\{ \delta : \gamma_1 \subseteq \gamma_0 \oplus \delta, \gamma_0 \subseteq \gamma_1 \oplus \delta \right\}. 
\]
The \df{boundary Fréchet distance} between a pair of topological disks is the Fréchet distance between the boundaries of the disks.
We say a curve is \df{rectifiable} when it has finite length and has \df{null area} when its area is 0.

We say a map is \df{internally conformal} when the restriction of the map to the interior of its domain is conformal. 
We will use the following two extensions of the Riemann mapping theorem. 
Carathéodory's mapping theorem says that, for every Jordan curve in $\mb{C}$, there is an internally conformal homeomorphism from the closed disk to the region bounded by the curve.
Radó's theorem says that, for Jordan curves $\gamma_k$ 
and maps $h_k$ as in the Carathéodory's mapping theorem, 
if 
$h_k(0)$ is fixed and $\partial h_k(0) > 0$ for all $k$,
and $\gamma_k \to \gamma_\infty$ in Fréchet distance,
then $h_k \to h_\infty$ uniformly \cite{pommerenke1992boundary}.

\section{Parameterizations}\label{sectionParameterizations}

In this section we first introduce a notion of convergence called invariable convergence.
We will then present several parameterizations of the sphere and disks on the sphere and their properties.
These parameterizations will be used in the next section to construct a deformation retraction.

\subsection{Invariable convergence}

A vitally important property for us is continuity. 
Here we will use parameterizations of a class of topological disks, and will need these parameterizations to depend continuously on the boundary of the disk and on some specified points on the boundary. 
What's more, we will need the inverse map of the parameterization to depend continuously on the boundary and the specified points. 
In this case, uniform convergence is not applicable, since the domain of the map can change.
To deal with this, we introduce a related notion of convergence called invariable convergence that is relevant to sequences of maps where the domain changes.


Let $X,Y$ be metric spaces with metrics $\dist_X,\dist_Y$, and let $X_k \subset X$ for $k \in \{1,\dots,\infty\}$.
We say a sequence of functions $f_k : X_k \to Y$ converges \df{invariably} to $f_\infty : X_\infty \to Y$ when the following holds:  $X_k \to X_\infty$ in Hausdorff distance, and for all $\eps > 0$, there is $K(\eps)$ and $\delta(\eps) > 0$ such that for all $k \in \mb{N}$, all $x_k \in X_k$, and all $x_\infty \in X_\infty$, if $k \geq K(\eps)$ and $\dist_X(x_k,x_\infty) < \delta(\eps)$ then $\dist_Y(f_k(x_k), f_\infty(x_\infty)) < \eps$.
Note that $K$ and $\delta$ cannot depend on $x_k$ or $x_\infty$.

\begin{lemma}\label{lemma-invariable-convergence}
Let $X,Y,Z$ be compact metric spaces, and let $f_k : X_k \to Y_k$ and $g_k: Y_k \to Z$ be maps defined on $X_k \subset X$ and $Y_k \subset Y$ for $k \in \{1,\dots,\infty\}$.
\begin{enumerate}
\item \label{item-fixed-domain}
For $X_k = X$ fixed, \n
$f_k \to f_\infty$ invariably if and only if 
$f_k \to f_\infty$ uniformly and $f_\infty$ is continuous.
\item \label{item-inverse}
If $f_k : X_k \to Y$ is a homeomorphic embedding of $X_k$ in $Y$ and $f_k \to f_\infty$ invariably, then $f_k^{-1} \to f_\infty^{-1}$ invariably. 
\item \label{item-composition}
If $f_k \to f_\infty$ and $g_k \to g_\infty$ invariably, then $g_k \circ f_k \to g_\infty \circ f_\infty$ invariably. 
\end{enumerate}
\end{lemma}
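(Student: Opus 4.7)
The plan is to handle the three items in order, relying on compactness of $X$, $Y$, and $Z$ throughout.

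Part (\ref{item-fixed-domain}) is mostly unpacking definitions. With $X_k = X$, taking $x_k = x_\infty$ in the invariable convergence condition yields uniform convergence, and allowing $x_k$ to vary toward $x_\infty$ (with $k$ large) yields continuity of $f_\infty$. For the reverse direction, given $\eps > 0$, use uniform continuity of $f_\infty$ on the compact space $X$ to bound $\dist_Y(f_\infty(x_k), f_\infty(x_\infty))$ when $\dist_X(x_k, x_\infty)$ is small, and use uniform convergence to bound $\dist_Y(f_k(x_k), f_\infty(x_k))$, then combine via the triangle inequality. I would also extract from this argument, for use in part (\ref{item-inverse}), the more general fact that if $f_k \to f_\infty$ invariably with domains varying, then $f_\infty$ is continuous: given $x_\infty^0 \in X_\infty$ and $\eps > 0$, the $\delta$ supplied by the definition works, using Hausdorff convergence $X_k \to X_\infty$ to produce $x_k \in X_k$ simultaneously close to $x_\infty^0$ and to any nearby $x_\infty \in X_\infty$.

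Part (\ref{item-composition}) is a direct $\eps$-$\delta$ chase: given $\eps > 0$, extract $(K_g, \delta_g)$ from the invariable convergence of $g_k$ handling $\eps$, then extract $(K_f, \delta_f)$ from the invariable convergence of $f_k$ with tolerance $\delta_g$. Setting $K = \max(K_f, K_g)$ and $\delta = \delta_f$ and applying the $g$-condition to the pair $(f_k(x_k), f_\infty(x_\infty)) \in Y_k \times Y_\infty$ completes the step. One also needs $X_k \to X_\infty$ in Hausdorff distance, which is hypothesized.

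The main obstacle is part (\ref{item-inverse}), for which I would argue by contradiction. If the conclusion failed, there would exist $\eps_0 > 0$ and sequences $k_n \to \infty$, $y_n \in f_{k_n}(X_{k_n})$, and $y_n' \in f_\infty(X_\infty)$ with $\dist_Y(y_n, y_n') \to 0$ but $\dist_X(x_n, x_n') \geq \eps_0$, where $x_n = f_{k_n}^{-1}(y_n)$ and $x_n' = f_\infty^{-1}(y_n')$. By compactness, after passing to subsequences, $x_n \to x^*$ and $x_n' \to x^{**}$; the Hausdorff convergence $X_k \to X_\infty$ forces $x^* \in X_\infty$, and $x^{**} \in X_\infty$ since $X_\infty$ is closed. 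Applying the invariable convergence of $f_k$ at the pair $(x_n, x^*)$ gives $y_n = f_{k_n}(x_n) \to f_\infty(x^*)$, and continuity of $f_\infty$ (established above) gives $y_n' = f_\infty(x_n') \to f_\infty(x^{**})$. Since $\dist_Y(y_n, y_n') \to 0$, we conclude $f_\infty(x^*) = f_\infty(x^{**})$, contradicting the injectivity of $f_\infty$ as a homeomorphic embedding together with the bound $\dist_X(x^*, x^{**}) \geq \eps_0$. A small side verification is that the images $f_k(X_k)$ converge in Hausdorff distance to $f_\infty(X_\infty)$, so that the domain-convergence clause in the definition of invariable convergence is satisfied by the sequence $f_k^{-1}$; this follows from invariable convergence of $f_k$ applied to points supplied by Hausdorff convergence $X_k \to X_\infty$, together with compactness.
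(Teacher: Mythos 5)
Your proposal is correct and follows essentially the same route as the paper's proof: the same definition-unpacking with Heine--Cantor for part (1), the same compactness-and-subsequence contradiction for part (2), and the same $\eps$--$\delta$ chase for part (3). If anything, you are slightly more careful than the paper in two spots — noting that $f_k(X_k) \to f_\infty(X_\infty)$ in Hausdorff distance must be checked for part (2), and choosing the threshold $K_f$ at tolerance $\delta_g(\eps)$ rather than $\eps$ in part (3) — but these are refinements of the same argument, not a different one.
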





\begin{proof}[Proof of Lemma \ref{lemma-invariable-convergence} part \ref{item-fixed-domain}]
Suppose $f_k \to f_\infty$ uniformly and $f_\infty$ is continuous.
Then, by the Heine-Cantor theorem, $f_\infty$ is uniformly continuous.
Consider $\eps > 0$.
Since $f_k \to f_\infty$ uniformly, there is $K$ such that for all $x \in X$ and all $k \geq K$,
$\dist_Y(f_k(x),f_\infty(x)) < \eps/2$.
Since $f_\infty$ is uniformly continuous, there is $\delta$ such that for all $x,x' \in X$, if $\dist_X(x,x') < \delta$, then $\dist_Y(f_\infty(x),f_\infty(x')) < \eps/2$.
Hence, for all $x_k,x_\infty \in X$, if $k \geq K$ and $\dist_X(x_k,x_\infty) < \delta$, then  
\[\dist_Y(f_k(x_k), f_\infty(x_\infty)) 
\leq \dist_Y(f_k(x_k), f_\infty(x_k)) + \dist_Y(f_\infty(x_k), f_\infty(x_\infty))  
< \eps,\]
so $f_k \to f_\infty$ invariably.

To show the implication in the other direction, suppose $f_k \to f_\infty$ invariably.
First, fix an arbitrary point $x_\infty \in X$ and let $x_k = x_\infty$ for all $k$. Then, the definition of invariable convergence in this case is equivariant to uniform convergence.
Next, fix $\eps > 0$ and $\widetilde x \in X$ such that $\dist_X(x_\infty,\widetilde x) < \delta(\eps)$ where $\delta$ is given in the definition of invariable convergence.  Then, in the limit as $k \to \infty$, we have $f_k(x_k) = f_k(x_\infty) \to f_\infty(x_\infty)$, so $\dist (f_\infty(x_\infty),f_\infty(\widetilde x)) < \eps$, which means that $f_\infty$ is continuous.
\end{proof}

\begin{proof}[Proof of Lemma \ref{lemma-invariable-convergence} part \ref{item-inverse}]
Suppose for the sake of contradiction that $f_k^{-1}$ does not converge invariably to $f_\infty^{-1}$ under the hypotheses of the lemma.
Then, there would be $\eps > 0$ such that for all $K$ and all $\delta>0$, there is some $k \geq K$ and $y_k \in f(X_k)$ and $y_\infty \in f(X_\infty)$ such that $\dist_Y(y_k,y_\infty) < \delta$ and $\dist_X(f_k^{-1}(y_k),f_\infty^{-1}(y_\infty)) \geq \eps$.
This means that we may assume there is some sequence $x_k$ such that $y_k = f_k(x_k)$ converges to $y_\infty = f_\infty(x_\infty)$ and $\dist_X(x_k,x_\infty) > \eps$; otherwise we could restrict to a subsequence where $\delta \to 0$ and then we could restrict to a subsequence where $y_k$ converges, since $Y$ is compact. 
We may also assume that $x_k$ converges to a point $\widetilde x$, since $X$ is compact. 
Since $f_k \to f_\infty$ invariably and $x_k \to \widetilde x$, we have 
$y_{k} = f_{k}(x_{k}) \to f_{\infty}(\widetilde x)$.
Since $y_k \to y_\infty$, we have $f_{\infty}(\widetilde x) = y_\infty$, so $\widetilde x = f_\infty{}^{-1}(y_\infty) = x_\infty$, but that contradicts our choice of $x_{k}$ as a sequence bounded away from $x_\infty$.
\end{proof}

\begin{proof}[Proof of Lemma \ref{lemma-invariable-convergence} part \ref{item-composition}]
Let $\delta_\mathrm{f}$, $\delta_\mathrm{g}$ and $K_\mathrm{f}$, $K_\mathrm{g}$ be as in the definition of invariable convergence for $f_k$ and $g_k$ respectively. 
Consider $\eps > 0$ and $k \geq \max(K_\mathrm{f}(\eps),K_\mathrm{g}(\eps))$ and $x \in X_k$ and $x' \in X_{\infty}$ such that $\dist_X(x, x') < \delta_\mathrm{f}(\delta_\mathrm{g}(\eps))$.  Then, 
$ \dist_Y( f_k(x) , f_{\infty}(x') ) < \delta_\mathrm{g}(\eps) $, so 
$ \dist_Z( g_k \circ f_k(x) , g_{\infty} \circ f_{\infty}(x') ) < \eps $.
\end{proof}

\begin{lemma}\label{lemma-uniform-inverse}
Let $f_k \to f_\infty : X \to Y$ be a sequence of homeomorphic embeddings of a compact metric space $X$ in a compact metric space $Y$ converging uniformly to a homeomorphic embedding, and let $y_k \in f_k(X)$. If $y_n \to y_\infty \in Y$, 
Then $f_k{}^{-1}(y_k) \to f_\infty{}^{-1}(y_\infty)$.
\end{lemma}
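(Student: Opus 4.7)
The plan is to obtain this as an essentially immediate consequence of parts \ref{item-fixed-domain} and \ref{item-inverse} of Lemma \ref{lemma-invariable-convergence}, once we check that the hypotheses line up. The point is that the lemma translates ordinary uniform convergence into a statement about convergence of inverses along sequences whose basepoints vary in both index and limit set.

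First I would observe that the target point $y_\infty$ lies in $f_\infty(X)$, so that $f_\infty^{-1}(y_\infty)$ is well defined: since $X$ is compact and $f_\infty$ is continuous, $f_\infty(X)$ is closed in $Y$, and $y_\infty$ is the limit of the sequence $y_k \in f_k(X)$, where $f_k(X)$ converges to $f_\infty(X)$ in Hausdorff distance (this last is itself immediate from uniform convergence: $\dist_H(f_k(X), f_\infty(X)) \le \sup_{x \in X} \dist_Y(f_k(x), f_\infty(x)) \to 0$).

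Next I would apply part \ref{item-fixed-domain} of Lemma \ref{lemma-invariable-convergence} with the fixed domain $X_k = X$: since $f_k \to f_\infty$ uniformly and $f_\infty$, being a homeomorphic embedding, is continuous, the hypotheses are met and we conclude that $f_k \to f_\infty$ invariably. Then part \ref{item-inverse} of the same lemma, applied to these homeomorphic embeddings, gives $f_k^{-1} \to f_\infty^{-1}$ invariably as maps on $f_k(X) \to X$ (with limit domain $f_\infty(X)$).

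Finally I would unpack the definition of invariable convergence against the sequence $y_k$. Given $\eps > 0$, choose $K(\eps)$ and $\delta(\eps)$ from the definition applied to $f_k^{-1}$. Since $y_k \to y_\infty$, there is $K' \geq K(\eps)$ such that $\dist_Y(y_k, y_\infty) < \delta(\eps)$ for all $k \geq K'$, and for such $k$ the invariable convergence yields $\dist_X(f_k^{-1}(y_k), f_\infty^{-1}(y_\infty)) < \eps$. Hence $f_k^{-1}(y_k) \to f_\infty^{-1}(y_\infty)$. There is no real obstacle here beyond matching notation; the whole content of the lemma is that invariable convergence of inverses (part \ref{item-inverse}) is exactly the right tool for handling sequences with moving basepoints, and part \ref{item-fixed-domain} supplies the invariable convergence from ordinary uniform convergence on a fixed compact domain.
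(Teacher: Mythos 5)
Your proposal is correct and follows exactly the paper's own argument: apply part \ref{item-fixed-domain} of Lemma \ref{lemma-invariable-convergence} to get invariable convergence from uniform convergence, then part \ref{item-inverse} to get invariable convergence of the inverses, and unpack the definition against the sequence $y_k$. The extra checks you include (that $y_\infty \in f_\infty(X)$ and the final $\eps$--$\delta$ bookkeeping) are details the paper leaves implicit but do not change the route.
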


\begin{proof}
By Lemma \ref{lemma-invariable-convergence} part \ref{item-fixed-domain}, $f_k \to f_\infty$ invariably, so by part \ref{item-inverse}, $f_k^{-1} \to f_\infty^{-1}$ invariably, so $f_k{}^{-1}(y_k) \to f_\infty{}^{-1}(y_\infty)$.
\end{proof}

\begin{lemma}\label{lemmasupconvergence}
Let $\Omega, X, Y, Z$ be compact metric spaces spaces, and let $f : \Omega \to \hom(X,Y)$ and $g : \Omega \to \hom(Y,Z)$ be continuous in the sup-metric.  
Then, 
$g(\omega)\circ f(\omega)$ and $[f(\omega)]^{-1}$ depend continuously on $\omega$ in the sup-metric.
\end{lemma}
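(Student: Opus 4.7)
The plan is to deduce both statements directly from Lemma \ref{lemma-invariable-convergence}, using the fact that sup-metric continuity on a metric space is characterized sequentially. So it suffices to fix an arbitrary convergent sequence $\omega_k \to \omega_\infty$ in $\Omega$ and show that both $g(\omega_k)\circ f(\omega_k)$ and $[f(\omega_k)]^{-1}$ converge in the sup-metric to the corresponding objects at $\omega_\infty$.

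By the sup-metric continuity of $f$ and $g$, we have $f(\omega_k) \to f(\omega_\infty)$ uniformly on $X$ and $g(\omega_k) \to g(\omega_\infty)$ uniformly on $Y$. Since $f(\omega_\infty)$ and $g(\omega_\infty)$ are homeomorphisms and therefore continuous, part \ref{item-fixed-domain} of Lemma \ref{lemma-invariable-convergence} (applied with the fixed domains $X_k = X$ and $Y_k = Y$) upgrades these to invariable convergence.

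For the composition assertion, I would then invoke part \ref{item-composition} of Lemma \ref{lemma-invariable-convergence} to conclude $g(\omega_k) \circ f(\omega_k) \to g(\omega_\infty) \circ f(\omega_\infty)$ invariably. Since these composites all share the fixed domain $X$, part \ref{item-fixed-domain} in the reverse direction converts invariable convergence back into uniform (i.e.\ sup-metric) convergence, giving the required continuity of $\omega \mapsto g(\omega) \circ f(\omega)$.

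For the inverse assertion, since each $f(\omega_k) : X \to Y$ is a homeomorphic embedding (in fact surjective), part \ref{item-inverse} of Lemma \ref{lemma-invariable-convergence} gives $[f(\omega_k)]^{-1} \to [f(\omega_\infty)]^{-1}$ invariably. These inverses all have the common domain $Y$, so one more application of part \ref{item-fixed-domain} yields uniform convergence, which is continuity in the sup-metric. There is no genuine obstacle here: the whole lemma is essentially a bookkeeping corollary of Lemma \ref{lemma-invariable-convergence}, with the only mild subtlety being to remember to invoke part \ref{item-fixed-domain} in both directions so as to pass between the sup-metric (which is phrased for uniform convergence) and the invariable-convergence framework in which parts \ref{item-inverse} and \ref{item-composition} are stated.
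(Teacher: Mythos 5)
Your proposal is correct and follows essentially the same route as the paper: the paper's proof likewise fixes a sequence $\omega_k \to \omega_\infty$, passes to invariable convergence via Lemma \ref{lemma-invariable-convergence}, and applies its composition and inverse parts. Your version is merely more explicit about invoking part \ref{item-fixed-domain} in both directions to translate between uniform and invariable convergence, which the paper leaves implicit.
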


\begin{proof}
Consider $\omega_k \in \Omega$ such that $\omega_k \to \omega_\infty$. 
By Lemma \ref{lemma-invariable-convergence}, $f(\omega_k) \to f(\omega_\infty)$ and $g(\omega_k) \to g(\omega_\infty)$ converge invariably, so $g(\omega_k)\circ f(\omega_k) \to g(\omega_\infty)\circ f(\omega_\infty)$ and 
$[f(\omega_k)]^{-1} \to [f(\omega_k)]^{-1}$ invariably, so $g(\omega)\circ f(\omega)$ and $[f(\omega)]^{-1}$ are continuous as functions of $\omega$.
\end{proof}

\subsection{A disk with four points on the boundary - $\fdivp$}

Here we construct a parameterization $\fdivp$ of a topological disk that depends continuously on the boundary of the disk and on 4 distinct points on the boundary.

\vbox{
\begin{lemma}\label{lemma-ivpointmap}
Let $S$ be a simple closed curve in $\sphere^2$, and let $C$ be one of the closed regions bounded by $S$, and let $p_{1},p_{2},p_{-1},p_{-2} \in S$ be distinct points counter-clockwise around $C$ as viewed from outside the sphere. 
Then, there is a unique internally conformal homeomorphism $f = \fdivp(S,p_1,p_2,p_{-1},p_{-2}) : \mathbf{D} \to C$ such that 
$f(1) = p_1$, $f(-1) = p_{-1}$, and $f^{-1}(p_{-2}) = -f^{-1}(p_{2})$.
Also, the following holds. 
\begin{enumerate}
\item \label{item-ivpointmapcontinuity}
In the sup-metric, $f$  depends continuously on $S$ in Fréchet distance and $p_{1},p_{2},p_{-1},p_{-2}$. 
\item \label{item-ivpointmapsorth}
For $Q \in \sorth_3$, 
$ \fdivp(Q(S,p_1,p_2,p_{-1},p_{-2})) = Qf  $.
\item \label{item-ivpointmapnegate}
$ \fdivp(-(S,p_1,p_2,p_{-1},p_{-2});z) = -f(\overline{z})  $.
\item \label{item-ivpointmapswap}
$ \fdivp(S,p_{-1},p_{-2},p_1,p_2;z) = f(-{z}) $.
\item \label{item-ivpointmapstereo}
$ \fdivp(e_3^\bot,e_1,e_2,e_{-1},e_{-2})$ 
is stereographic projection through $e_{-3}$. 
\end{enumerate}
\end{lemma}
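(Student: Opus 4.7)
The plan is to reduce to planar complex analysis by composing with a stereographic projection from a point outside $C$, apply Carathéodory's mapping theorem to obtain an internally conformal homeomorphism $g : \mathbf{D} \to C$, and then pin down $f$ as $g \circ \mu$ where $\mu$ is a Möbius automorphism of $\mathbf{D}$ chosen so that the three normalization conditions are satisfied. The orientation-preserving Möbius automorphisms of $\mathbf{D}$ form a three-dimensional Lie group, matching the three real conditions on $f$, so I expect a unique $\mu$.

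For uniqueness, set $q_i = g^{-1}(p_i)$. The first two conditions become $\mu(\pm 1) = q_{\pm 1}$, and the third requires an $\alpha \in \sphere^1 \setminus \{\pm 1\}$ with $\mu(\alpha) = q_2$ and $\mu(-\alpha) = q_{-2}$. Möbius invariance of cross-ratios reduces this to
\[
\left(\frac{1-\alpha}{1+\alpha}\right)^2 = [1,-1;\alpha,-\alpha] = [q_1,q_{-1};q_2,q_{-2}].
\]
The right-hand side is a negative real number because the pairs $\{q_1,q_{-1}\}$ and $\{q_2,q_{-2}\}$ separate each other on $\sphere^1$, and the left-hand side runs bijectively over $(-\infty,0)$ as $\alpha$ ranges over each open semicircle of $\sphere^1 \setminus \{\pm 1\}$ (using that $\tfrac{1-\alpha}{1+\alpha}$ lies on the imaginary axis for $|\alpha|=1$). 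Selecting $\alpha$ on the arc that makes $1,\alpha,-1,-\alpha$ cyclically CCW-ordered so as to match the CCW ordering of $q_1,q_2,q_{-1},q_{-2}$ yields the unique orientation-preserving $\mu$, and hence the unique $f$.

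Continuity (part \ref{item-ivpointmapcontinuity}) follows from Radó's theorem: after fixing a canonical normalization for $g$ (e.g.\ $g(0)$ at a continuously chosen interior point of $C$ with $\partial g(0) \in \mb{R}_{>0}$), $g$ depends continuously on $S$ in the sup-metric, and then $q_i = g^{-1}(p_i)$ varies continuously by Lemma \ref{lemma-uniform-inverse}, so the cross-ratio, $\alpha$, $\mu$, and finally $f = g \circ \mu$ all chain through continuously by Lemma \ref{lemmasupconvergence}. The equivariance parts (\ref{item-ivpointmapsorth}, \ref{item-ivpointmapnegate}, \ref{item-ivpointmapswap}) each follow from uniqueness by checking that $Q \circ f$, $z \mapsto -f(\bar z)$, and $z \mapsto f(-z)$ are internally conformal homeomorphisms satisfying the three normalization conditions for the transformed data; for part \ref{item-ivpointmapnegate} the key is that both the antipodal map on $\sphere^2$ and $z \mapsto \bar z$ on $\mathbf{D}$ are antiholomorphic, so the composition $-f(\bar z)$ is holomorphic. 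Part \ref{item-ivpointmapstereo} is a direct check from the explicit formula $(x,y,z) \mapsto (x+\im y)/(1+z)$ for stereographic projection from $e_{-3}$ to the equatorial plane. The main technical obstacle I anticipate is the continuity argument: arranging a canonical normalization for $g$ that depends continuously on the data as $C$ (and the stereographic chart used to view it) moves on the sphere; once this is in place, the rest is a routine chaining of continuities via the invariable convergence framework developed earlier.
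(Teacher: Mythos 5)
Your proposal is correct and follows essentially the same route as the paper: Carath\'eodory plus Rad\'o for a normalized map $g:\mathbf{D}\to C$, post-composition with the unique M\"obius automorphism of the disk fitting the four boundary points (the paper packages your cross-ratio argument as an explicit formula in Lemma \ref{lemma-ivpointalign}, with $w=\tfrac{1+\sqrt r}{1-\sqrt r}$ playing the role of your $\alpha$), uniqueness of $f$ via the classification of internally conformal disk automorphisms, and the symmetry parts by uniqueness. The normalization issue you flag as the main obstacle is resolved in the paper by arguing sequentially: one fixes a chart with $0\in C_\infty^\circ$ and normalizes $g_k(0)=f_\infty(0)$, $\partial g_k(0)>0$ relative to the limit configuration only, so no globally continuous choice of interior point is ever needed.
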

}

Before proving Lemma \ref{lemma-ivpointmap}, we will define a pair of conformal automorphisms of the unit disk.
Note that the conformal automorphisms of the unit disk are the linear fractional transformations that send the disk to itself.
One automorphism, $\fiiipa$, depends on 3 points on the boundary, and the other, $\fivpa$, depends on 4 points on the boundary.  We will show that these automorphisms depend continuously on the 3 or 4 points. 
The map $\fdivp$ implied by Lemma \ref{lemma-ivpointmap} will use the automorphism with 4 points, which will use the automorphism with 3 points. 

\begin{lemma}\label{lemma-iiipointalign}
Let $u_1,u_\im,u_{-1} \in \mathbf{S}^1$ be distinct points in counter-clockwise order.
Then, there is a unique conformal automorphism $f = \fiiipa(u_1,u_\im,u_{-1})$ of the disk 
such that $f(u_n) = n$ for $n \in \{1,\im,-1\}$. 
Moreover, $f$ in the sup-metric depends continuously on $u_1,u_\im,u_{-1}$. 
\end{lemma}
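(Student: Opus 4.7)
The plan is to realize $f$ explicitly as a Möbius transformation, built via the cross-ratio. For four distinct points $w_1,w_2,w_3,w_4 \in \overline{\mb{C}}$, recall that the cross-ratio
\[ [w_1,w_2;w_3,w_4] = \frac{(w_1-w_3)(w_2-w_4)}{(w_1-w_4)(w_2-w_3)} \]
is invariant under Möbius transformations. I would solve the implicit identity $[f(z),1;\im,-1] = [z,u_1;u_\im,u_{-1}]$ for $f(z)$, yielding a specific rational expression in $z$ and in $u_1,u_\im,u_{-1}$ that defines a Möbius transformation satisfying $f(u_n) = n$ for $n \in \{1,\im,-1\}$. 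Uniqueness among all Möbius transformations is immediate, since any Möbius transformation is determined by its values at three points.

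Next I would verify that $f$ restricts to a conformal automorphism of the closed unit disk. Möbius transformations send generalized circles to generalized circles and act orientation-preservingly on $\overline{\mb{C}}$. Since $f$ sends three points of $\sphere^1$ to three points of $\sphere^1$, it maps $\sphere^1$ bijectively to $\sphere^1$ and hence sends each complementary disk in $\overline{\mb{C}}$ onto one of them. The counter-clockwise hypothesis on both triples means $f$ preserves the induced orientation on $\sphere^1$, which forces $f(\mathbf{D}) = \mathbf{D}$ rather than the unbounded complementary disk. Uniqueness among conformal automorphisms of $\mathbf{D}$ then follows from uniqueness among Möbius transformations.

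For the continuity claim, the explicit cross-ratio solution shows that $f(z)$ is a jointly continuous function of $(z, u_1, u_\im, u_{-1})$ on the set where the cross-ratio denominators do not vanish, and in particular on $\mathbf{D} \times U$ for any compact neighborhood $U$ inside the open configuration space of distinct counter-clockwise triples on $\sphere^1$. Joint continuity on this compact product yields uniform continuity, so small perturbations of the parameter $(u_1,u_\im,u_{-1})$ produce uniformly small perturbations of $f$ on $\mathbf{D}$. This is exactly continuity in the sup-metric.

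The one step deserving some care is the orientation argument showing that $f(\mathbf{D}) = \mathbf{D}$ rather than the complementary disk in $\overline{\mb{C}}$; the rest is a routine application of standard facts about Möbius transformations and cross-ratios, and the cross-ratio formula has the virtue of giving the continuity statement essentially for free.
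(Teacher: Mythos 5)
Your proposal is correct and follows essentially the same route as the paper: the paper writes down the explicit linear fractional transformation (which is exactly what solving your cross-ratio identity produces), checks $f(u_n)=n$, uses the circle-preserving and orientation-preserving properties of Möbius maps to get $f(\mathbf{D})=\mathbf{D}$, derives uniqueness from the fact that a Möbius map fixing three points is the identity, and obtains sup-metric continuity from smoothness of the formula on compact neighborhoods of the parameters. No gaps.
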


\begin{proof}
Let 
\[ f(z) = \frac{(z-u_{-1})(u_\im-u_1)(\im+1)+(z-u_1)(u_\im-u_{-1})(\im-1)}{(z-u_{-1})(u_\im-u_1)(\im+1)-(z-u_1)(u_\im-u_{-1})(\im-1)}. \]
To verify that $f$ has the defining properties, observe that $f(u_1) = 1$, $f(u_\im) = \im$, and $f(u_{-1}) = -1$.  
Since linear fractional transformations send circles to circles, and 3 points determine a circle, and 
$f$ respectively sends $u_1,u_\im,u_{-1}$ to $1,\im,-1$, we have that $f$ sends the unit circle $\sphere^1$ to itself.
Also, since the order of the points around the circle is the same, namely counter-clockwise, and linear fractional transformations are orientation preserving, $f$ sends $\mathbf{D}$ to itself.
Thus, $f$, as given by the formula above, satisfies the defining properties of $\fiiipa(u_1,u_\im,u_{-1})$ in the lemma.

To verify uniqueness, consider another map $f_0$ satisfying the hypotheses of the lemma,
and let $f_1 = f_0 \circ f^{-1}$.
Then, $f_1$ is a linear fractional transformation with fixed values $1$, $\im$, and $-1$, so $f_1$ is the identity map, which means $f_0 = f$.

To show continuity, observe that $\fiiipa$ is smooth as a function of 4 variables, so the restriction of $\fiiipa$ to a compact subset of the domain is always uniformly continuous.  
We can always find a compact neighborhood of a point $(u_1,u_\im,u_{-1};z)$ that is a product with $\mathbf{D}$, so $\fiiipa$ is continuous in the sup-metric over $\mathbf{D}$ as a function of 3 variables.
\end{proof}

\begin{lemma}\label{lemma-ivpointalign}
Let $u_1,u_2,u_{-1},u_{-2} \in \mathbf{S}^1$ be distinct points in counter-clockwise order.
Then, there is a conformal automorphism $f = \fivpa(u_1,u_2,u_{-1},u_{-2})$ of the disk 
such that $f(u_1) = 1$,  $f(u_{-1}) = -1$, and  $f(u_{-2}) = -f(u_{2})$. 
Moreover, $f$ in the sup-metric depends continuously on $u_1,u_2,u_{-1},u_{-2}$. 
\end{lemma}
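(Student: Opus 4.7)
The plan is to build $\fivpa$ by composing $\fiiipa$ with a one-parameter family of disk automorphisms that fix $\pm 1$, adjusting the parameter so the two remaining points become antipodal.

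First I would invoke Lemma~\ref{lemma-iiipointalign} to get $h = \fiiipa(u_1,u_2,u_{-1})$, which sends $u_1,u_2,u_{-1}$ to $1,\im,-1$. Because the four points are counter-clockwise around $\sphere^1$ and disk automorphisms preserve orientation, the image $w_0 = h(u_{-2})$ lies on the open lower semicircle, so $w_0 = e^{-\im\beta_0}$ for some $\beta_0 \in (0,\pi)$. The map $h$, and hence the point $w_0$, depends continuously on $(u_1,u_2,u_{-1},u_{-2})$ by Lemma~\ref{lemma-iiipointalign} and Lemma~\ref{lemmasupconvergence}.

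Next, I would introduce the one-parameter family of disk automorphisms $g_t(z) = (z+t)/(1+tz)$ for $t \in (-1,1)$. These are the Möbius transformations that fix both $1$ and $-1$ and preserve $\mathbf{D}$. A direct computation gives $g_t(\im) = \bigl(2t + \im(1-t^2)\bigr)/(1+t^2)$, so writing $g_t(\im) = e^{\im\alpha(t)}$ with $\alpha(t)\in(0,\pi)$ yields $\alpha(t) = \pi/2 - 2\arctan(t)$, which is a smooth strictly decreasing bijection from $(-1,1)$ onto $(0,\pi)$. The same calculation with $w_0$ in place of $\im$ shows that $g_t(w_0) = e^{-\im\beta(t)}$ with $\beta(t)\in(0,\pi)$, and that $\beta$ is a smooth strictly decreasing bijection $(-1,1)\to (0,\pi)$ (as $t\to 1^-$ the transformation squeezes every non-$(-1)$ boundary point into $1$, and symmetrically as $t\to -1^+$). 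Hence $\alpha + \beta$ is strictly decreasing and continuous on $(-1,1)$, with limits $2\pi$ at $t=-1$ and $0$ at $t=1$, so by the intermediate value theorem there exists a unique $t^* \in (-1,1)$ with $\alpha(t^*) + \beta(t^*) = \pi$, i.e.\ $g_{t^*}(\im) = -g_{t^*}(w_0)$. Setting $f \dfeq g_{t^*} \circ h$, we get a composition of conformal automorphisms of $\mathbf{D}$, and by construction $f(u_1)=1$, $f(u_{-1})=-1$, and $f(u_{-2}) = -f(u_2)$.

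For continuity, the value $t^*$ depends on $w_0$ only through the equation $\alpha(t^*)+\beta(t^*;w_0) = \pi$. Since $\beta$ is jointly continuous in $(t,w_0)$ and $\partial_t(\alpha+\beta)$ is strictly negative, the implicit function theorem (or directly the strict monotonicity plus continuity plus IVT) gives continuous dependence $t^* = t^*(w_0)$. Since $w_0$ depends continuously on the four input points (as noted above), and $g_t$ depends continuously on $t$ in the sup-metric, Lemma~\ref{lemmasupconvergence} yields that $f$ depends continuously on $(u_1,u_2,u_{-1},u_{-2})$ in the sup-metric, completing the proof.

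The main obstacle is the intermediate value / monotonicity step: one must verify that $\alpha(t)+\beta(t)$ strictly decreases from $2\pi$ to $0$, which is what guarantees both existence and continuous single-valued selection of $t^*$. Everything else reduces to composing known continuous constructions.
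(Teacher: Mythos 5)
Your proposal is correct, but it takes a genuinely different route from the paper. The paper constructs $f$ in one shot from an explicit formula: it computes the cross-ratio $r$ of $(u_1,u_{-1};u_2,u_{-2})$, observes $r<0$ from the cyclic order, sets $w=(1+\sqrt{r})/(1-\sqrt{r})$ with the branch of $\sqrt{r}$ on the positive imaginary axis, defines $f$ as the linear fractional transformation sending $u_1,u_2,u_{-1}$ to $1,w,-1$, and then verifies $f(u_{-2})=-w$ by a direct algebraic manipulation of cross-ratio identities; continuity (indeed smoothness) is then read off the closed formula, and uniqueness is proved by a fixed-point argument. You instead normalize three points via $\fiiipa$ and then shoot along the one-parameter hyperbolic subgroup $g_t(z)=(z+t)/(1+tz)$ fixing $\pm1$, using monotonicity of $\alpha(t)+\beta(t)$ and the intermediate value theorem to locate the unique $t^*$ making the images of $u_2$ and $u_{-2}$ antipodal. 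Your monotonicity claims check out (on the boundary $g_t$ acts by $\tan(\theta'/2)=\tfrac{1-t}{1+t}\tan(\theta/2)$, so both $\alpha$ and $\beta$ are strictly decreasing bijections onto $(0,\pi)$), and the continuity argument via strict monotonicity plus Lemmas \ref{lemma-iiipointalign} and \ref{lemmasupconvergence} is sound. What the paper's approach buys is an explicit smooth formula (hence continuity for free) and an immediate uniqueness proof; what yours buys is a more conceptual existence argument that avoids the cross-ratio computation, at the cost of having to verify the monotonicity/IVT step and, if uniqueness of the automorphism were needed, one further remark that every disk automorphism fixing $1$ and $-1$ lies in the family $\{g_t\}$.
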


\begin{proof}
Let 
\begin{align*}
r &= \frac{(u_1-u_2)(u_{-1}-u_{-2})}{(u_1-u_{-2})(u_{-1}-u_2)}. \\
\intertext{Note that $r$ is the cross-ratio of $(u_{1},u_{-1};u_{2},u_{-2})$.
Since these 4 values appear on a common circle, $r$ is real, and because the pair $u_{1},u_{-1}$ and $u_{2},u_{-2}$ appear in alternating order around the circle, we have $r< 0$. Let}
w &= \frac{1+\sqrt{r}}{1-\sqrt{r}} \quad \text{where $\sqrt{r}$ is in the upper half-plane}. \\
\intertext{Observe that $\sqrt{r}$ is on the upper ray of the imaginary axis by our choice of branch of the square root, so $w$ is on the upper half of the unit circle.
Let $f$ be the linear fractional transformation that sends $u_1,u_2,u_{-1}$ respectively to $1,w,{-1}$, namely }
f(z) &= [\fiiipa(1,w,-1)^{-1}\circ \fiiipa(u_1,u_2,u_{-1})](z) \\
&= \frac{(z-u_{-1})(u_2-u_1)(w+1)+(z-u_1)(u_2-u_{-1})(w-1)}{(z-u_{-1})(u_2-u_1)(w+1)-(z-u_1)(u_2-u_{-1})(w-1)}. 
\end{align*}

To verify that $f$ has the defining properties, we can immediately observe that $f(u_1) = 1$, $f(u_2) = w$, and $f(u_{-1}) = -1$.  
Since $u_1,u_2,u_{-1}$ are sent to points on the unit circle, and $f$ is a linear fractional transformation, we have that $f(\sphere^1) = \sphere^1$.
Furthermore, since $u_1,u_2,u_{-1}$ are sent to points in the same order around the unit circle, namely counter-clockwise, we have that $f(\mathbf{D}) = \mathbf{D}$. 
It only remains for us to verify that $f(u_{-2}) = -w$ by computation. 
We have 
$\frac{w-1}{w+1} = r^{\nicefrac{1}{2}}$ 
and
\begin{align*} 
(u_1-u_2)(u_{-1}-u_{-2}) &= r^{\nicefrac{1}{2}} \sqrt{(u_1-u_2)(u_{-1}-u_{-2})(u_1-u_{-2})(u_{-1}-u_2)}, \\
(u_1-u_{-2})(u_{-1}-u_{2}) &= r^{-\nicefrac{1}{2}} \sqrt{(u_1-u_2)(u_{-1}-u_{-2})(u_1-u_{-2})(u_{-1}-u_2)}, 
\end{align*}
so 
\begin{align*} 
f(u_{-2}) 
& = \frac{(u_1-u_2)(u_{-1}-u_{-2})(w+1)+(u_1-u_{-2})(u_{-1}-u_2)(w-1)}{(u_1-u_2)(u_{-1}-u_{-2})(w+1)-(u_1-u_{-2})(u_{-1}-u_2)(w-1)}  \\
& = \frac{r^{\nicefrac{1}{2}}(w+1)+r^{\nicefrac{-1}{2}}(w-1)}{r^{\nicefrac{1}{2}}(w+1)-r^{\nicefrac{-1}{2}}(w-1)} \cdot \frac{\sqrt{(u_1-u_2)(u_{-1}-u_{-2})(u_1-u_{-2})(u_{-1}-u_2)}}{\sqrt{(u_1-u_2)(u_{-1}-u_{-2})(u_1-u_{-2})(u_{-1}-u_2)}}  \\
& = \frac{(w-1)+(w+1)}{(w-1)-(w+1)}  \\
& = -w.
\end{align*}
Thus, $f$, as given by the formula above, satisfies the defining properties of $\fivpa(u_1,u_2,u_{-1},u_{-2})$ in the lemma.

To verify uniqueness, consider another map $f_0$ satisfying the hypotheses of the lemma,
and let $f_1 = f_0 \circ f^{-1}$.
Then $f_1$ is a linear fractional transformation that fixes $1$ and $-1$ and sends $\sphere^1$ to itself, so $f_1$ must also send $\overline{\mb{R}}$ to itself.
Also, $f_1$ sends the line through $f(u_2),0,f(u_{-2})$ to the line though $f_0(u_2),0,f_0(u_{-2})$, 
and these lines are distinct from $\overline{\mb{R}}$, so $f_1$ must send $\{0,\infty\}$ to $\{0,\infty\}$,
and since $f_1$ sends $\mathbf{D}$ to itself, $0$ must be another fixed point of $f_1$, so $f_1$ is the identity map. 
Thus, $f_0 = f$. 
 
To show continuity, observe that $\fivpa$ is smooth as a function of 5 variables, so $\fivpa$ is locally uniformly continuous, so $\fivpa$ is continuous in the sup-metric over $\mathbf{D}$ as a function of 4 variables.
\end{proof}

\begin{proof}[Proof of Lemma \ref{lemma-ivpointmap}]
By the Riemann and Carathéodory mapping theorems, there exists an internally conformal homeomorphism $g$ from $\mathbf{D}$ to $C$.  Fix $g$, and let 
\[ f = g \circ \left(\fivpa(u_1,u_2,u_{-1},u_{-2})\right)^{-1}\]
where $u_n = g^{-1}(p_n)$.
Observe that $f$ satisfies the defining properties of $\fdivp(S,p_1,p_2,p_{-1},p_{-2})$ in the lemma.

To verify uniqueness, consider another map $\widetilde f$ satisfying the hypotheses of the lemma, and let 
$h = \widetilde f \circ f^{-1}$.
Then, $h : \mathbf{D} \to \mathbf{D}$ is an internally conformal homeomorphism such that  
$h(1) = 1$ and $h(-1) = -1$.
By the Schwarz-Pick theorem, $h$ is a linear fractional transformation. 
Also, $h$ sends the line through $f(p_2),0,f(p_{-2})$ to the line though $\widetilde f(p_2),0,\widetilde f(p_{-2})$, 
and these lines are distinct from $\overline{\mb{R}}$, so $0$ must be another fixed point of $h$, so $h$ is the identity map. 
Thus, $\widetilde f = f$.

To show part \ref{item-ivpointmapcontinuity}, continuity, 
consider sequences $S_k,p_{1,k},p_{2,k},p_{-1,k},p_{-2,k}$ that respectively converge to $S_\infty,p_{1,\infty},p_{2,\infty},p_{-1,\infty},p_{-2,\infty}$ in the space of objects where the hypotheses of the lemma are satisfied, and define $C_k$ analogously. 
Let $f_k = \fdivp(S_k,p_{1,k},p_{2,k},p_{-1,k},p_{-2,k})$. 
Let us identify $\sphere^2$ with $\overline{\mb{C}}$ preserving angle and orientation in such a way that $0 \in C_\infty^\circ$, $\infty \not\in C_\infty$, $\partial f_\infty(0) > 0$.  Then, $f_k$ is holomorphic on the interior of $\mathbf{D}$.

Let $g_k$ be the internally conformal homeomorphism from $\mathbf{D}$ to $C_k$ such that $g_k(0) = f_\infty(0)$ and $\partial g_k(0) > 0$.
Since $S_k \to S_\infty$ in Fréchet distance, by Radó's theorem \cite[Theorem 2.11]{pommerenke1992boundary}, $g_k \to g_\infty$ uniformly.
Let $u_{n,k} = g_k^{-1}(p_{n,k})$.
Then, $u_{n,k} \to u_{n,\infty}$ by Lemma \ref{lemma-uniform-inverse},
so by Lemmas \ref{lemma-ivpointalign} and \ref{lemma-invariable-convergence}, 
\[ f_k = g_k \circ \left(\fivpa(u_{1,k},u_{2,k},u_{-1,k},u_{-2,k})\right)^{-1} \to f_\infty \quad \text{uniformly.} \]
Thus, $\fdivp$ is continuous in the sup-metric over $\mathbf{D}$ as a function of the 5 variables $S,p_1,p_2,p_{-1},p_{-2}$, so part \ref{item-ivpointmapcontinuity} holds.

The map $ Q f$ is an internally conformal homeomorphism from $\mathbf{D}$ to $Q C$ that sends $1,-1$ respectively to $Q p_1, Q p_{-1}$, and  $[Qf]^{-1}(Q p_{-2}) = - [Qf]^{-1}(Q p_{2}) $, 
and since 
$\fdivp(Q(S,\dots,p_{-2}))$ uniquely satisfies these conditions, part \ref{item-ivpointmapsorth} holds.

The map $h = [z \mapsto -\fdivp(S,\dots,p_{-2};\overline{z})] = [z \mapsto -z] \circ \fdivp(S,\dots,p_{-2}) \circ [z \mapsto \overline{z}]$ is a homeomorphism from $\mathbf{D}$ to $-C$, and both $[z \mapsto \overline{z}]$ and $[z \mapsto -z]$ reverse orientation, and all three components of the map preserve angles on the interior, so the map $h$ is internally conformal. 
Also, the map $h$ sends $1,-1$ respectively to $-p_{1}$ and $-p_{-1}$, and 
$h^{-1}(-p_{-2}) = \overline{f^{-1}(p_{-2})} = -\overline{f^{-1}(p_{2})} = -h^{-1}(-p_{2})$. 
Since $\fdivp(-(S,\dots,p_{-2}))$ uniquely satisfies these conditions,
part \ref{item-ivpointmapnegate} holds.

The map $h = [z \mapsto f(-z)]$ is an internally conformal homeomorphism from $\mathbf{D}$ to $C$ that sends $1,-1$ respectively to $p_{-1}$ and $p_1$, and  $h^{-1}(p_{2}) = - h^{-1}(p_{-2})$.
Since $\fdivp(S,p_{-1},p_{-2},p_1,p_2)$ uniquely satisfies these conditions,
part \ref{item-ivpointmapswap} holds.

Finally, stereographic projection though the point $e_{-3}$ from $\mathbf{D}$ to $\sphere^2$ is conformal and  sends $\sphere^1,1,\im,-1,-\im$ respectively to $e_3^\bot,e_1,e_2,e_{-1},e_{-2}$,
so part \ref{item-ivpointmapstereo} holds.  
\end{proof}

\subsection{A disk with two points on its null area boundary - $\fdiip$}

We call a curve a \df{null area} curve when the curve itself has area 0.  
Here we construct a parameterization $\fdiip$ of a topological disk that depends continuously on the boundary of the disk and on 2 distinct points on the boundary, provided that that boundary is a null area curve.

\begin{lemma}\label{lemma-iipointmap}
Let $C$ be a closed region in $\sphere^2$ bounded by a null area simple closed curve $S$, and let $p_{1},p_{-1} \in S$ be distinct points on the boundary of $C$.
Then, there is a unique internally conformal homeomorphism $f = \fdiip(C,p_1,p_{-1}) : \mathbf{D} \to C$ such that $f(1) = p_1$, $f(-1) = p_{-1}$, and the regions $f(\mathfrak{i}\mathbf{H}\cap\mathbf{D})$ and $f(-\mathfrak{i}\mathbf{H}\cap\mathbf{D})$ have equal area.
Also, the following hold.
\begin{enumerate}
\item \label{item-iipointmapcontinuity}
In the sup-metric, $f$ depends continuously on $C$ in boundary Fréchet distance and $p_1,p_{-1}$. 
\item \label{item-iipointmapsorth}
For $Q \in \sorth_3$,
$\fdiip(Q(C,p_1,p_{-1})) = Q f.$ 
\item \label{item-iipointmapnegate}
$\fdiip(-(C,p_1,p_{-1});z) = -f(\overline{z})$.
\item \label{item-iipointmapswap}
$\fdiip(C,p_{-1},p_{1};z) = f(-z)$. 
\item \label{item-iipointmapstereo}
$\fdiip(C_3,e_1,e_{-1})$
with $C_3 = \{x \in \sphere^2:\langle e_3,x\rangle \geq 0\}$
is stereographic projection though $e_{-3}$. 
\end{enumerate}
\end{lemma}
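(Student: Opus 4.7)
My plan is to mirror the proof of Lemma \ref{lemma-ivpointmap}. By the Carathéodory mapping theorem there exists an internally conformal homeomorphism $\mathbf{D} \to C$; by pre-composing with a Möbius automorphism of $\mathbf{D}$, one may arrange that $\pm 1$ map to $p_{\pm 1}$, leaving exactly one remaining degree of freedom within the one-parameter family of Möbius automorphisms fixing both $1$ and $-1$, which the area condition will pin down uniquely. Uniqueness will follow from a Schwarz--Pick argument combined with strict monotonicity, and the continuity statement will follow from Radó's theorem together with Lemmas \ref{lemma-invariable-convergence} and \ref{lemma-uniform-inverse}, as in Lemma \ref{lemma-ivpointmap}.

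To parameterize the one-parameter family explicitly, I would use the Cayley transform $m(z) = (1+z)/(1-z)$, which sends $\mathbf{D}$ onto the right half-plane and $1,-1$ to $\infty,0$. The automorphisms fixing $\pm 1$ then become positive dilations, so I write them as $\phi_\lambda(z) = m^{-1}(\lambda m(z))$ for $\lambda > 0$. The key point, distinguishing this case from the four-point case, is that although $\phi_\lambda$ fixes the real diameter setwise, it does \emph{not} preserve the vertical diameter. Concretely, $m$ sends $\mathfrak{i}\mathbf{H}\cap\mathbf{D}$ to the right half of the closed unit disk (the vertical diameter of $\mathbf{D}$ maps to the right half of $\mathbf{S}^1$, and the left semicircle of $\mathbf{S}^1$ maps to the imaginary segment $[-\im,\im]$), so
\[
m(\phi_\lambda(\mathfrak{i}\mathbf{H}\cap\mathbf{D})) = \{z : |z| \le \lambda,\ \mathrm{Re}\, z \ge 0\}.
\]
This family is strictly monotonically increasing in $\lambda$, collapsing to $\{0\}$ as $\lambda \to 0^+$ (so $\phi_\lambda(\mathfrak{i}\mathbf{H}\cap\mathbf{D}) \to \{-1\}$) and exhausting the right half-plane as $\lambda \to \infty$ (so $\phi_\lambda(\mathfrak{i}\mathbf{H}\cap\mathbf{D}) \to \mathbf{D}$). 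By monotone convergence, for any fixed $g : \mathbf{D} \to C$ internally conformal with $g(\pm 1) = p_{\pm 1}$, the quantity $\area((g \circ \phi_\lambda)(\mathfrak{i}\mathbf{H}\cap\mathbf{D}))$ varies continuously and strictly monotonically, taking all values in $(0, \area(C))$; the intermediate value theorem then supplies a unique $\lambda^*$ achieving $\area(C)/2$, and I set $f = g \circ \phi_{\lambda^*}$. Uniqueness is immediate since any competing $\widetilde f$ satisfies $g^{-1}\circ\widetilde f = \phi_\mu$ for some $\mu$, and strict monotonicity forces $\mu = \lambda^*$.

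For the continuity property (part \ref{item-iipointmapcontinuity}), I would follow the end of the proof of Lemma \ref{lemma-ivpointmap}: identify $\sphere^2$ with $\overline{\mb{C}}$ so that $f_\infty(0) \in C_\infty^\circ$ and $\partial f_\infty(0) > 0$, let $g_k : \mathbf{D} \to C_k$ be the Radó-normalized internally conformal homeomorphism with $g_k(0) = f_\infty(0)$ and $\partial g_k(0) > 0$ so that Radó's theorem gives $g_k \to g_\infty$ uniformly, apply Lemma \ref{lemma-uniform-inverse} to see that the preimages $v_{n,k} = g_k^{-1}(p_{n,k})$ converge, use $\fiiipa$ with a canonical third point on $\mathbf{S}^1$ to produce a continuously varying automorphism $\beta_k$ sending $v_{\pm 1,k}$ to $\pm 1$, and obtain $h_k = g_k \circ \beta_k^{-1} \to h_\infty$ uniformly via Lemma \ref{lemma-invariable-convergence}. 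It then remains to show that the $\lambda_k$ selected by the area condition converge to $\lambda_\infty$. I expect the hard part to be joint continuity of $(k, \lambda) \mapsto \area(h_k(\phi_\lambda(\mathfrak{i}\mathbf{H}\cap\mathbf{D})))$: the boundary of this region in $C_k$ is an arc interior to $C_k^\circ$ (controlled by uniform convergence of $h_k$ on compact subsets of $\mathbf{D}^\circ$) together with an arc of $\partial C_k$, and the null-area hypothesis on $\partial C_k$ and $\partial C_\infty$ together with the Fréchet convergence $\partial C_k \to \partial C_\infty$ is precisely what is needed to invoke the paper's area-convergence principle (Lemma \ref{lemma-area-convergence}) and conclude.

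Parts \ref{item-iipointmapsorth}--\ref{item-iipointmapstereo} all follow from uniqueness by constructing explicit candidate maps and verifying the defining properties: $Qf$ for part \ref{item-iipointmapsorth}; $[z \mapsto -f(\overline z)]$ for part \ref{item-iipointmapnegate} (internally conformal because the two orientation-reversing factors---complex conjugation on $\mathbf{D}$ and antipodal reflection on $\sphere^2$---compose with $f$ to an orientation-preserving map, and compatible with the area condition because $\mathfrak{i}\mathbf{H}\cap\mathbf{D}$ is invariant under conjugation and the antipodal map is an isometry); $[z \mapsto f(-z)]$ for part \ref{item-iipointmapswap} (which interchanges the two half-disks, whose $f$-images have equal area by hypothesis); and, for part \ref{item-iipointmapstereo}, stereographic projection through $e_{-3}$, which is conformal, sends $\pm 1$ to $e_{\pm 1}$, and has the required equal-area property by the reflection symmetry of $C_3$ across the plane spanned by $e_1,e_3$.
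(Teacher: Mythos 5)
Your proposal is correct and follows essentially the same route as the paper: the paper also transports the problem to a half-plane by a Möbius map sending $1,-1$ to the two boundary points at infinity and zero (its $\fswivel$ is your Cayley transform in a different normalization), pins down the remaining dilation parameter by strict monotonicity of the area of the image half-disk, proves uniqueness via Schwarz--Pick plus the nesting/monotonicity argument, and derives parts \ref{item-iipointmapsorth}--\ref{item-iipointmapstereo} from uniqueness exactly as you do. The only organizational difference is in part \ref{item-iipointmapcontinuity}: rather than tracking the dilation parameter $\lambda_k$ directly through a Rad\'o-normalized family, the paper encodes $f_k$ as $\fdivp(S_k,p_{1,k},f_k(\im),p_{-1,k},f_k(-\im))$ and runs a compactness-and-contradiction argument on the points $f_k(\pm\im)$, but both versions bottom out in the same appeal to Lemma \ref{lemma-area-convergence} and the null-area hypothesis that you correctly identify as the crux.
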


When $C$ is an open region, 
let $\fdiip(C,p_1,p_{-1}) = \fdiip(\overline{C},p_1,p_{-1})$ 
where $\overline{C}$ is the closure of $C$. 

Since area is used in the definition of the map $\fdiip$, we will need the following lemma that the area of the region bounded by a null area curve varies continuously.

\begin{lemma}\label{lemma-area-convergence}
For $k \in \{1,\dots,\infty\}$, let $C_k$ be a closed region in $\sphere^2$ bounded by a simple closed curve.  If $C_k \to C_\infty$ in boundary Fréchet distance, and the boundary of $C_\infty$ has null area, then the area of $C_k$ converges to the area of $C_\infty$. 
\end{lemma}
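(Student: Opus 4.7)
The plan is to show $\area(C_k \triangle C_\infty) \to 0$, which immediately yields the lemma since $|\area(C_k)-\area(C_\infty)| \leq \area(C_k \triangle C_\infty)$. I would establish this by a sandwich: for each $\delta>0$, the symmetric difference is eventually contained in the open $\delta$-neighborhood $N_\delta$ of $\partial C_\infty$, and $\area(N_\delta)\to 0$ as $\delta \to 0$.

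For the first part, observe that boundary Fréchet convergence implies Hausdorff convergence of the boundaries, so for $k$ large we have $\partial C_k \subset N_{\delta/2}$. Pick any $x \notin N_\delta$. By the Jordan curve theorem in $\sphere^2$, $x$ lies in one of the two open components of $\sphere^2 \setminus \partial C_\infty$. I would then connect $x$ within that component to a fixed reference point (a point chosen consistently inside, respectively outside, $C_\infty$) by a path that stays at distance exceeding $\delta/2$ from $\partial C_\infty$; such a path exists by a standard compactness and cover argument applied to paths inside the open component. Since this path avoids $\partial C_k$, it lies entirely in a single component of $\sphere^2 \setminus \partial C_k$, so $x$ and the reference point lie on the same side of $\partial C_k$. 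Under the implicit consistent labelling (an interior base point of $C_\infty$ is eventually an interior point of $C_k$), this forces $x \in C_k \iff x \in C_\infty$, i.e.\ $x \notin C_k \triangle C_\infty$.

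For the second part, $\partial C_\infty$ is a closed subset of $\sphere^2$ with $\area(\partial C_\infty)=0$, and $\bigcap_{\delta>0}N_\delta = \partial C_\infty$. Since $\sphere^2$ has finite area, continuity of the Lebesgue measure from above gives $\area(N_\delta)\to 0$ as $\delta \to 0$. Combining the two parts: given $\eps>0$, pick $\delta$ with $\area(N_\delta)<\eps$, then for all sufficiently large $k$, $\area(C_k \triangle C_\infty)\leq \area(N_\delta)<\eps$.

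The main obstacle is the first step. The subtle point is that boundary Fréchet convergence alone does not distinguish a topological disk from its complement in $\sphere^2$, so one has to rely on a consistent labelling of the regions (e.g.\ a fixed interior base point of $C_\infty$ that also lies interior to each $C_k$ for $k$ large) in order to guarantee the two sides of $\partial C_k$ and $\partial C_\infty$ are correctly matched. Once this matching is in place, the Jordan curve theorem plus the path-connectedness argument sketched above makes the comparison of indicator functions routine, and the measure-theoretic conclusion follows.
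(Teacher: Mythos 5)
Your overall strategy coincides with the paper's: show that the symmetric difference $C_k\,\triangle\,C_\infty$ is eventually contained in the $\delta$-neighborhood $N_\delta$ of $\partial C_\infty$, then use the null-area hypothesis to make $\area(N_\delta)$ small (your appeal to continuity of measure from above is equivalent to the paper's open-cover step). The one substantive difference is how consistent classification of points far from $\partial C_\infty$ is established: the paper homotopes $\partial C_\infty$ to $\partial C_k$ inside $\sphere^2\setminus\{p_0,p_1\}$ and uses that the curve remains essential in that annulus, whereas you keep the curves fixed and move the test point along a path avoiding $\partial C_k$. Both are legitimate, and you are right --- and more explicit than the paper --- that a consistent labelling of the two complementary regions must be assumed, since boundary Fréchet convergence alone cannot distinguish $C_k$ from the closure of its complement.

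There is, however, a genuine error in one step: a point $x$ at distance greater than $\delta$ from $\partial C_\infty$ need \emph{not} be joinable to the base point by a path staying at distance greater than $\delta/2$ from $\partial C_\infty$. Take $C_\infty$ to be a dumbbell, two large disks joined by a neck of width much smaller than $\delta$, with $x$ centered in one disk and the base point in the other; every connecting path inside $C_\infty^\circ$ must traverse the neck and hence come much closer than $\delta/2$ to the boundary. What is true, and suffices, is that the compact set $\{x \in C_\infty^\circ : \dist(x,\partial C_\infty)\geq\delta\}$ admits a single constant $\delta''>0$ (depending on $\delta$ and $C_\infty$, possibly far smaller than $\delta/2$) such that each of its points joins the base point by a path at distance at least $\delta''$ from $\partial C_\infty$; this follows from a finite-cover argument, since each point of the open component joins the base point at \emph{some} positive distance, and nearby points can be linked by short geodesics. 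One must then take $k$ large enough that $\partial C_k\subset N_{\delta''}$, not merely $N_{\delta/2}$, and likewise for the exterior component. With that repair --- which is exactly the ``compactness and cover argument'' you gesture at, but applied to get a uniform clearance rather than the clearance $\delta/2$ you assert --- the proof goes through.
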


\begin{warning}
Although $\fdiip$ is defined when the boundary of $C$ has positive area, $\fdiip$ is only continuous over the space of regions with null area boundary.  A crucial issue here is that the conclusion of Lemma \ref{lemma-area-convergence} does not hold in some cases where the boundary of $C_\infty$ has positive area, such as for the region bounded by an Osgood curve \cite{osgood1903jordan}.
\end{warning}

\begin{proof}[Proof of Lemma \ref{lemma-area-convergence}]
We claim that if $\dist(C_k,C_\infty) < \eps$ for $\eps$ sufficiently small, then the symmetric difference $\Delta_k = (C_k \setminus C_\infty)\cup(C_\infty \setminus C_k)$ is contained in $\partial C_\infty \oplus \eps$.
We may assume $\eps$ is small enough that there exists points $p_0,p_1$ such that 
$(p_0 \oplus \eps) \cap C_\infty = \emptyset$ and
$(p_1 \oplus \eps) \subset C_\infty$.
Since $\dist(C_k,C_\infty) < \eps$ there is a map $f : \partial C_\infty \to \partial C_k$ such that $\|f(x)-x\| < \eps$, so $x$ is always closer to $f(x)$ than to $p_0$ or $p_1$.
Therefore, there is a homotopy from $\partial C_k$ to $\partial C_k$ that stays within $\sphere^2\setminus\{p_0,p_1\}$, which implies that $p_0 \not\in C_k$ and $p_1 \in C_k$.
Since this holds for any such pair $p_0,p_1$, the claim holds.

Consider $\eps_1 > 0$.
Since $\partial C_\infty$ has area 0, there is an open cover $U$ of $\partial C_\infty$ that has area at most $\eps_1$.
Since $\sphere^2\setminus U$ and $\partial C_\infty$ are compact and disjoint, they are bounded apart by some $\eps_2 > 0$ depending on $\eps_1$. 
Since $C_k \to C_\infty$, we have for all $k$ sufficiently large that $\dist(C_k,C_\infty) < \eps_2$, 
so by the claim above, $\Delta_k \subseteq \partial C_\infty \oplus \eps_2$, 
so $\Delta_k \subseteq U$, so $\Delta_k$ has area at most $\eps_1$.
Thus, $\mathrm{area}(\Delta_k) \to 0$ as $k\to \infty$, which implies that $\mathrm{area}(C_k) \to \mathrm{area}(C_\infty)$.
\end{proof}

\begin{proof}[Proof of Lemma \ref{lemma-iipointmap}]
To make the problem simpler, we first use a linear fractional transformation $\fswivel$ that sends the unit disk $\mathbf{D}$ to the upper half-plane $\mathbf{H}$.  Such a transformation is given by 
\[ \fswivel(z) = -\im \frac{z-1}{z+1}.\]
We also have $\fswivel(1) = 0$, $\fswivel(-1) = \infty$, and $\fswivel(\im\mathbf{H}\cap\mathbf{D}) = (\mathbf{D}\cap\mathbf{H})$.

By Lemma \ref{lemma-ivpointmap}, there exists an internally conformal homeomorphism $g$ from $\mathbf{D}$ to $C$ such that $g(1) = p_1$ and $g(-1)=p_{-1}$.
Fix $g$, and let $R(x) = [g \circ \fswivel^{-1}](x\mathbf{D}\cap\mathbf{H})$,
and let $a(x)$ be the area of $R(x)$, and let $a_\infty$ be the area of $C$.
Then, $a$ is a strictly increasing function from $a(0) = 0$ to $a(\infty) = a_\infty$, so there is a unique $r>0$ such that $a(r) = \tfrac{a_\infty}{2}$.
Let 
\[ f = g \circ \fswivel^{-1} \circ r\fswivel. \]
Observe that $\fswivel(\im\mathbf{H})=\mathbf{D}$, so 
$f(\im \mathbf{H}\cap\mathbf{D}) = [g \circ \fswivel^{-1}](r\mathbf{D}\cap\mathbf{H}) = R(r)$,
which occupies half the area of $C$.
Hence, $f$ satisfies the defining properties of $\fdiip(C,p_1,p_{-1})$ in the lemma.

To check uniqueness, consider another map $f_0$ satisfying the hypotheses of the lemma.
Then, $h = \fswivel \circ f_0 \circ f^{-1} \circ \fswivel^{-1}$ is a conformal automorphism of the upper half-plane with fixed points $0,\infty$, so $h$ acts by scaling by a positive factor $s$.
If we had $s \neq 1$, then the sets $f_0(\mathfrak{i}\mathbf{H}\cap\mathbf{D})^{\circ}$ and $f(\mathfrak{i}\mathbf{H}\cap\mathbf{D})= R(r)^{\circ}$ would be properly nested open sets, which would contradict that these sets have the same area.  Thus, $h$ is the identity, so $f_0 = f$.

To check part \ref{item-iipointmapcontinuity}, continuity, consider sequences $C_k,p_{1,k},p_{-1,k}$ converging to $C_\infty,p_{1,\infty},p_{-1,\infty}$ under the hypotheses of the lemma.
Let $f_k = \fdiip(C_k,p_{1,k},p_{-1,k})$, and let $R_k = f_k(\im\mathbf{H}\cap\mathbf{D})$. 
Suppose for the sake of contradiction that $f_k(\im)$ does not converge to $f_\infty(\im)$.
Then, there would be some $\eps>0$ such that $|f_k(\im) - f_\infty(\im)| > \eps$, and since $\mathbf{S}^2$ is compact, we could assume $f_k(\im)$ converges to a point $p_2 \neq f_\infty(\im)$; otherwise restrict to a subsequence bounded away from $f_\infty(\im)$ and then restrict to a convergent subsequence. 

We may also assume that $f_k(-\im)$ converges to a point $p_{-2}$.
Let $\widetilde f = \fdivp(p_{\infty,1},p_{\infty,-1},p_2,p_{-2})$ and $\widetilde R = \widetilde f(\im\mathbf{H}\cap\mathbf{D})$.
By uniqueness in Lemma \ref{lemma-ivpointmap}, we have 
$f_k = \fdivp(p_{k,1},p_{k,-1},f_k(\im),f_k(-\im))$,
so by continuity in Lemma \ref{lemma-ivpointmap}, we have 
$f_k \to \widetilde f$ uniformly.
Hence, $R_k \to \widetilde R$ in boundary Fréchet distance. 
Since the boundaries of $\widetilde R$ and $C_\infty$ have null area, by Lemma \ref{lemma-area-convergence}, we have $\area(R_k) \to \area(\widetilde R)$ and $\area(C_k) = 2\area(R_k) \to \area(C_\infty) = 2\area(R_\infty)$.
Hence, $\area(\widetilde R) = \area(R_\infty)$, so by the uniqueness of $\fdiip$, we have $\widetilde f = f_\infty$, so $p_2 = f_\infty(\im)$, which contradicts our choice of subsequence.
Thus, $f_k(\im)$ must converge to $f_\infty(\im)$.
Similarly, $f_k(-\im)$ must converge to $f_\infty(-\im)$.
Therefore, by Lemma \ref{lemma-ivpointmap}, $f_k$ converges to $f_\infty$ uniformly, so part \ref{item-iipointmapcontinuity} holds.

The map $ Q f$ is an internally conformal homeomorphism from $\mathbf{D}$ to $Q C$ that sends $1,-1$ respectively to $Q p_1, Q p_{-1}$, and $Qf(\im\mathbf{H}\cap\mathbf{D})$ occupies half the area of $QC$. 
Since $\fdiip(Q(S,\dots,p_{-2}))$ uniquely satisfies these conditions, part \ref{item-iipointmapsorth} holds.

The map $h = [z \mapsto -\fdiip(C,p_1,p_{-1};\overline{z})]$ is an internally conformal homeomorphism from $\mathbf{D}$ to $-C$ that sends $1,-1$ respectively to $-p_{1}$ and $-p_{-1}$ and $h(\im \mathbf{H}\cap\mathbf{D})$ occupies half the area of $-C$.
Since $\fdiip(-(S,p_1,p_{-2}))$ uniquely satisfies these conditions,
part \ref{item-iipointmapnegate} holds.

The map $h = [z \mapsto \fdiip(C,p_1,p_{-1};-z)]$ is an internally conformal homeomorphism from $\mathbf{D}$ to $C$ that sends $1,-1$ respectively to $p_{-1}$ and $p_1$ and $h(\im \mathbf{H}\cap\mathbf{D})$ occupies half the area of $C$. 
Since $\fdiip(C,p_{-1},p_{1}))$ uniquely satisfies these conditions,
part \ref{item-iipointmapswap} holds.

Finally, stereographic projection though the point $e_{-3}$ from $\mathbf{D}$ to $\sphere^2$ sends 
$\mathbf{D},1,-1$ respectively to $C_3,e_1,e_{-1}$ and sends $\im \mathbf{H}\cap\mathbf{D}$ to a lune occupying half the area of $C_3$, 
so part \ref{item-iipointmapstereo} holds.  
\end{proof}

\subsection{The sphere - $\fsivp$ and  $\fsiip$}

Here we construct two self-homeomorphisms of the sphere, $\fsivp$ and  $\fsiip$.
The map $\fsivp$ depends continuously on a curve and 4 distinct points on the curve, 
while $\fsiip$ depends continuously on a null area curve and 2 distinct points on the curve. 
In each case, the equator is mapped to the given curve.
These maps are constructed by parameterizing the region on either side of the given curve respectively using $\fdivp$ and $\fdiip$, 
and then modifying the parameterizations so that they agree along their common boundary, the curve itself.
Since this is done the same way in both cases, we construct a map $\fstitch$ that modifies a pair parameterizations in such a way.

Given embeddings $f_i : \mathbf{D} \to \sphere^2$ with $p_1 = f_1(1)=f_2(1)$,  $p_{-1} = f_1(-1)=f_2(-1)$,  and $S = f_1(\sphere^1) = f_2(\sphere^1)$,
let 
\[f = \fstitch(f_1,f_2) : \overline{\mb{C}} \to \sphere^2\]
be the homeomorphism defined as follows.
We will assume that both $f_1$ and $f_2$ are orientation preserving;
otherwise replace $f_i$ with $f_i \circ [z \mapsto \overline{z}]$. 
We first define a function $f_0$, which will be the restriction of $f$ to the unit circle in the complex plane.
We will define $f_0$ in terms of its inverse. 
Let 
\[ 
f_0^{-1}: S \to \mathbf{S}^1, 
\quad f_0^{-1}(x) 
= \sqrt{\frac{f_1^{-1}(x)}{f_2^{-1}(x)}}  
\] 
where we choose the branch of $\sqrt{\cdot}$ so that $f_0^{-1}(p_1) = 1$, and define $f_0^{-1}$ from $(f_0^{-1})^2$ on the rest of $S$ by analytic continuation. 
Note that the arc $A$ of $S$ extending in the counter-clockwise direction from $p_1$ around the boundary of $C_1$ as viewed from outside the sphere is sent to the upper half-plane of $\mb{C}$.  Note also that this arc $A$ extends in the clockwise direction from $p$ around the boundary of $C_2$, since $C_2$ is the cell on the opposite side of the sphere from $C_1$. Let 
\[
\fstitch(f_1,f_2;z) = \begin{cases}
f_1(0) & z = 0 \\ 
f_1 \circ |z|f_1^{-1} \circ f_0(\nicefrac{z}{|z|}) & 0 < |z| < 1 \\
f_0(z) & |z| = 1 \\
f_2 \circ \frac{1}{|z|}f_2^{-1} \circ f_0(\nicefrac{z}{|z|}) & 1 < |z| < \infty \\
f_2(0) & z = \infty \\ 
\end{cases}
\]

Note that we use the metric on $\overline{\mb{C}}$ induced by stereographic projection, which makes $\overline{\mb{C}}$ homeomorphic to the 2-sphere.

\begin{lemma}\label{lemma-stitch}
The map $\fstitch$ satisfies the following.
\begin{enumerate}
\item \label{item-stitchcontinuity}
In the sup-metric, $\fstitch(f_1,f_2)$ depends continuously on $f_1,f_2$ in the sup-metric.
\item \label{item-stitchsorth}
For $Q \in \sorth_3$, $\fstitch(Q(f_1,f_2)) = Q\fstitch(f_1,f_2)$.
\item \label{item-stitchnegate}
$\fstitch(-(f_1,f_2);z) = -\fstitch(f_1,f_2;\overline{z})$.
\item \label{item-stitchswap}
$\fstitch(f_2,f_1;z) = \fstitch(f_1,f_2;\nicefrac{1}{z})$.
\end{enumerate}
\end{lemma}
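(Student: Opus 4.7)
The plan is to first verify that $\fstitch(f_1,f_2)$ is a well-defined homeomorphism $\overline{\mb{C}}\to\sphere^2$, and then address each of the four listed properties. The key preliminary step is to show that the boundary map $f_0:\sphere^1\to S$ implicitly defined by the formula for $f_0^{-1}$ is a homeomorphism. As $x$ traverses $S$ once counterclockwise around $\partial C_1$, the point $f_1^{-1}(x)$ winds once counterclockwise around $\sphere^1$ while $f_2^{-1}(x)$ winds once clockwise (since $C_2$ lies on the opposite side of $S$), so the ratio $(f_0^{-1})^2 = f_1^{-1}/f_2^{-1}$ has winding number $2$ about $0$, and the branch of $\sqrt{\cdot}$ pinned by $f_0^{-1}(p_1)=1$ gives a continuous bijection winding once around $\sphere^1$. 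With $f_0$ in hand, the piecewise formula agrees on $|z|=1$ (both the $|z|<1$ and $|z|>1$ pieces reduce to $f_0$ there) and extends continuously to $0$ and $\infty$ by letting $|z|\to 0$ or $\infty$, so the stated formula defines a bona fide homeomorphism.

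For part \ref{item-stitchcontinuity}, continuity in $(f_1,f_2)$ is where the real work lies. I would apply Lemma \ref{lemmasupconvergence} to conclude that $f_i^{-1}$ depends continuously on $f_i$ in the sup-metric, yielding continuous dependence of the ratio $f_1^{-1}/f_2^{-1}$ on $(f_1,f_2)$. The pinned square root depends continuously on $(f_1,f_2)$ because the winding number of $(f_0^{-1})^2$ is stable under small perturbation and the starting value at $p_1$ is fixed, so $f_0^{-1}$ and hence $f_0$ vary continuously as $(f_1,f_2)$ varies. Substituting this into the explicit piecewise formula and invoking Lemma \ref{lemmasupconvergence} again for compositions and inverses (and Lemma \ref{lemma-invariable-convergence} part \ref{item-inverse} for $f_1^{-1},f_2^{-1}$) then yields continuity of $\fstitch(f_1,f_2)$ in the sup-metric on $\overline{\mb{C}}$.

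Parts \ref{item-stitchsorth}--\ref{item-stitchswap} follow by direct inspection of the formula. For \ref{item-stitchsorth}, the identity $(Q\circ f_i)^{-1}=f_i^{-1}\circ Q^{-1}$ together with $Q p_1 = Q f_i(1)$ shows the ratio and the pinning condition transform compatibly, so $Q$ pulls out of every composition. For \ref{item-stitchnegate}, the relation $(-f_i)^{-1}(x)=f_i^{-1}(-x)$ combined with complex conjugation on the input produces the sign flip, and one checks the pinning is preserved at $-p_1$ via $\overline{1}=1$. For \ref{item-stitchswap}, swapping $f_1$ and $f_2$ replaces the defining ratio by its reciprocal and hence $f_0^{-1}$ by $1/f_0^{-1}$ (with the same pinning at $p_1$), while the inversion $z\mapsto 1/z$ swaps the $|z|<1$ and $|z|>1$ regions precisely so that the two pieces of the formula interchange. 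The main obstacle will be verifying continuity of the branch choice of $\sqrt{\cdot}$ as $(f_1,f_2)$ varies, since analytic continuation is a global procedure and a naive pointwise estimate could in principle miss a branch flip; however, stability of the winding number together with the pinning at $p_1$ and the invariable convergence framework should allow this to be handled by a compactness argument on $S$, reducing the claim to uniform continuity of the analytic continuation along $S$ given uniformly convergent input.
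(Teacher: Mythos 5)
Your overall architecture matches the paper's: well-definedness of $f_0$ via the degree-$2$ winding of $f_1^{-1}/f_2^{-1}$, continuity via the invariable-convergence machinery of Lemma \ref{lemma-invariable-convergence}, and parts \ref{item-stitchsorth}--\ref{item-stitchswap} by tracking how the defining ratio and the pinning at $p_1$ transform. The substantive difference is in the one step you yourself flag as the main obstacle, the continuity of the branch of $\sqrt{\cdot}$. The paper avoids any global analytic-continuation or winding-number argument by a local structural observation: for $x\in S$ the points $f_1^{-1}(x)$ and $f_2^{-1}(x)$ either coincide at $\pm 1$ or lie on opposite sides of the real axis, so the pinned square root is simply the restriction of a single continuous branch of $(x_1,x_2)\mapsto\sqrt{x_1/x_2}$ on the explicit product domains $(\sphere^1\cap\mathbf{H})\times(\sphere^1\cap-\mathbf{H})$ and its mirror, and invariable convergence of $f_{0,k}^{-1}$ follows at once from the composition lemma. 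Your route (stability of the winding number plus pinning plus compactness) can be completed --- e.g.\ pull everything back to the fixed circle via $\psi_k=f_{0,k}^{-1}\circ f_{1,k}|_{\sphere^1}$ and run a connectedness argument on $\psi_k\overline{\psi_\infty}$, whose square tends uniformly to $1$ --- but as written it is a sketch of the crux rather than a proof of it, and the paper's device is what you would want to import to close it cleanly. Two smaller points: Lemma \ref{lemmasupconvergence} does not apply to $f_i^{-1}$ since the $f_i$ are embeddings with varying image, so the citation you need there is only Lemma \ref{lemma-invariable-convergence} part \ref{item-inverse} (which you do also name); and your proposal says nothing about why the radial interpolation $z\mapsto f_1\circ|z|f_1^{-1}\circ f_0(z/|z|)$ converges uniformly near $z=0$ and $z=\infty$, which the paper settles with the chordal-metric estimate $|g_{1,k}(z)-g_{1,\infty}(z)|<\tfrac{2|z|}{1+|z|^2}\eps$. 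Neither is fatal, but both need to be filled in.
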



\begin{proof}
Consider embeddings $f_{i,k}$ of $\mathbf{D}$ in $\sphere^2$ such that $(f_{1,k},f_{2,k})$ is in the domain of $\fstitch$ for $k \in \{1,\dots,\infty\}$,  
and such that $f_{i,k} \to f_{i,\infty}$ uniformly. 
By Lemma \ref{lemma-invariable-convergence}, 
$f_{i,k} \to f_{i,\infty}$ invariably, so $f_{i,k}^{-1} \to f_{i,\infty}^{-1}$ invariably.

Let $f_{0,k}$ be as in the definition of $\fstitch$ for the two maps $f_{1,k},f_{2,k}$. 
The branch $f_{\mathrm{sqrt}}$ of the multifunction $[(x_1,x_2) \mapsto \sqrt{\nicefrac{x_1}{x_2}}]$ on $(\mathbf{S}^1 \cap \mathbf{H})\times (\mathbf{S}^1 \cap -\mathbf{H})$ that sends $(1,1)$ to $1$ is continuous, and so is the branch on $(\mathbf{S}^1 \cap -\mathbf{H})\times (\mathbf{S}^1 \cap \mathbf{H})$.
Also, for $x \in S_k$, $f_{1,k}^{-1}(x)$ and $f_{2,k}^{-1}(x)$ always either coincide at $1$ or $-1$, or are on opposite sides of the real line, so the branch $f_{\mathrm{sqrt}}$ is continuous on the range of $f_{1,k}^{-1}\times f_{2,k}^{-1}$ over $S_k$.  Hence, by Lemma \ref{lemma-invariable-convergence},
$f_{0,k}^{-1} \to f_{0,\infty}^{-1}$ invariably, so $f_{0,k} \to f_{0,\infty}$, and $f_{1,k}^{-1} \circ f_{0,k} \to f_{1,\infty}^{-1} \circ f_{0,\infty}$, and $f_{2,k}^{-1} \circ f_{0,k} \to f_{2,\infty}^{-1} \circ f_{0,\infty}$ invariably as well.

Let $g_{1,k}(z) = |z|f_1^{-1} \circ f_0(\nicefrac{z}{|z|})$ for $z \in \mathbf{D}\setminus 0$ and $g_{1,k}(0) = 0$, 
and define $g_{2,k}$ analogously.
We claim that $g_{i,k} \to g_{i,\infty}$ invariably. 
Consider $\eps > 0$, and let $K(\eps) > 0$ be such that for all $k \geq K(\eps)$ and all $z \in \mathbf{D}$, we have $|[f_{1,k}^{-1} \circ f_{0,k}](z) - [f_{1,\infty}^{-1} \circ f_{0,\infty}](z)| < \eps$ as in the definition of uniform convergence.
Then, we have 
$|g_{1,k}(z)-g_{1,\infty}(z)| < \tfrac{2|z|}{1+|z|^2} \eps < \eps $
in the metric on $\overline{\mb{C}}$ induced by stereographic projection 
for all $k \geq K(\eps)$, so $g_{1,k} \to g_{1,\infty}$ uniformly.
Therefore, by Lemma \ref{lemma-invariable-convergence}, $g_{1,k} \to g_{1,\infty}$ invariably, and similarly 
$g_{2,k} \to g_{2,\infty}$ invariably as claimed,
which implies that
$\fstitch(f_{1,k},f_{2,k}) \to \fstitch(f_{1,\infty},f_{2,\infty})$ invariably.  
Thus, $\fstitch(f_{1,k},f_{2,k}) \to \fstitch(f_{1,\infty},f_{2,\infty})$ uniformly,
so part \ref{item-stitchcontinuity} holds.




Let $f_0$ and $\widetilde f_0$ be as in the definition of $\fstitch$ for the pairs $(f_1,f_2)$ and $Q(f_1,f_2)$.
The map $[Qf_i]^{-1}$ sends $Qx$ to $f_i^{-1}(x)$, so 
on $QS$, 
the map $\widetilde f_0{}^{-1}$
sends $Qx$ to the branch of 
$\sqrt{f_1^{-1}(x)/f_2^{-1}(x)}$ that sends $Qp_1$ to $1$.
Hence, $\widetilde f_0 = Qf_0$.
In the case where $0 < |z| < 1$, we have 
$\fstitch(Q(f_1,f_2);z) 
= Q f_1 |z|f_1^{-1} Q^{-1} \widetilde f_0(z) 
= Q f_1 |z|f_1^{-1} f_0(z) 
= Q\fstitch(f_1,f_2;z) $,
and similarly in the other cases.
Therefore, $\fstitch(Q(f_1,f_2)) = Q\fstitch(f_1,f_2)$,
so part \ref{item-stitchsorth} holds.

Now let $f_0$ and $\widetilde f_0$ be as in the definition of $\fstitch$ for the pairs $(f_1,f_2)$ and $-(f_1,f_2)$.
Since the antipodal map $[x \mapsto -x]$ reverses orientation on the sphere, 
the map 
$\widetilde f_0{}^{-1}$ sends $-x$ to the branch of 
$\sqrt{\overline{f_1^{-1}(x)}/\overline{f_2^{-1}(x)}} = \overline{\sqrt{f_1^{-1}(x)/f_2^{-1}(x)}}$ that sends $-p_1$ to $1$.
Hence, $\widetilde f_0(z) = -f_0(\overline{z})$.
In the case where $0 < |z| < 1$, we have 
$\fstitch(-(f_1,f_2);z) 
= [- f_1 |z|f_1^{-1} \circ - \widetilde f_0](z) 
= - f_1 |z|f_1^{-1} f_0(\overline{z}) 
= -\fstitch(f_1,f_2;\overline{z}) $,
and similarly in the other cases.
Therefore, $\fstitch(-(f_1,f_2);z) = -\fstitch(f_1,f_2;\overline{z})$,
so part \ref{item-stitchnegate} holds.

For $x \in S$, we have
\begin{align*}
[\fstitch(f_2,f_1)]^{-1}(x)  
&= \sqrt{\frac{f_2^{-1}(x)}{f_1^{-1}(x)}} \\
&= \frac{1}{\sqrt{\frac{f_1^{-1}(x)}{f_2^{-1}(x)}}} \\
&= [[z\mapsto\nicefrac{1}{z}]\circ [\fstitch(f_1,f_2)]^{-1}](x) \\ 
&= [\fstitch(f_1,f_2)\circ [z\mapsto\nicefrac{1}{z}]]^{-1}(x),
\end{align*}
where the branch of $\sqrt{\cdot}$ is always chosen so that $p_1$ is sent to $1$. 
Thus, $\fstitch(f_2,f_1;z) = \fstitch(f_1,f_2;\nicefrac1z)$ on $z \in \sphere^1$, which means 
part \ref{item-stitchswap} holds on $z \in \sphere^1$.
Consider $z \in \overline{\mb{C}}$, and let $x = \fstitch(f_2,f_1;\nicefrac{z}{|z|}) \in S$. 
In the case where $0 < |z| < 1$, we have $1 < |\nicefrac{1}{z}| < \infty$, so
$ 
\fstitch(f_1,f_2;\nicefrac{1}{z}) = 
f_2 \circ |z|f_2^{-1}(x) = \fstitch(f_2,f_1;z)
$.
Also, $\fstitch(f_1,f_2;\nicefrac{1}{0}) = f_2(0) = \fstitch(f_2,f_1;0)$,
so part \ref{item-stitchswap} holds on $\mathbf{D}$, and similarly in the case where $1 < |z| \leq \infty$, 
so part \ref{item-stitchswap} holds on all of $\overline{\mb{C}}$.
\end{proof}

For a simple closed curve $S \subset \sphere^2$ and 4 distinct points $p_1,p_2,p_{-1},p_{-2}$ appearing in that order around the curve, 
let
\[ \fsivp(S,p_1,p_2,p_{-1},p_{-2}) 
= \fstitch(\fdivp(S,p_1,p_2,p_{-1},p_{-2}),\fdivp(S,p_1,p_{-2},p_{-1},p_{2})) \]

\begin{lemma} \label{lemma-sivpointmap}
The map $f = \fsivp(S,p_1,p_2,p_{-1},p_{-2})$ satisfies the following.
\begin{enumerate}
\item \label{item-sivpointmapcontinuity}
$f$ in the sup-metric depends continuously on $S$ in Fréchet distance and on $p_1,\dots, p_{-2}$.
\item \label{item-sivpointmaporth}
For $Q \in \orth_3$, 
$ \fsivp(Q(S,p_1,p_2,p_{-1},p_{-2}))
= Qf$.
\item \label{item-sivpointmapswap}
$\fsivp(S,p_{-1},p_{-2},p_1,p_2;z) 
= f(-z)$.
\item \label{item-sivpointmapstereo}
$\fsivp(e_3^\bot,e_1,e_2,e_{-1},e_{-2})$ 
is stereographic projection though $e_{-3}$. 
\end{enumerate}
\end{lemma}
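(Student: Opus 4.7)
The plan is to derive each property of $\fsivp$ from the corresponding properties of its two building blocks: $\fdivp$ (Lemma \ref{lemma-ivpointmap}) and $\fstitch$ (Lemma \ref{lemma-stitch}). Write $\fsivp(S, p_1, p_2, p_{-1}, p_{-2}) = \fstitch(f_1, f_2)$ where $f_1 = \fdivp(S, p_1, p_2, p_{-1}, p_{-2})$ and $f_2 = \fdivp(S, p_1, p_{-2}, p_{-1}, p_2)$. The two quadruples are cyclic reversals of each other, so $f_1$ and $f_2$ parameterize the two complementary closed regions bounded by $S$ with $f_i(\pm 1) = p_{\pm 1}$ and $f_i(\sphere^1) = S$, meeting the hypotheses of $\fstitch$.

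For part \ref{item-sivpointmapcontinuity}, I would chain the continuity of $\fdivp$ in Fréchet distance and its point arguments (Lemma \ref{lemma-ivpointmap} part \ref{item-ivpointmapcontinuity}) with the continuity of $\fstitch$ in its embedding arguments (Lemma \ref{lemma-stitch} part \ref{item-stitchcontinuity}), invoking Lemma \ref{lemmasupconvergence} to conclude continuity of the composed map in the sup-metric. For part \ref{item-sivpointmapstereo}, substituting the stereographic identity (Lemma \ref{lemma-ivpointmap} part \ref{item-ivpointmapstereo}) into both $\fdivp$ factors makes each component a restriction of stereographic projection through $e_{-3}$ onto one hemisphere, and $\fstitch$ reassembles them into the global stereographic projection on $\overline{\mb{C}}$.

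For part \ref{item-sivpointmaporth}, I would split $Q \in \orth_3$ into cases based on $\det Q$. When $Q \in \sorth_3$, successively apply the $\sorth_3$-equivariance of $\fdivp$ (part \ref{item-ivpointmapsorth}) inside each $\fdivp$ factor, then of $\fstitch$ (part \ref{item-stitchsorth}) to pull $Q$ outside. When $\det Q = -1$, write $Q = (-I) R$ with $R \in \sorth_3$; first use the $\sorth_3$-equivariance for $R$, then the negate properties (Lemmas \ref{lemma-ivpointmap} part \ref{item-ivpointmapnegate} and \ref{lemma-stitch} part \ref{item-stitchnegate}) to absorb the $-I$ factor. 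The complex conjugations that appear in both negate steps must be tracked through $\fstitch$'s convention of precomposing orientation-reversing inputs with $z \mapsto \overline{z}$, and their net effect produces $Qf$.

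For part \ref{item-sivpointmapswap}, I would use the swap property of $\fdivp$ (part \ref{item-ivpointmapswap}) to identify the two $\fdivp$ components of $\fsivp(S, p_{-1}, p_{-2}, p_1, p_2)$ as $f_1 \circ (-\id)$ and $f_2 \circ (-\id)$, and then verify from the piecewise definition of $\fstitch$ that $\fstitch(f_1 \circ (-\id), f_2 \circ (-\id); z) = \fstitch(f_1, f_2; -z)$. The key computation is that the gluing function $f_0^{-1}(x) = \sqrt{f_1^{-1}(x)/f_2^{-1}(x)}$ retains its formula under this precomposition, but its branch normalization moves from $p_1 \mapsto 1$ to $p_{-1} \mapsto 1$, effectively replacing $f_0$ with $f_0 \circ (-\id)$; the scalar factors $|z|$ and $|z|^{-1}$ commute with $-\id$, so the piecewise formula transforms cleanly. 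The main technical obstacle will be keeping track of these branch choices in $\fstitch$'s gluing function consistently — both here and in the orientation-reversing subcase of part \ref{item-sivpointmaporth} — where the square root's branch cut interacts with complex conjugation.
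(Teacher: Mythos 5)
Your proposal is correct and follows essentially the same route as the paper: decompose $\fsivp$ as $\fstitch$ applied to the two $\fdivp$ parameterizations, then push each property through Lemmas \ref{lemma-ivpointmap} and \ref{lemma-stitch}, splitting part \ref{item-sivpointmaporth} into the $\sorth_3$ and orientation-reversing cases exactly as the paper does. Your extra care with the branch of $\sqrt{f_1^{-1}/f_2^{-1}}$ in part \ref{item-sivpointmapswap} correctly fills in a step the paper leaves implicit (the second $\fdivp$ factor in part \ref{item-sivpointmapstereo} is stereographic projection through $e_3$, converted by the inversion inside $\fstitch$, but your gloss amounts to the same computation).
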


\begin{proof}
Since $f_1 =\fdivp(S,p_1,p_2,p_{-1},p_{-2})$ and $f_2 = \fdivp(S,p_1,p_{-2},p_{-1},p_{2})$ in the sup-metric depend continuously on $S,p_1,p_2,p_{-1},p_{-2}$ by Lemma \ref{lemma-ivpointmap}, and $f$ in the sup-metric depends continuously on $f_1$ and $f_2$, 
part \ref{item-sivpointmapcontinuity} holds. 

By Lemmas \ref{lemma-ivpointmap} and \ref{lemma-stitch}, 
we have the following. 
In the case where $Q \in \sorth_3$ we have 
\begin{align*}
\fsivp (Q(S,p_1,p_2,p_{-1},p_{-2})) 
&= \fstitch(\fdivp(Q(S,p_1,p_2,p_{-1},p_{-2})),\fdivp(Q(S,p_1,p_{-2},p_{-1},p_{2}))) \\
&= \fstitch(Q\fdivp(S,p_1,p_2,p_{-1},p_{-2}),Q\fdivp(S,p_1,p_{-2},p_{-1},p_{2})) \\
&= Q\fstitch(\fdivp(S,p_1,p_2,p_{-1},p_{-2}),\fdivp(S,p_1,p_{-2},p_{-1},p_{2})) \\
&= Qf.
\end{align*}
In the case where $Q \in -\sorth_3$ we have 
\begin{align*}
\fsivp (Q(S,p_1,p_2,p_{-1},p_{-2});z) 
&= \fstitch(\fdivp(Q(S,p_1,p_2,p_{-1},p_{-2})),\fdivp(Q(S,p_1,p_{-2},p_{-1},p_{2}));z) \\
&= \fstitch(Q\fdivp(S,p_1,p_2,p_{-1},p_{-2}),Q\fdivp(S,p_1,p_{-2},p_{-1},p_{2});\overline{z}) \\
&= Q\fstitch(\fdivp(S,p_1,p_2,p_{-1},p_{-2}),\fdivp(S,p_1,p_{-2},p_{-1},p_{2});z) \\
&= Qf.
\end{align*}
Hence, part \ref{item-sivpointmaporth} holds.
\begin{align*}
\fsivp(S,p_{-1},p_{-2},p_1,p_2;z)
&= \fstitch(\fdivp(S,p_{-1},p_{-2},p_{1},p_{2}),\fdivp(S,p_{-1},p_{2},p_{1},p_{-2};z)) \\ 
&= \fstitch(\fdivp(S,p_{1},p_{2},p_{-1},p_{-2}),\fdivp(S,p_{1},p_{-2},p_{-1},p_{2};-z)) \\ 
&= f(-z).
\end{align*}
Hence, part \ref{item-sivpointmapswap} holds.

The map $f_1 = \fdivp(e_3^\bot,e_1,e_2,e_{-1},e_{-2})$ is stereographic projection though $e_{-3}$,
and by part \ref{item-sivpointmaporth}, the map $f_{2} = \fdivp(e_3^\bot,e_1,e_{-2},e_{-1},e_{2})$ is stereographic projection though $e_{3}$, so the map $[z \mapsto f_{2}(\nicefrac1z)]$ is the stereographic projection on $\overline{\mb{C}\setminus\mathbf{D}}$ though $e_{-3}$.
Therefore, the map $\sqrt{f_1^{-1}/f_{2}^{-1}}$ is the stereographic projection on $\sphere^1$ though $e_{-3}$.
Thus, $\fsivp(e_3^\bot,e_1,e_2,e_{-1},e_{-2}) = \fstitch(f_1,f_2)$ is stereographic projection though $e_{-3}$, so part \ref{item-sivpointmapstereo} holds. 
\end{proof}

For a region $C$ bounded by a null area simple closed curve, and distinct points $p_1,p_{-1}$ on the curve, 
let
\[ \fsiip(C,p_1,p_{-1}) 
= \fstitch(\fdiip(C,p_1,p_{-1}),\fdiip({\sphere^2\setminus C},p_1,p_{-1})) \]

\begin{lemma} \label{lemma-siipointmap}
The map $f = \fsiip(C,p_1,p_{-1})$ satisfies the following.
\begin{enumerate}
\item \label{item-siipointmapcontinuity}
$f$ in the sup-metric depends continuously on $C$ in boundary Fréchet distance and on $p_1,p_{-1}$.
\item \label{item-siipointmaporth}
For $Q \in \orth_3$, 
$ \fsiip(Q(C,p_1,p_{-1}))
= Qf$.
\item \label{item-siipointmapswap}
$\fsiip({\sphere^2\setminus C},p_{-1},p_1;z) 
= f(-\nicefrac1z)$.
\item \label{item-siipointmapstereo}
$\fsiip(C_3,e_1,e_{-1})$ 
with $C_3 = \{x \in \sphere^2: \langle e_3, x\rangle \geq 0\}$ 
is stereographic projection though $e_{-3}$. 
\end{enumerate}
\end{lemma}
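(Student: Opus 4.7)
The plan is to prove each part by mirroring the corresponding part of the proof of Lemma \ref{lemma-sivpointmap}, substituting $\fdiip$ for $\fdivp$ and handling the fact that the second argument of $\fstitch$ in the definition of $\fsiip$ is the complementary region $\sphere^2 \setminus C$ rather than a permutation of boundary points.

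For part \ref{item-siipointmapcontinuity}, I would first observe that $C$ and $\overline{\sphere^2 \setminus C}$ share the same boundary simple closed curve, so convergence of $C_k$ in boundary Fréchet distance is equivalent to that of $\overline{\sphere^2 \setminus C_k}$, and the null area hypothesis on $\partial C_\infty$ transfers automatically. Lemma \ref{lemma-iipointmap} part \ref{item-iipointmapcontinuity} then gives uniform convergence of both $\fdiip(C_k,p_{1,k},p_{-1,k})$ and $\fdiip(\sphere^2 \setminus C_k,p_{1,k},p_{-1,k})$, and Lemma \ref{lemma-stitch} part \ref{item-stitchcontinuity} delivers continuity of the stitched $\fsiip$.

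For part \ref{item-siipointmaporth}, I would split into $Q \in \sorth_3$ and $Q \in -\sorth_3$, exactly as in the proof of Lemma \ref{lemma-sivpointmap} part \ref{item-sivpointmaporth}, the key observation being $Q(\sphere^2\setminus C) = \sphere^2 \setminus QC$ so that both arguments of $\fstitch$ transform compatibly. The $\sorth_3$ case invokes Lemma \ref{lemma-iipointmap} part \ref{item-iipointmapsorth} and Lemma \ref{lemma-stitch} part \ref{item-stitchsorth} directly, and the $-\sorth_3$ case combines these with the negate properties \ref{item-iipointmapnegate} and \ref{item-stitchnegate}. For part \ref{item-siipointmapswap}, I would start from
\[
\fsiip(\sphere^2\setminus C, p_{-1}, p_1; z) = \fstitch\bigl(\fdiip(\sphere^2\setminus C, p_{-1}, p_1), \fdiip(C, p_{-1}, p_1); z\bigr),
\]
apply the swap for $\fdiip$ (Lemma \ref{lemma-iipointmap} part \ref{item-iipointmapswap}) to rewrite each inner $\fdiip$ so that the point order matches that in the definition of $f = \fsiip(C,p_1,p_{-1})$, and then apply the swap for $\fstitch$ (Lemma \ref{lemma-stitch} part \ref{item-stitchswap}, which contributes the $1/z$) to arrive at $f(-1/z)$. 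As in the proof of Lemma \ref{lemma-sivpointmap} part \ref{item-sivpointmapswap}, I would also use the fact that simultaneous precomposition of both stitch arguments by $[w \mapsto -w]$ equals postcomposition of the output argument by $[z \mapsto -z]$; since this is not among the listed properties of $\fstitch$, I would justify it directly from the stitch formulas and the branch-cut definition of the boundary map $f_0$.

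For part \ref{item-siipointmapstereo}, I would identify $\fdiip(C_3,e_1,e_{-1})$ as stereographic projection through $e_{-3}$ restricted to $\mathbf{D}$ via Lemma \ref{lemma-iipointmap} part \ref{item-iipointmapstereo}, and identify $\fdiip(\sphere^2\setminus C_3,e_1,e_{-1})$ as its image under the rotation $Q_1 \in \sorth_3$ by $\pi$ about the $e_1$-axis (which sends $C_3$ to $\sphere^2\setminus C_3$ while fixing $e_1,e_{-1}$) using part \ref{item-siipointmaporth}. A direct check along the lines of the proof of Lemma \ref{lemma-sivpointmap} part \ref{item-sivpointmapstereo} then shows that $f_0 = \id_{\sphere^1}$ and that the stitch formulas on $0<|z|<1$ and $1<|z|<\infty$ agree with the standard stereographic formula on those regions, so the stitched map is stereographic projection through $e_{-3}$ on $\overline{\mb{C}}$. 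The main technical obstacle I anticipate is part \ref{item-siipointmapswap}, since chaining the swaps through the branch-cut construction of $f_0$ requires careful bookkeeping of arguments, and the implicit invariance of $\fstitch$ under simultaneous precomposition by $[w \mapsto -w]$ must be verified directly.
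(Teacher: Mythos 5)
Your proposal is correct and follows essentially the same route as the paper: continuity of $\fsiip$ from Lemma \ref{lemma-iipointmap} part \ref{item-iipointmapcontinuity} plus Lemma \ref{lemma-stitch} part \ref{item-stitchcontinuity}, the swap identity via chaining the $\fdiip$ swap with the $\fstitch$ swap, and parts \ref{item-siipointmaporth} and \ref{item-siipointmapstereo} by the same argument as for $\fsivp$ (your only slip is citing part \ref{item-siipointmaporth} where Lemma \ref{lemma-iipointmap} part \ref{item-iipointmapsorth} is meant). You are in fact slightly more careful than the paper, which silently uses the fact that precomposing both $\fstitch$ arguments with $[w \mapsto -w]$ negates the input variable; your plan to verify this from the stitch formulas and the branch choice for $f_0$ is a genuine (and correct) addition.
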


\begin{proof}
Since $f_1 =\fdiip(C,p_1,p_{-1})$ and $f_2 = \fdiip({\sphere^2\setminus C},p_1,p_{-1}))$ in the sup-metric depend continuously on $C,p_1,p_{-1}$ by Lemma \ref{lemma-iipointmap}, and $f$ in the sup-metric depends continuously on $f_1$ and $f_2$ by Lemma \ref{lemma-stitch}, 
part \ref{item-sivpointmapcontinuity} holds. 
\begin{align*}
\fsiip({\sphere^2\setminus C},p_{-1},p_1;z)
&= \fstitch(\fdiip({\sphere^2\setminus C},p_{-1},p_{1}),\fdiip(C,p_{-1},p_{1});z) \\
&= \fstitch(\fdiip({\sphere^2\setminus C},p_{1},p_{-1}),\fdiip(C,p_{1},p_{-1});-z) \\
&= \fstitch(\fdiip(C,p_{1},p_{-1}),\fdiip({\sphere^2\setminus C},p_{1},p_{-1}));-\nicefrac1z) \\
&= f(-\nicefrac1z)
\end{align*}
Hence, part \ref{item-siipointmapswap} holds.
Parts \ref{item-siipointmaporth} and \ref{item-siipointmapstereo} hold by the same argument as in the proof of Lemma~\ref{lemma-sivpointmap}.
\end{proof}

\section{The deformation}\label{sectionDeformation}

\begin{figure}
\includegraphics[scale=0.9]{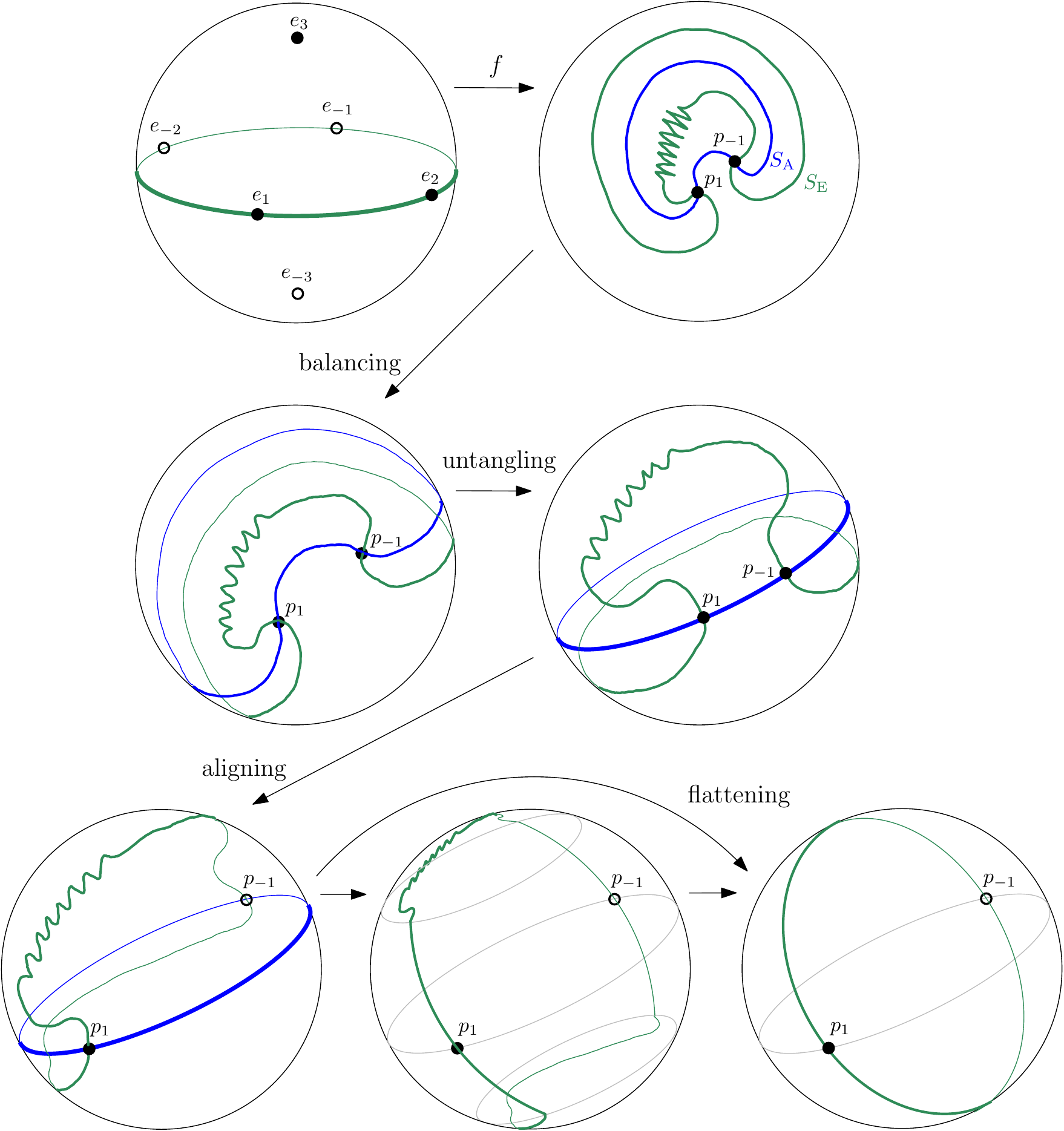}
\caption{%
First 4 stages of the deformation retraction starting from a map $f$.
}
\label{figureFirst}
\end{figure}

\begin{figure}
\includegraphics[scale=0.9]{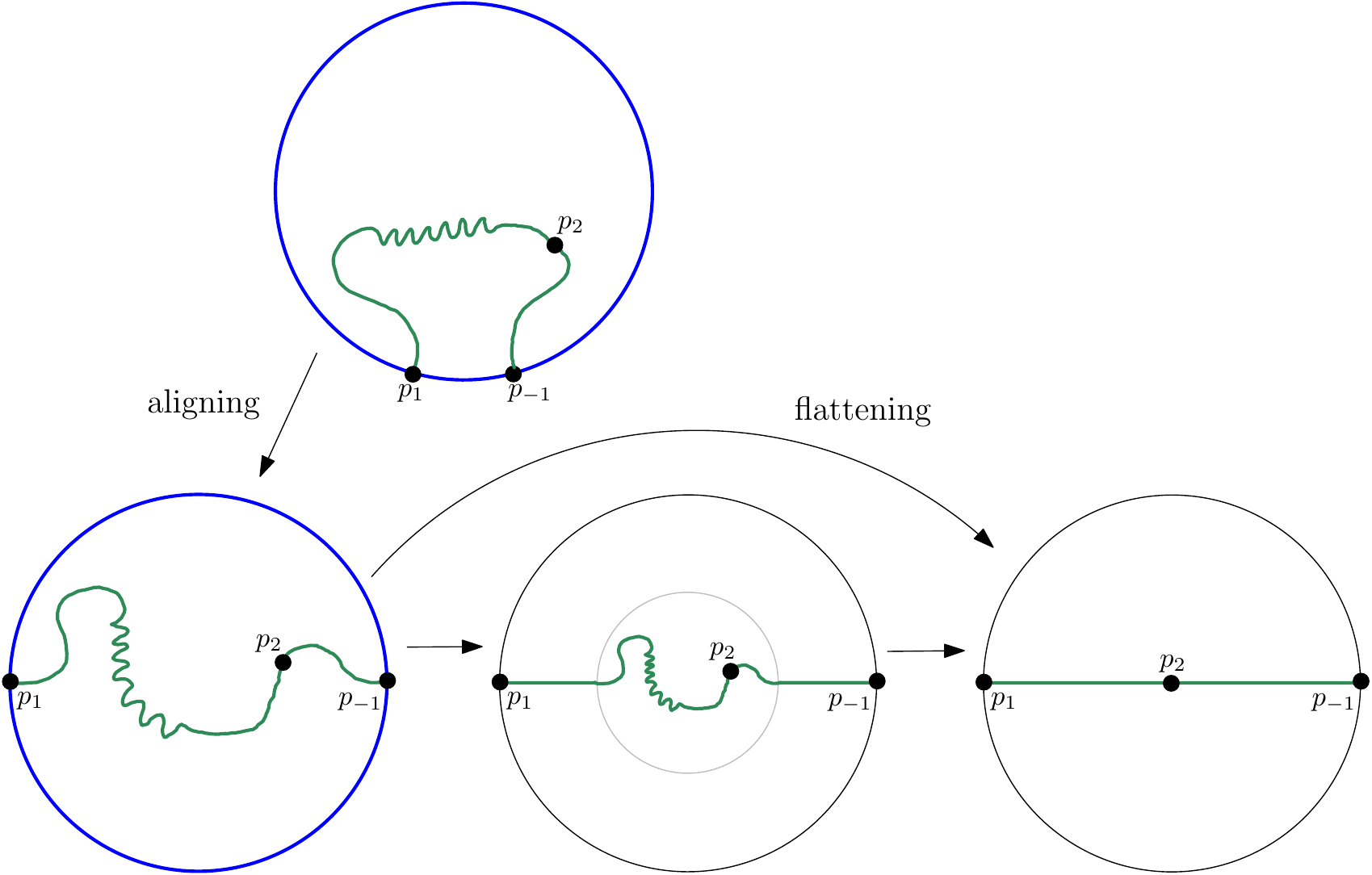}
\caption{%
Stages 3 and 4 of the deformation retraction.
}
\label{figureThreeFour}

\includegraphics[scale=0.9]{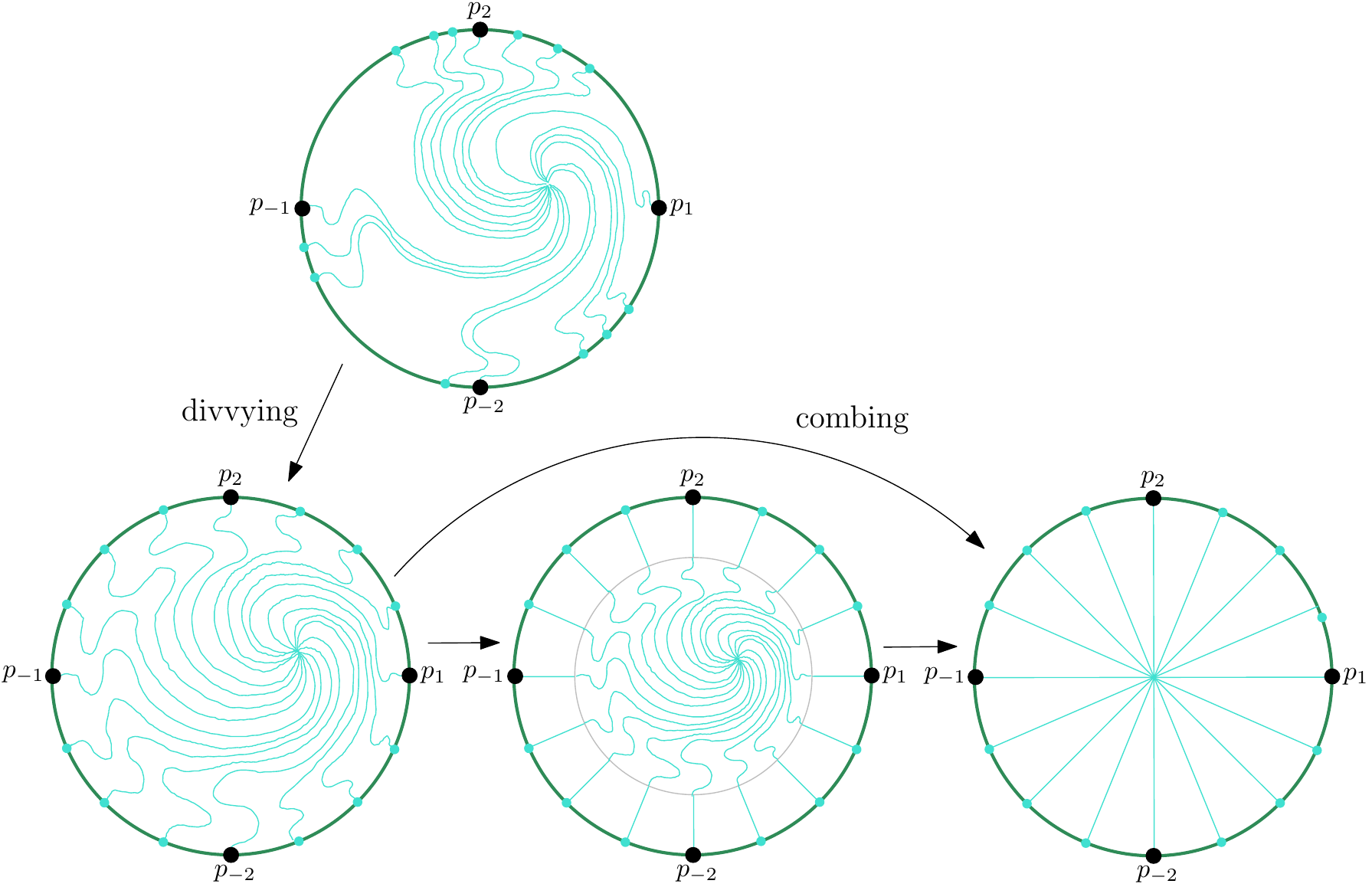}
\caption{%
Stages 5 and 6 of the deformation retraction.
The thin lines in the disk show how evenly spaced meridians are mapped.
}
\label{figureFiveSix}
\end{figure}

Our goal here is to construct the deformation retraction implied by Theorem \ref{theoremDeformation}, which we denote by $\rho$.  
We first define a collection of objects that we will refer to when we construct $\rho$.
These objects are defined in terms of $\rho$ or in terms of each other. 
In each stage of the deformation, we define the evolution of some of these objects, and $\rho$ and the other objects will then be determined as a consequence of the defining relationships between the objects.

Recall $e_{-i} = -e_i$ and $e_i$ denotes the $i$th standard basis vector.
For $f \in \hom(\sphere^2)$ and $t \in [0,6]_\mb{R}$, let
\begin{align*}
p_i(f,t) &= \rho(f,t;e_i) \quad \text{for } i \in \{-3,-2,-1,1,2,3\}, \\ 
\smagellan(f,t) &= \rho(f,t;e_3^\bot), \\
\fmagellan(f,t) &= \fsivp\left([\smagellan,p_1,p_2,p_{-1},p_{-2}](f,t)\right). \\
\samundsen(f,0) &= \fmagellan\left(f,t;\overline{\mb{R}}\right) \\
&= \bigcup_{s \in \{1,-1\}} \fdivp\left([\smagellan,p_1,p_{s2},p_{-1},p_{-s2}](f,0);[-1,1]_{\mb{R}}\right), \\
\samundsen(f,t) &= [ \rho(f,t)\circ f^{-1}] (\samundsen(f,0)),  \\
\intertext{
let $C_1(f,t)$ be the region bounded by $\samundsen(f,t)$ that contains the point $p_2$, and $C_{-1}(f,t)$ be that containing $p_{-2}$, and let }
\famundsen(f,t) &= \fsiip([C_1,p_1,p_{-1}](f,t)). 
\end{align*}
The letter E in $\smagellan$ and $\fmagellan$ is for equator, and A in $\samundsen$ and $\famundsen$ is for Roald Amundsen, a polar explorer. 
We refer to $e_3^\bot$ as the equator.
In some cases we may suppress the argument for cleaner notation.
For example, we may simply write $p_1$ for $p_1(f,t)$, as long as the meaning can be understood from context. 

The deformation will proceed through 6 stages: balancing, untangling, aligning, flattening, divvying, and combing.  
Each stage will occur over a unit interval, so that the whole deformation occurs over $[0,6]_\mb{R}$ instead of a single unit interval.
The first 4 stages deform the image of the equator, $\smagellan$, to a great circle; see Figures \ref{figureFirst} and \ref{figureThreeFour}.
The first stage, balancing, deforms the curve $\samundsen$ to a bisector of the sphere.
That is, $\samundsen$ is deformed to make the regions on either side each have area $2\pi$. 
The second stage, untangling, deforms $\samundsen$ to a great circle.
The third stage, aligning, moves the points $p_1$ and $p_{-1}$ into antipodal position.
The fourth stage, flattening, deforms the curve $\smagellan$ to a great circle.
Once the equator is mapped to a great circle, we deal the map in the two hemispheres on either side separately; see Figure \ref{figureFiveSix}. 
The fifth stage, divvying, deforms the map to an isometry on the equator, and the sixth stage, combing, deforms the map to an isometry on the rest of the sphere.

\subsection{Balancing}

In the first stage, we deform $\samundsen$ so that each of the regions on either side have area $2\pi$. 

Let $\omega(f,t) = \famundsen(f,0)\circ \e^t\famundsen(f,0)^{-1} $.
Let $a(f,t)$ be the area of 
\[D(f,t) = \omega(f,t;C_1(f,0)) =  \famundsen(f,0;\e^{t}\mathbf{D}).\]
Observe that 
$a(f)$ is a strictly increasing continuous function on $\overline{\mb{R}}$ from $a(-\infty)=0$ to $a(\infty) = 4\pi$.
Let $T(f) \in \mb{R}$ such that $a(f,T(f))= 2\pi$.
For $t \in [0,1]_\mb{R}$, let 
\[ \rho(f,t) = \omega(f,tT(f)) \circ f.\]

\begin{lemma}\label{lemmarectifiable}
For all $f \in \hom(\sphere^2)$ and $t \in \mb{R}$, 
the boundary of $D(f,t)$ is rectifiable, and so has null area. 
Also, $C_1(f,1) = D(f,T(f))$ and has area $2\pi$. 
\end{lemma}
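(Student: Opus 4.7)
The plan is structured around first establishing that $\samundsen(f,0)$ is rectifiable, which is the essential input: Lemma \ref{lemma-iipointmap} requires the boundary of $C_1(f,0)$ to have null area for $\famundsen(f,0) = \fsiip(C_1(f,0), p_1, p_{-1})$ to even be defined. I would begin by writing $\samundsen(f,0) = \widetilde h_1([-1,1]) \cup \widetilde h_2([-1,1])$, where each $\widetilde h_s = \fdivp(\smagellan(f,0), p_1, p_{s2}, p_{-1}, p_{-s2})$ is internally conformal. This decomposition comes from the simplification of $\fstitch$ along $\overline{\mb{R}}$, combined with the definition of $\fmagellan = \fsivp$. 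On the open diameter $(-1,1)$ each $\widetilde h_s$ is holomorphic, so its image is a smooth curve in the interior of the target region.

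To close the arcs rectifiably, I would factor $\widetilde h_s = g \circ \fivpa(u_1, u_2, u_{-1}, u_{-2})^{-1}$ as in the construction of $\fdivp$, so that $\widetilde h_s([-1,1])$ is the image under a Riemann map $g$ of a fixed circular arc in $\overline{\mathbf{D}}$ joining $u_1$ to $u_{-1}$, and bound $\int_{-1}^{1}|\widetilde h_s'(t)|\,dt$ by a conformal length estimate for $g$ restricted to that interior arc. Rectifiability of $\samundsen(f,0)$ follows, hence null area; $\famundsen(f,0)$ is then well defined, and $D(f,t) = \famundsen(f,0)(e^t\mathbf{D})$ is a topological disk in $\sphere^2$.

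For $t \neq 0$, write $\famundsen(f,0) = \fstitch(h_1, h_2)$ with $h_1 = \fdiip(C_1(f,0), p_1, p_{-1})$ and $h_2 = \fdiip(\sphere^2 \setminus C_1(f,0), p_1, p_{-1})$. The piecewise formula for $\fstitch$ shows that, as a set, the boundary of $D(f,t)$ equals $h_1(e^t\sphere^1)$ when $t<0$ or $h_2(e^{-t}\sphere^1)$ when $t>0$: in each case the source circle is a compact subset of $\mathbf{D}^\circ$ on which $h_i$ is holomorphic, so the image is a smooth Jordan curve with finite length $\int_0^{2\pi}|h_i'(re^{i\theta})|\,r\,d\theta$, and in particular null area.

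For the final identification, I would unwind $\rho(f,1) = \omega(f, T(f)) \circ f$ together with $\omega(f,t) = \famundsen(f,0) \circ [z \mapsto e^t z] \circ \famundsen(f,0)^{-1}$. Since $\samundsen(f,0) = \famundsen(f,0)(\sphere^1)$, this gives $\samundsen(f,1) = \omega(f,T(f))(\samundsen(f,0)) = \famundsen(f,0)(e^{T(f)}\sphere^1) = \partial D(f, T(f))$. Both $C_1(f,1)$ and $D(f, T(f))$ are bounded by this common Jordan curve and both contain the basepoint $p_2(f,1) = \omega(f,T(f))(p_2(f,0))$, because $p_2(f,0) \in C_1(f,0) = \famundsen(f,0)(\mathbf{D})$ is sent by $\omega(f,T(f))$ into $\famundsen(f,0)(e^{T(f)}\mathbf{D}) = D(f, T(f))$. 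Hence $C_1(f,1) = D(f, T(f))$, whose area equals $a(f, T(f)) = 2\pi$ by the defining choice of $T(f)$. The main obstacle is the first step: the conformal derivative of $\widetilde h_s$ can be singular as $t \to \pm 1$ for highly irregular $\smagellan(f,0) = f(e_3^\bot)$, so honest rectifiability of the diameter image at the boundary points $p_{\pm 1}$ demands a careful conformal length estimate rather than a pointwise smoothness bound.
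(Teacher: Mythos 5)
Your proposal tracks the paper's own proof almost exactly: the reduction of $\partial D(f,t)$ for $t<0$ to $f_1(\e^{t}\sphere^1)$ with $f_1=\fdiip([C_1,p_1,p_{-1}](f,0))$ conformal on a neighborhood of the compact circle $\e^{t}\sphere^1\subset\mathbf{D}^\circ$ (symmetrically for $t>0$), and the unwinding of $\omega(f,T(f))$ to get $C_1(f,1)=\famundsen(f,0;\e^{T(f)}\mathbf{D})=D(f,T(f))$ of area $2\pi$, are both exactly what the paper does, and those parts of your argument are sound.

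The one step you leave open --- rectifiability of $\samundsen(f,0)$ at the endpoints $p_{\pm1}$ --- is a genuine issue, but your proposed repair cannot close it unconditionally. A ``conformal length estimate'' for the image of the diameter is the Gehring--Hayman inequality, which bounds the length of $g([-1,1])$ by a universal constant times the infimal length of curves in the region joining $p_1$ to $p_{-1}$; that infimum is finite only when $p_{\pm1}$ are rectifiably accessible, and for an arbitrary Jordan curve $\smagellan(f,0)=f(e_3^\bot)$ (e.g.\ one spiraling with infinite length into $p_1$) they need not be. So literal rectifiability of $\samundsen(f,0)$ can actually fail; the paper's one-line assertion that it is a ``union of two smooth curves'' glosses over exactly this point. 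The observation that rescues both your argument and the paper's is that every downstream use of this lemma (the hypothesis of Lemma \ref{lemma-iipointmap} defining $\fdiip$, and the hypothesis of Lemma \ref{lemma-area-convergence}) needs only the null-area conclusion, and that follows immediately from interior smoothness: the image of the open diameter is a countable union of compact smooth arcs, each of measure zero, and adjoining the two endpoints keeps the set null. You should replace the unfinished length estimate with that measure-theoretic remark (and, if you want the lemma as literally stated, flag that the $t=0$ rectifiability claim is the part that does not survive for general $f$).
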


\begin{lemma}\label{lemmabalancing}
For all $t \in [0,1]_\mb{R}$ and $f \in \hom(\sphere^2)$, we have the following. 
\begin{enumerate}
\item \label{itemcontinuous-a}
In the sup-metric, $\rho(f,t)$ depends continuously on $f$ and $t$.
\item \label{itemequivariant-a}
For all $Q \in \orth_3$, and $s \in \{1,-1\}$, 
we have $\rho(Qfs,t) = Q\rho(f,t)s$. 
\item \label{itemstrong-a}
If $f \in \orth_3$, then $\rho(f,t) = f$.
\item \label{itemstart}
$\rho(f,0) = f$.
\end{enumerate}
\end{lemma}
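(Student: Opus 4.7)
My plan is to dispatch the four parts in increasing order of difficulty, isolating the continuity of the equilibrating time $T(f)$ as the main obstacle.

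Part \ref{itemstart} follows directly from the definitions: $\omega(f,0) = \famundsen(f,0)\circ \e^0 \famundsen(f,0)^{-1} = \id_{\sphere^2}$, so $\rho(f,0) = f$. For Part \ref{itemstrong-a}, I will substitute $f = Q \in \orth_3$ and chase the definitions using the equivariance and stereographic-projection identities in Lemma \ref{lemma-sivpointmap}. This gives $\fmagellan(Q,0) = Q\circ\mathrm{stereo}_{e_{-3}}$, so $\samundsen(Q,0) = Q(e_2^\bot)$ is a great circle and $C_1(Q,0) = Q(\{x : \langle e_2,x\rangle \geq 0\})$ has area exactly $2\pi$. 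Hence $a(Q,0) = 2\pi$, forcing $T(Q) = 0$ and therefore $\rho(Q,t) = \omega(Q,0)\circ Q = Q$.

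For Part \ref{itemequivariant-a}, I will track how each intermediate object transforms under $f \mapsto Qfs$, invoking the equivariance parts of Lemmas \ref{lemma-sivpointmap} and \ref{lemma-siipointmap} at each step. For $Q\in\orth_3$ acting on the left, I will obtain $\famundsen(Qf,0) = Q\famundsen(f,0)$ (the $-\sorth_3$ case absorbs the complex conjugation exactly as in the proof of Lemma \ref{lemma-sivpointmap}), hence $\omega(Qf,t) = Q\omega(f,t)Q^{-1}$, $D(Qf,t) = QD(f,t)$, $a(Qf,t) = a(f,t)$, and $T(Qf) = T(f)$. For the right action $s=-1$, the swap identity in Lemma \ref{lemma-siipointmap}\ref{item-siipointmapswap} gives $\famundsen(fs,0;z) = \famundsen(f,0;-1/z)$, from which a short calculation yields $\omega(fs,t) = \omega(f,-t)$, so that $D(fs,t) = \sphere^2 \setminus D(f,-t)$ and thus $a(fs,t) = 4\pi - a(f,-t)$; this forces $T(fs) = -T(f)$. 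Combining the two actions, the sign reversal in $T$ exactly cancels the time reversal in $\omega$, yielding $\rho(Qfs,t) = Q\rho(f,t)s$.

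The main obstacle is Part \ref{itemcontinuous-a}, which rests on the continuity of $T(f)$. By Lemma \ref{lemma-siipointmap}\ref{item-siipointmapcontinuity}, $\famundsen(f,0)$ depends continuously on $f$ in the sup-metric, and Lemma \ref{lemmasupconvergence} then gives joint continuity of $\omega(f,t) = \famundsen(f,0)\circ \e^t\famundsen(f,0)^{-1}$ in $(f,t)$. Since the circles $\e^t\sphere^1$ vary continuously in $t$ in Hausdorff distance, the boundary $\partial D(f,t) = \famundsen(f,0;\e^t\sphere^1)$ varies continuously in $(f,t)$ in Fréchet distance; and by Lemma \ref{lemmarectifiable} this boundary is rectifiable and therefore has null area, so Lemma \ref{lemma-area-convergence} yields joint continuity of $a(f,t)$. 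Since $a(f,\cdot)\colon\mb{R}\to(0,4\pi)$ is a continuous strictly increasing bijection with fixed target $2\pi$, the equation $a(f,T(f)) = 2\pi$ determines $T$ continuously via a subsequential contradiction argument using strict monotonicity. A final application of Lemma \ref{lemmasupconvergence} to $\rho(f,t) = \omega(f,tT(f))\circ f$ then completes the proof. The central subtlety is precisely the appeal to the null area of $\partial D(f,t)$, which is the reason Lemma \ref{lemmarectifiable} is isolated as a separate lemma.
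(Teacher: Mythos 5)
Your proposal is correct and follows essentially the same route as the paper: direct computation for $\rho(f,0)=f$, reduction to the stereographic-projection identities for the case $f\in\orth_3$, step-by-step equivariance of $\smagellan$, $\samundsen$, $C_1$, $\famundsen$, $\omega$, $a$, $T$ for part 2 (your separate treatment of the left $Q$-action and the right $s$-action, with $a(fs,t)=4\pi-a(f,-t)$, is in fact slightly more careful than the paper's compressed $D(Qfs,t)=QD(f,st)$, but lands on the same $T(Qfs)=sT(f)$), and for part 1 the same key chain of rectifiability of $\partial D$ (Lemma \ref{lemmarectifiable}), area convergence (Lemma \ref{lemma-area-convergence}), strict monotonicity of $a(f,\cdot)$, and a compactness/subsequence contradiction to get continuity of $T$.
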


\begin{proof}[Proof of Lemma \ref{lemmarectifiable}]
In the case where $t=0$, the boundary of $D(f,t)$ is $\samundsen(f,0)$, which is the union of two smooth curves, so the boundary is rectifiable.

Consider the case where $t < 0$.
Let 
$f_1 = \fdiip([C_1,p_1,p_{-1}](f,0))$ and $f_0 = \famundsen(f,0)$ restricted to $\sphere^1$.
Then, 
\begin{align*}
D(f,t)
&= \famundsen(f,0;\e^t\mathbf{D}) \\
&= \fsiip([C,p_1,p_{-1}](f,0);\e^t\mathbf{D}) \\
&= [z \mapsto [f_1\circ |z|f_1^{-1}\circ f_0](\nicefrac{z}{|z|})](\e^t\mathbf{D}\setminus 0) \cup \{0\} \\
&= f_1(\e^{t}\mathbf{D}),
\end{align*}
since $f_1^{-1}\circ f_0$ sends $\sphere^1$ to $\sphere^1$ bijectively. 
Since $f_1$ is internally conformal, the boundary of $D(f,t)$ is smooth and therefore rectifiable. 
The case where $t>0$ follows similarly. 
Also, 
\begin{align*}
C_1(f,1) 
&= [ \rho(f,1)\circ f^{-1}](C_1(f,0)) \\
&= [\omega(f,T(f))] (C_1(f,0)) \\
&= [\famundsen(f,0)\circ \e^{T(f)}\famundsen(f,0)^{-1}](C_1(f,0)) \\
&= \famundsen(f,0; \e^{T(f)} \mathbf{D}) \\
&= D(f,T(f)),
\end{align*}
which by definition of $T$ has area $2\pi$. 
\end{proof}

\begin{proof}[Proof of Lemma \ref{lemmabalancing} part \ref{itemcontinuous-a}]
Consider $f_k\in\hom(\sphere^2)$ and $t_k \in [0,1]_\mb{R}$ in the domain of $\rho$ such that $f_k \to f_\infty$ uniformly and $t_k \to t_\infty$.
Then, $\smagellan(f_k,0) \to \smagellan(f_\infty,0)$ in Fréchet distance $p_i(f_k,0) \to p_i(f_\infty,0)$, 
so $\samundsen(f_k,0) \to \samundsen(f_\infty,0)$ in Fréchet distance by Lemma \ref{lemma-sivpointmap}, which implies that $C(f_k,0) \to C(f_\infty,0)$ in boundary Fréchet distance, so $\famundsen(f_k,0) \to \famundsen(f_\infty,0)$ uniformly,
which implies that $\famundsen(f_k,0)^{-1} \to \famundsen(f_\infty,0)^{-1}$ uniformly as well. 

Suppose for the sake of contradiction that $T(f_k)$ does not converge to $T(f_\infty)$.
Since $\overline{\mb{R}}$ is compact, we may assume that $T(f_k) \to \widetilde T \neq T(f_\infty)$ for some $\widetilde T \in \overline{\mb{R}}$.
Let us assume for now that $\widetilde T < T(f_\infty)$. 
Then, $\e^{T(f_\infty)}\mathbf{D} \setminus \e^{\widetilde T}\mathbf{D}$ has non-empty interior, 
so $D(f_\infty,T(f_\infty)) \setminus D(f_\infty,\widetilde T) = \famundsen(f_\infty,0;\e^{T(f_\infty)}\mathbf{D} \setminus \e^{\widetilde T}\mathbf{D})$ also has non-empty interior, which implies that 
$a(f_\infty,\widetilde T) < a(f_\infty,T(f_\infty)) = 2\pi$. 
Also, $D(f_k,T(f_k)) \to D(f_\infty,\widetilde T)$ in boundary Fréchet distance, 
and by Lemma \ref{lemmarectifiable}, the boundary of $D(f_\infty,\widetilde T)$ is rectifiable, so by Lemma \ref{lemma-area-convergence}, 
$a(f_k,T(f_k)) \to a(f_\infty,\widetilde T) < 2\pi$,
so  
$a(f_k,T(f_k)) < 2\pi$ for some $k$ sufficiently large,
but $a(f_k,T(f_k)) = 2\pi$ by definition of $T$, 
which is a contradiction.
Similarly, we have $a(f_k,T(f_k)) > 2\pi$ for some $k$ sufficiently large in the case where $\widetilde T > T(f_\infty)$, which also contradicts the definition of $T$.
Hence, $T(f_k)$ must converge to $T(f_\infty)$.

Thus, $\omega(f_k,t_kT(f_k)) \to \omega(f_\infty,t_\infty T(f_\infty))$ uniformly, 
and therefore $\rho(f_k,t_k) \to \rho(f_\infty,t_\infty)$ uniformly. 
\end{proof}


\begin{proof}[Proof of Lemma \ref{lemmabalancing} part \ref{itemequivariant-a}]
\[ \smagellan(Qfs,0) = Qfs(e_3^\bot) = Qf(e_3^\bot) = Q\smagellan(f,0). \]
By Lemma \ref{lemma-sivpointmap}, we have 
\begin{align*} 
\samundsen(Qfs,0) 
&= \fsivp\left([\smagellan,p_1,p_2,p_{-1},p_{-2}](Qfs,0);\overline{\mb{R}}\right) \\
&= \fsivp\left(Q[\smagellan,p_s,p_{s2},p_{-s},p_{-s2}](f,0);\overline{\mb{R}}\right) \\
&= Q\fsivp\left([\smagellan,p_1,p_2,p_{-1},p_{-2}](f,0);[z \mapsto sz](\overline{\mb{R}})\right) \\
&= Q \samundsen(f,0).
\end{align*}
Since $\samundsen(Qfs,0) = Q \samundsen(f,0)$ and $p_2(Qfs,0) = Qp_{s2}(f,0)$, 
we have $C_1(Qfs,0) = QC_s(f,0)$.
By Lemma \ref{lemma-siipointmap}, we have 
\begin{align} 
\famundsen(Qfs,0) 
&= \fsiip([C_1,p_1,p_{-1}](Qfs,0)) \nonumber \\
&= \fsiip(Q[C_s,p_{s},p_{-s}](f,0)) \nonumber \\
&= Q\famundsen(f,0)\circ[z \mapsto sz^s], \label{equation-famundsen-equivariant}
\end{align}
\begin{align*} 
\omega(Qfs,t) 
&= \famundsen(Qfs,0)\circ \e^t \famundsen(Qfs,0)^{-1} \\
&= Q\famundsen(f,0)\circ s^2\e^{st} \famundsen(f,0)^{-1}Q^{-1} \\
&= Q\omega(f,st)Q^{-1},
\end{align*} 
so $D(Qfs,t) = QD(f,st)$,
so $a(Qfs,t) = a(f,st)$,
so $T(Qfs) = sT(f)$, so 
\begin{align*}
\rho(Qfs,t) 
&= \omega(Qfs,tT(Qfs)) Qfs \\
&= Q\omega(f,stsT(f))Q^{-1} Qfs \\
&= Q\rho(f,t)s. \qedhere
\end{align*}

\end{proof}


\begin{proof}[Proof of Lemma \ref{lemmabalancing} parts \ref{itemstrong-a} and \ref{itemstart}]
For part \ref{itemstart}, we have 
\[ 
\rho(f,0) 
= \omega(f,0) \circ f 
= \famundsen(f,0)\circ \e^0 \famundsen(f,0)^{-1}\circ f 
= f.
\]
Next, consider the case $f=\id$.
In this case, $\smagellan(\id,0) = e_3^\bot$ and $p_i(\id,0) = e_i$, so 
by Lemma \ref{lemma-sivpointmap}, 
the map $\fmagellan(\id,0)$ is stereographic projection though $e_{-3}$, 
so $\samundsen(\id,0) = e_2^\bot$ and $C(\id,0) = \{x\in\sphere^2: \langle e_2,x\rangle\geq 0\}$, which has area $2\pi$, so $T(\id) = 0$.

In the case $f \in \orth_3$, we have $T(f) = T(\id) = 0$, so 
\[
\rho(f,t) 
= \omega(f,tT(f)) \circ f 
= \omega(f,0) \circ f 
= f. \qedhere
\]
\end{proof}

\subsection{Untangling}\label{subsectionUntangling}

In the second stage, we deform $\samundsen$ to a great circle. 
To do so, $\samundsen$ will evolve by level-set flow, which is a weak formulation of curvature flow for Jordan curves of area 0, an extremely mild smoothness condition. 
Here are the important properties of level-set flow for our purpose.
Joseph Lauer showed that level-set flow is defined for all null area simple closed curves up to some positive time depending on the curve, and that the curve immediately becomes smooth and evolves by curvature flow  \cite{lauer2016evolution}.
Michael Gage showed that 
for bisectors, i.e., null area simple closed curves that divide the sphere into 2 regions that each have area $2\pi$, 
the evolution by curvature flow approach a great circle as time becomes infinite \cite{gage1990curve}.
Dobbins showed that for bisectors, level-set flow depends continuously in Fréchet distance on initial conditions \cite{dobbins2021continuous}. 
Sigurd Angenet showed that for a pair of smooth curves evolving by curvature flow, the number of intersection points is finite and non-increasing, provided that the curves are initially distinct \cite{angenent1991parabolic}.
Dobbins showed that the points of intersection move along a trajectory that depends continuously on initial conditions \cite{dobbins2021continuous}.

As $\samundsen$ evolves, we will also have to determine how the points $p_1$ and $p_{-1}$ move. 
For this we introduce another curve $\sgagarin$, which will also evolve by curvature flow, and we will let $p_1$ and $p_{-1}$ be the points where the curves $\samundsen$ and $\sgagarin$ intersect.
By Angenet's theorem this is well defines for positive time, but this might not be well defined at the limit as $t$ becomes infinite. 
To deal with this we will determine a time $T_1$, after which the points $p_1$ and $p_{-1}$ will move according to reparameterizations of the regions on either side of $\samundsen$ using the Carathéodory mapping theorem.




Let $L(f) = L_1\cup L_{-1} \subset \overline{\mb{C}}$ be the lune consisting of 2 circular arcs $L_1,L_{-1}$ from $1$ to $-1$ such that $\famundsen(f,1;L_n)$ bisects $C_n$. 
Let $\gamma_\mathrm{A}(f)$ and $\gamma_\mathrm{L}(f)$ be the level-set flows starting from 
$\gamma_\mathrm{A}(f;0) = \samundsen(f,1)$ and $\gamma_\mathrm{L}(f;0) = \famundsen(f,1;L(f))$.
Note that since $\gamma_\mathrm{A}(f;0)$ is a bisector, $\gamma_\mathrm{A}(f;\infty) = \lim_{t \to \infty} \gamma_\mathrm{A}(f;t)$ exists and is a great circle,
and likewise for $\gamma_\mathrm{L}$. 

Let $\tau(t) = \frac{t-1}{2-t}$.
Note that $\tau$ is strictly increasing on $[1,2]_\mb{R}$ and sends $[1,2]_\mb{R}$ to $[0,\infty]_\mb{R}$.
For $t \in (1,2]_\mb{R}$, let 
$\samundsen(f,t) = \gamma_\mathrm{A}(f,\tau(f,t))$
and $\sgagarin(f,t) = \gamma_\mathrm{L}(f,\tau(f,t))$. 
Let $C_n$ be on the same side of $\samundsen$ throughout the evolution. 

Let $U = U(f) = \{u_1,u_{-1}\} \subset \sphere^2$ be the two points on the sphere that are perpendicular to the plane spanned by $\samundsen(f,2)$ with $u_1 \in C_1(f,2)$.  Note that $\samundsen(f,2)$ is a great circle, so $U$ is well defined. 

For $(f,t)$ and $n$ such that $u_n(f) \in (C_n(f,t))^\circ$, 
let 
$\mr{h}_n(f,t) : C_n(f,2) \to C_n(f,t)$ be the internally conformal homeomorphism that is fixed at $u_n = h_n(f,t;u_n)$ and $\mathrm{D}(h_n(f,t);u_n) = \lambda\id$ with $\lambda > 0$ where $\mathrm{D}(h,x)$ denotes the total derivative of a function $h$ at $x$. 
Note that $h_n$ is uniquely determined by these conditions by the Schwarz lemma. 
Let 
\begin{align*}
r_\mr{h}(f,t) 
&= \sup\left\{ \left\| h_n(f,t;x) - x\right\| 
: x \in \samundsen(f,2), n \in \{1,-1\} \right\}, \\ 
\intertext{where $r_\mr{h}$ is only defined when $h_1$ and $h_{-1}$ are both defined, }
r_\mr{u}(f,t) 
&= \sup\left\{\left|\left(\|x-u_n\|\right) - \sqrt{2}\right| 
: x \in \samundsen(f,t), n \in \{1,-1\} \right\}, \\
r_\mr{t}(f,t) 
&= \sup\left\{ r_\mr{h}(f,s), r_\mr{u}(f,s) 
: t \leq s \leq 2 \right\}, \\
w(f,t) &= \left(1-\mr{r_t}(f,t)\right)^+
\end{align*}
where $(x)^+ = \max(x,0)$ is the positive part of a function. 
We will show that $r_\mr{t}(f,t)$ is positive and decreasing on $t \in [1,2]_\mb{R}$. 
Let $T_1(f)$ be the weighted median value of $t$ with weight $w(f,t)$, i.e.\ such that 
\[ \int_{t=1}^{T_1(f)} w(f,t) \mathrm{d} t = \frac12 \int_{t=1}^{2} w(f,t) \mathrm{d} t. \]

For a pair of bisectors that intersect at a pair of points, there is a unique trajectory starting at each intersection point that remains in the intersection of the curves as they evolve by level-set flow \cite{dobbins2021continuous}. 
For $t \leq T_1(f)$, let 
$p_1(f,t)$ be the trajectory starting from $p_1(f,1)$ that remains in $[\samundsen \cap \sgagarin](f,t)$,
and define $p_{-1}$ analogously. 
For $t > T_1(f)$, let
\[p_n(f,t) = [h_n(f,t)\circ h_n(f,T_1(f))^{-1}](p_n(f,T_1(f))). \]

For $t \in (1,2]_\mb{R}$, 
let 
\[\rho(f,t) = \famundsen(f,t) \circ \famundsen(f,1)^{-1} \circ \rho(f,1). \]

\begin{lemma}\label{lemmasamundsen}
For all $f \in \hom(\sphere^2)$, $\samundsen(f,2)$ is a great circle. 
\end{lemma}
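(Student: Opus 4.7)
The plan is to derive this as a direct application of the three results on level-set flow cited at the beginning of the subsection (Lauer, Gage, and the preservation of the bisector property), together with the output of the balancing stage.

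First, I would observe that $\samundsen(f,1)$ is a bisector of the sphere with null area. Indeed, by Lemma \ref{lemmarectifiable}, $\samundsen(f,1) = \partial C_1(f,1)$ and $C_1(f,1) = D(f,T(f))$ is rectifiable with area exactly $2\pi$. Since the boundary is rectifiable, it has null area (and in particular zero two-dimensional measure), which forces $\sphere^2 \setminus C_1(f,1)$ to also have area $2\pi$. Hence $\samundsen(f,1)$ satisfies the hypotheses for a null-area bisector.

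Second, by Lauer's theorem on level-set flow for null-area Jordan curves, the flow $\gamma_\mathrm{A}(f;s)$ starting at $\gamma_\mathrm{A}(f;0) = \samundsen(f,1)$ is defined for all $s \geq 0$ and immediately becomes a smooth curvature flow. Because area on either side is preserved under curvature flow (each region loses area at rate equal to the integral of curvature, which for a closed curve bounding a region of the sphere gives equal rates on both sides only when the regions have equal area — more cleanly, this is exactly what is meant by saying bisectors remain bisectors under the flow), $\gamma_\mathrm{A}(f;s)$ remains a bisector for all $s \geq 0$.

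Third, I would invoke Gage's theorem: the curvature flow of a null-area bisector of $\sphere^2$ converges, as $s \to \infty$, to a great circle. Combined with the reparameterization $\samundsen(f,t) = \gamma_\mathrm{A}(f,\tau(t))$ where $\tau(2) = \infty$, this gives $\samundsen(f,2) = \lim_{s \to \infty} \gamma_\mathrm{A}(f;s)$, which is a great circle.

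The only subtlety I foresee is phrasing the conclusion at $t=2$: the value $\samundsen(f,2)$ is defined through the reparameterization $\tau$, so strictly speaking it is the limit $\gamma_\mathrm{A}(f;\infty)$ rather than a value of the flow at finite time; Gage's theorem, as quoted in the subsection preamble, supplies exactly this limit and identifies it as a great circle, so no further work is needed. No equivariance or continuity claim appears in this lemma, so the argument remains a pointwise invocation of the cited results.
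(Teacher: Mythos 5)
Your proposal is correct and takes essentially the same route as the paper: the paper's proof is a one-line invocation of Gage's theorem together with the observation that $\samundsen(f,2)$ is by definition the limit $\gamma_\mathrm{A}(f;\infty)$ of the curvature flow, the bisector hypothesis having been secured earlier (your first step, via Lemma \ref{lemmarectifiable}, matches what the paper establishes in the balancing stage and asserts in the preamble of the untangling subsection). Your version merely spells out the intermediate verifications that the paper leaves implicit.
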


\begin{lemma}\label{lemmauntangling}
For all $t \in [1,2]_\mb{R}$ and $f \in \hom(\sphere^2)$, we have the following. 
\begin{enumerate}
\item \label{itemcontinuous-b}
In the sup-metric, $\rho(f,t)$ depends continuously on $f$ and $t$.
\item \label{itemequivariant-b}
For all $Q \in \orth_3$, and $s \in \{1,-1\}$, 
we have $\rho(Qfs,t) = Q\rho(f,t)s$. 
\item \label{itemstrong-b}
If $f \in \orth_3$, then $\rho(f,t) = f$.
\end{enumerate}
\end{lemma}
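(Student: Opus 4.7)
The strategy is to verify each of the three conclusions by tracking how the ingredients of the construction (the level-set flows $\gamma_\mathrm{A}$ and $\gamma_\mathrm{L}$, the switching time $T_1$, the intersection trajectories defining $p_n$ before $T_1$, and the conformal maps $h_n$ defining $p_n$ after $T_1$) behave under continuity in $(f,t)$, under the action of $\orth_3\times\mb{Z}_2$, and under restriction to $f\in\orth_3$.

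For part \ref{itemcontinuous-b}, I would first apply Lemma \ref{lemmabalancing} and Lemma \ref{lemmarectifiable} to conclude that $\samundsen(f,1)$ is a null area bisector, with $\rho(f,1)$, $\famundsen(f,1)$ depending continuously on $f$. Dobbins's continuity theorem for level-set flow of bisectors then gives continuity of $\gamma_\mathrm{A}(f,\tau(t))$ and $\gamma_\mathrm{L}(f,\tau(t))$ in Fréchet distance jointly in $(f,t)$. For the marked points, on $[1,T_1(f)]$ the trajectories $p_n$ are intersection trajectories of the two evolving bisectors, which vary continuously by Dobbins's continuity result for such trajectories; for $t>1$ the curves are smooth by Lauer's theorem and Angenent's theorem keeps the intersection number finite. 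To handle the switch at $T_1$, I would show continuity of $T_1$ by first proving that for $t>1$ the regions $C_n(f,t)$ are smoothly bounded, so the conformal maps $h_n(f,t)$ are continuous by Radó's theorem. Gage's theorem gives $\samundsen(f,t)\to\samundsen(f,2)$ in Fréchet distance as $t\to 2$, hence $r_\mathrm{u}(f,t)\to 0$ and (with Radó) $r_\mathrm{h}(f,t)\to 0$, which makes $r_\mathrm{t}(f,\cdot)$ a nonincreasing continuous function tending to $0$, so $w(f,\cdot)$ has positive integral and $T_1(f)$ is well defined and continuous. The two definitions of $p_n$ agree at $t=T_1(f)$ because the post-$T_1$ definition is set up to pass through the pre-$T_1$ value, giving continuity through the switch. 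Finally $\famundsen(f,t)=\fsiip([C_1,p_1,p_{-1}](f,t))$ is continuous by Lemma \ref{lemma-siipointmap}, and $\rho(f,t)=\famundsen(f,t)\circ\famundsen(f,1)^{-1}\circ\rho(f,1)$ is continuous by Lemma \ref{lemmasupconvergence}.

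For part \ref{itemequivariant-b}, I would check that each constructed object is equivariant in the appropriate sense. Using \eqref{equation-famundsen-equivariant}, the lune-bisecting condition defining $L(Qfs)$ transforms to that defining $L(f)$ under $Q$ together with the reparameterization $[z\mapsto sz^s]$. Level-set flow commutes with isometries, and transforms under the antipodal/reflecting action of $s$ in the same way as the initial condition, so $\gamma_\mathrm{A}$ and $\gamma_\mathrm{L}$ inherit the equivariance $\samundsen(Qfs,t)=Q\samundsen(f,t)$ and the analogous identity for $\sgagarin$. Passing to $t=2$ yields $u_n(Qfs)=Qu_{sn}(f)$, the maps $h_n$ transform by conjugation with $Q$ and by the swap when $s=-1$, and both the intersection-trajectory and conformal-flow definitions of $p_n$ transform consistently. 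Because $r_\mathrm{u}$ and $r_\mathrm{h}$ are defined from intrinsic distances, $T_1(Qfs)=T_1(f)$. The formula for $\rho$ then yields $\rho(Qfs,t)=Q\rho(f,t)s$ by direct composition, using part \ref{itemequivariant-a} of Lemma \ref{lemmabalancing} for the factor $\rho(f,1)$.

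For part \ref{itemstrong-b}, suppose $f\in\orth_3$. By part \ref{itemstrong-a} of Lemma \ref{lemmabalancing}, $\rho(f,1)=f$, so $\samundsen(f,1)=f(e_2^\bot)$ is already a great circle. Level-set flow is stationary on a great circle, so $\samundsen(f,t)$ is constant in $t$ and $C_n(f,t)=C_n(f,1)$ for all $t$; the conformal maps $h_n(f,t)$ are then the identity, the marked points are constant, and $\famundsen(f,t)=\famundsen(f,1)$, so $\rho(f,t)=\famundsen(f,1)\circ\famundsen(f,1)^{-1}\circ f=f$.

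The main obstacle will be the machinery around $T_1(f)$: establishing that $r_\mathrm{t}(f,\cdot)$ is actually a strictly decreasing continuous function on an interval on which $w$ has positive integral, and proving continuity of the trajectories $p_n$ across the switch at $t=T_1(f)$. This relies on smoothness of the evolved bisectors for $t>1$, uniform control on the conformal maps via Radó's theorem, and the convergence of $\samundsen(f,t)$ to the great circle $\samundsen(f,2)$ provided by Gage's theorem.
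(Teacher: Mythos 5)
Your proposal follows the same overall route as the paper: continuity is obtained by combining the continuity of the inputs at $t=1$ (Lemma \ref{lemmabalancing}), Dobbins's continuity of level-set flow and of intersection trajectories for bisectors, Rad\'o's theorem for the maps $h_n$, Gage's theorem for the $t\to 2$ behavior, continuity of $T_1$, and Lemma \ref{lemma-siipointmap}; equivariance is obtained by pushing the group action through each constructed object; and the strong-retraction property follows from stationarity of great circles under curvature flow. The structure and the key external results you invoke match the paper's proof.

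There is one concrete step you skip over. To apply the continuity of level-set flow to $\gamma_\mathrm{L}$, you need the initial condition $\gamma_\mathrm{L}(f;0)=\famundsen(f,1;L(f))$ to depend continuously on $f$, and hence you need the lune $L(f)=L_1\cup L_{-1}$ to depend continuously on $f$. But $L_n$ is defined only implicitly, as the circular arc from $1$ to $-1$ whose image under $\famundsen(f,1)$ bisects $C_n$, so its continuity is not automatic and the paper devotes a separate argument to it: one parameterizes the compact space of circular arcs from $1$ to $-1$ by their intersection with $[-\im,\im]_{\mb{C}}$, assumes a subsequence $L_1(f_k)\to\widetilde L\neq L_1(f_\infty)$, and uses Lemma \ref{lemma-area-convergence} (which applies because the relevant curves are piecewise smooth, hence rectifiable and of null area) to show the limiting curve $\famundsen(f_\infty,1;\widetilde L)$ would fail to bisect $C_1(f_\infty)$, contradicting the defining property of $L_1(f_k)$ for large $k$. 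Without some such argument your chain of continuity claims breaks at $\sgagarin(f,1)$, and everything downstream that depends on $\sgagarin$ (in particular the trajectories $p_n$ for $t\le T_1$) is unsupported. A similar, smaller omission occurs in part \ref{itemstrong-b}: to conclude the marked points are constant for $t\le T_1(f)$ you need $\sgagarin(f,t)$, not just $\samundsen(f,t)$, to be stationary, which the paper gets by observing that $\famundsen(f,1)$ is isometric to a stereographic projection and hence $\sgagarin(f,1)$ is itself a great circle.
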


\begin{proof}[Proof of Lemma \ref{lemmasamundsen}]
Curves evolving by curvature flow approach a great circle \cite{gage1990curve}, 
and $\samundsen(f,2)$ is defined as the limit as $t \to \infty$ of such a curve $\gamma_\mr{A}(f,t)$, so $\samundsen(f,2)$ is a great circle. 
\end{proof}

\begin{proof}[Proof of Lemma \ref{lemmauntangling} part \ref{itemcontinuous-b}]

Consider $f_k \to f_\infty$ in the sup-metric and $t_k \to t_\infty$.
By Lemma \ref{lemmabalancing}, $\samundsen(f_k,1) \to \samundsen(f_\infty,1)$ in Fréchet distance and $p_n(f_k,1) \to p_n(f_\infty,1)$, 
so by Lemma \ref{lemma-siipointmap} $\famundsen(f_k,1) \to \famundsen(f_\infty,1)$ in the sup-metric.

Suppose for the sake of contradiction that $L_1(f_k)$ does not converge to $L_1(f_\infty)$ in Fréchet distance.
We can parameterize the space of circular arcs through $\mathbf{D}$ from $1$ to $-1$ by the point where the arc intersects the segment $[-\im,\im]_\mb{C}$.  
Moreover, the space of such arcs is compact, provided we include the two semicircles on $\sphere^1$ between $1$ and $-1$.
Hence, we may assume that $L_1(f_k) \to \widetilde L \neq L_1(f_\infty)$; otherwise restrict to a convergent subsequence.
Let $\sgagarin_{1,k} = \famundsen(f_k,1;L_1(f_k))$,
and $\widetilde S = \famundsen(f_k,1;\widetilde L)$. 
Then, $\sgagarin_{1,k} \to \widetilde S$ in Fréchet distance, 
since $\famundsen(f_\infty,1;L_1(f_k)) \to \widetilde S$ and the Fréchet distance between $\famundsen(f_\infty,1;L_1(f_k))$ and $\sgagarin_{1,k}$ converges to 0. 
The curves $\samundsen(f_\infty,1)$ and $\widetilde S$ are piecewise smooth, and therefore rectifiable, 
so by Lemma \ref{lemma-area-convergence}, the areas of the regions of $C_1(f_k)$ on either side of $\sgagarin_{1,k}$ converge to those of $\widetilde S$,  
but $\widetilde S$ is on one side of $\sgagarin_{1,\infty}$,
so $\widetilde S$ does not bisect the sphere. 
Hence, for some $k$ sufficiently large, $\sgagarin_{1,k}$ does not bisect the sphere, 
but that contradicts the definition of $L_1$.
Thus, $L_1(f_k)$ must converge to $L_1(f_\infty)$ in Fréchet distance.
Similarly, $L_{-1}(f_k)$ converges to $L_{-1}(f_\infty)$,
so $L(f_k)$ converges to $L(f_\infty)$ in Fréchet distance.

With this, we have that $\sgagarin(f_k,1) = \famundsen(f_k,1;L(f_k)) \to \sgagarin(f_\infty,1)$ in Fréchet distance, since $L(f_k) \to L(f_\infty)$ and $\famundsen(f_k,1) \to \famundsen(f_\infty,1)$, 
and $\samundsen(f_k,1) \to \samundsen(f_\infty,1)$. 
Since level-set flow depends continuously on initial conditions for bisectors \cite{dobbins2021continuous}, $\sgagarin(f_k,t_k) \to \sgagarin(f_\infty,t_\infty)$ and $\samundsen(f_k,t_k) \to \samundsen(f_\infty,t_\infty)$ in Fréchet distance.

By Angenet's theorem  
$[\samundsen \cap \sgagarin](f,t)$ is a pair of points for $t \in [1,2)_\mb{R}$ \cite{angenent1991parabolic}, 
and the author showed that these points move along a trajectory that depends continuously on $\samundsen(f,1)$ and $\sgagarin(f,1)$ in Fréchet distance \cite{dobbins2021continuous}. 
Let $ p_{\mr{AL},n}(f,t) \in [\samundsen \cap \sgagarin](f,t)$ be the point on the trajectory starting from $p_{\mr{AL},n}(f,1)$ for $n \in \{1,-1\}$. 
By \cite[Lemma 3.1.3]{}, 
$p_{\mr{AL},n}(f_k,t_k) \to p_{\mr{AL},n}(f_\infty,t_\infty)$ provided that $t_\infty < 2$.

Since $\samundsen(f_k,t_k) \to \samundsen(f_\infty,t_\infty)$ 
and $u_n(f_k) \to u_n(f_\infty)$,   
we have $r_\mr{u}(f_k,t_k) \to r_\mr{u}(f_\infty,t_\infty)$.

In the case where $u_n(f_\infty,t_\infty) \in (C_n(f_\infty,t_\infty))^\circ$,
we have that $u_n(f_\infty,t_\infty)$ is bounded away from $\sphere^2\setminus (C_n(f_\infty,t_\infty))^\circ$,
so for all $k$ sufficiently large, $u_n(f_k,t_k) \in (C_n(f_k,t_k))^\circ$.
Hence $h_n(f_k,t_k)$ is defined for all $k$ sufficiently large, and by Rado's theorem, 
$h_n(f_k,t_k) \to h_n(f_\infty,t_\infty)$ uniformly, 
so $r_\mr{h}(f_k,t_k) \to r_\mr{h}(f_\infty,t_\infty)$.

Let $T_0$ be the last time that there is a point of $U(f_\infty)$ on $\samundsen(f_\infty,T_0)$.
Consider the case where $t_\infty > T_0$.
Consider $\eps > 0$. 

By the definition of $r_\mr{t}$, 
there is $s \geq t_\infty$ such that 
$r_c(f_\infty,s) > r_\mr{t}(f_\infty,t_\infty) - \eps/2$ 
where $c$ is either `$\mr{h}$' or `$\mr{u}$'.
In either case, we have already shown that $r_c(f_k,s) \to r_c(f_\infty,s)$.
In the case where $s = t_\infty$, 
we have $r_c(f_k,t_k) \to r_c(f_\infty,s)$,
so for all $k$ sufficiently large, $r_\mr{t}(f_k,t_k) \geq r_c(f_k,t_k) > r_c(f_\infty,s) - \eps/2$.
In the case where $s > t_\infty$, 
for all $k$ sufficiently large, $r_c(f_k,s) > r_c(f_\infty,s) - \eps/2$ 
and $t_k < s$, so again $r_\mr{t}(f_k,t_k) > r_c(f_\infty,s) - \eps/2$.
In any case $r_\mr{t}(f_k,t_k) > r_\mr{t}(f_\infty,t_\infty) - \eps$.

Since $r_c(f,t)$ is continuous as a function of $f$ and $t$, 
for all $k$ sufficiently large and all $s$ such that $|s-t_\infty| \leq |t_k - t_\infty|$, we have that  
$r_c(f_k,s) < r_c(f_\infty,t_\infty) +\eps \leq r_\mr{t}(f_\infty,t_\infty) +\eps $.
We also have by definition of $r_\mr{t}$ that for all $s \geq t_\infty$ 
that $r_c(f_\infty,s) < r_\mr{t}(f_\infty,t_\infty) + \eps/2$ 
for both $c= \mr{h}$ and $c = \mr{u}$.  
For all $k$ sufficiently large, we have $r_c(f_k,s) < r_c(f_\infty,s) +\eps/2$, so $r_c(f_k,s) < r_\mr{t}(f_\infty,t_\infty) + \eps$.
Together, we now have that for all $k$ sufficiently large, all $s \geq t_k$, and $c \in \{\rm{h,u}\}$ that $r_c(f_k,s) < r_\mr{t}(f_\infty,t_\infty) +\eps$.
Therefore, $r_\mr{t}(f_k,t_k) \leq r_\mr{t}(f_\infty,t_\infty) + \eps$.

We now have for all $k$ sufficiently large that $|r_\mr{t}(f_k,t_k) - r_\mr{t}(f_\infty,t_\infty)| \leq \eps$
Hence, $r_\mr{t}(f_k,t_k) \to r_\mr{t}(f_\infty,t_\infty)$, provided $t_\infty > T_0$. 

In the case where $t_\infty = T_0$, we have $r_\mr{u}(f_k,t_k) \to r_\mr{u}(f_\infty,t_\infty) = \sqrt{2}$, which is the largest value $r_\mr{h}$ or $r_\mr{u}$ can attain, so $r_\mr{t}(f_k,t_k) \to r_\mr{t}(f_\infty,t_\infty) = \sqrt{2}$ in this case as well, and likewise in the case where $t_\infty < T_0$, since for $k$ sufficiently large, $\sqrt{2} \geq r_\mr{t}(f_k,t_k) \geq r_\mr{u}(f_k,T_0) \to \sqrt{2} = r_\mr{t}(f_\infty,T_0)$.
Thus, $r_\mr{t}(f,t)$  is continuous as a function of $f$ and $t$, and therefore $w$ is continuous as well.

As $t \to 2$, we have $h_n(f,t) \to \id_{C_n(f,2)}$, since $\samundsen(f,t) \to \samundsen(f,2)$, so $r_\mr{h}(f,t) \to 0$, and likewise for $r_\mr(u)(f,t)$, since $\samundsen(f,t)$ converges to the great circle orthogonal to $u_1$ and $u_{-1}$, so $r_\mr{t}(f,t)$ is decreasing and converges to $0$ as $t \to 2$.
Hence, $w(f,t)$ is non-negative and increasing as a function of $t \in [1,2]_\mb{R}$ and converges to 1 as $t \to 2$.
Moreover, $w(f)$ is positive on its support, which is the half open interval $I(f) = (T_\mr{w}(f),2]_\mb{R}$ where $T_\mr{w}(f)$ is the last time that $r_\mr{t}(f,T_\mr{w}(f)) = 1$, or $I(f) = [1,2]_\mb{R}$ in the case where $r_\mr{t}(f,t) < 1$ for all $t$.

We now have that 
\[ W(f,T) = \int_{t=1}^{T} w(f,t) \mathrm{d} t \]
is continuous as a function of $T$ and strictly increasing on $I(f)$, which is the support of $W(f)$.
Therefore, $T_1(f)$ is well defined.
By Lebesgue's dominated convergence theorem $W(f_k,T) \to W(f_\infty,T)$.
Since $w(f) < 1$, $W(f)$ is 1-Lipschitz continuous for all $f$,
and since $W(f_k) \to W(f_\infty)$ pointwise, by the Arzelà-Ascoli theorem, $W(f_k)$ converges uniformly.
Therefore, $[W(f_k)]^{-1}$ converges uniformly, so 
$T_1(f_k) = [W(f_k)]^{-1}\left(\tfrac12 W(f_k,2)\right) \to T_1(f_\infty)$. 

Since $C_n(f_k,t_k) \to C_n(f_\infty,t_\infty)$ and $u_n(f_k,t_k) \to u_n(f_\infty,t_\infty)$, Rado's theorem implies that 
$h_n(f_k,t_k) \to h_n(f_\infty,t_\infty)$ uniformly.
This also implies that $[h_n(f_k,T(f_k))]^{-1} \to [h_n(f_\infty,T(f_\infty))]^{-1}$ invariably.

We now show continuity of $p_n$. 
First, consider the case where $t_k \leq T_1(f_k)$ for all $k$ sufficiently large. 
Then, $p_n(f_k,t_k) = p_{\mr{AL},n}(f_k,t_k)$, so $p_n(f_k,t_k) \to p_n(f_\infty,t_\infty)$. 
This also means that $p_n(f_k,T(f_k)) \to p_n(f_\infty,T(f_\infty))$.  
Next, consider the case where $t_k \leq T_1(f_k)$ for all $k$ sufficiently large. 
Then, 
\[p_n(f_k,t_k) = [h_n(f_k,t_k)\circ [h_n(f_k,T_1(f_k))]^{-1}](p_n(f_k,T_1(f_k))), \]
and $p_n(f_k,T_1(f_k)) \to p_n(f_\infty,T_1(f_\infty))$ and both $h_n(f_k,t_k)$ and $[h_n(f_k,T_1(f_k))]^{-1}$ converge invariably, 
so again we have $p_n(f_k,t_k) \to p_n(f_\infty,t_\infty)$. 
If neither of the above cases hold, then we can partition the sequence $p_n(f_k,t_k)$ depending on how $t_k$ compares to $T_1(f_k)$, and since $p_n(f_k,t_k)$ converges to $p_n(f_\infty,t_\infty)$ for each part, we have $p_n(f_k,t_k) \to p_n(f_\infty,t_\infty)$ in all cases.

Thus, $\famundsen(f_k,t_k) \to \famundsen(f_\infty,t_\infty)$ by Lemma \ref{lemma-siipointmap} and therefore $\rho(f_k,t_k) \to \rho(f_\infty,t_\infty)$ in the sup-metric.
\end{proof}

\begin{proof}[Proof of Lemma \ref{lemmauntangling} part \ref{itemequivariant-b}]
By Lemma \ref{lemmabalancing}, we have $\samundsen(Qfs,1) = Q\samundsen(f,1) $ and $C_n(Qfs,1) = QC_{sn}(f,1)$.
Hence, By Lemma \ref{lemma-siipointmap}, we have $\famundsen(Qfs,1) = Q\famundsen(f,1) \circ [z \mapsto sz^s] $ as in Equation \ref{equation-famundsen-equivariant}, 
so $L(Qfs) = sL(f)^s$,
so $\gamma_\mr{L}(Qfs;0) = Q\gamma_\mr{L}(f;0)$ and $\gamma_\mr{A}(Qfs;0) = Q\gamma_\mr{A}(f;0)$.
Since the curvature $\kappa\nu$ of a curve commutes with isometries of the sphere, we have 
for $t \in [0,\infty]_\mb{R}$ that 
$\gamma_\mr{L}(Qfs;t) = Q\gamma_\mr{L}(f;t)$ and $\gamma_\mr{A}(Qfs;t) = Q\gamma_\mr{A}(f;t)$, so 
for $t \in [1,2]_\mb{R}$ we have that 
$\samundsen(Qfs,t) = Q\samundsen(f,t)$ and $\sgagarin(Qfs,t) = Q\sgagarin(f,t)$.
Hence, $C_n(Qfs,t) = QC_{sn}(f,t)$ and $u_n(Qfs) = Qu_{sn}(f)$.

In the case where $Q \in \sorth_3$, both $Q$ and $Q^{-1}$ are conformal, 
so $Qh_{sn}(f,t)Q^{-1}$ is a conformal map. 
In the case where $Q \in -\sorth_3$, both $Q$ and $Q^{-1}$ are angle preserving and orientation reversing, 
so again $Qh_{sn}(f,t)Q^{-1}$ is a conformal map. 
In both cases, we have a conformal map  
from $C_{n}(Qfs,2)$ to $C_{n}(Qfs,t)$ that is fixed at $u_n(Qfs)$.
By the chain rule, 
\begin{align*}
\mr{D}(Qh_{sn}(f,t)Q^{-1};u_n(Qfs)) 
&= \mr{D}(Q;h_{sn}(f,t)(u_{sn}(f))) \circ \mr{D}(h_{sn}(f,t);u_{sn}(f)) \circ \mr{D}(Q^{-1};Qu_{sn}(f)) \\
&= Q \circ \lambda \id \circ Q^{-1} \\
&= \lambda \id
\end{align*}
where $\lambda >0$ is as given in the definition of $h_{sn}$.
Hence, for all case of $Q \in \orth_3$ we have $h(Qfs,t) = Qh_{sn}(f,t)Q^{-1}$. 
Therefore, $r_\mr{h}(Qfs,t) = r_\mr{h}(f,t)$ and $r_\mr{u}(Qfs,t) = r_\mr{u}(f,t)$, so $w(Qfs,t) = w(f,t)$, which implies that $T_1(Qfs) = T_1(f)$. 

By Lemma \ref{lemmabalancing}, we also have $p_n(Qfs,1) = Qp_{sn}(f,1)$ for $n \in \{1,-1\}$,
so $p_n(Qfs,t) = Qp_{sn}(f,t)$ in the case where $t \in [1,T_1(f)]_\mb{R}$ since 
$\samundsen(Qfs,t) = Q\samundsen(f,t)$ and $\sgagarin(Qfs,t) = Q\sgagarin(f,t)$,
and likewise in the case where $t \in [T_1(f),2]_\mb{R}$ since 
\begin{align*}
p_n(Qfs,t) 
&= [h_n(Qfs,t) [h_n(Qfs,T_1(Qfs))]^{-1}](p_n(Qfs,T_1(Qfs))). \\
&= [Qh_{sn}(f,t)Q^{-1}Q[h_{sn}(f,T_1(f))]^{-1}Q^{-1}](Qp_{sn}(f,T_1(f))). \\
&= Q[h_{sn}(f,t)[h_{sn}(f,T_1(f))]^{-1}](p_{sn}(f,T_1(f))). \\
&= Qp_{sn}(f,t)
\end{align*}

Therefore, by lemma \ref{lemma-siipointmap}, 
$\famundsen(Qfs,t) = Q\famundsen(f,t)\circ [z\mapsto sz^s]$, so 
\begin{align*}
\rho(Qfs,t) 
&= \famundsen(Qfs,t) \circ \famundsen(Qfs,1)^{-1} \circ \rho(Qfs,1) \\
&= Q\famundsen(f,t) \circ [z\mapsto sz^s] \circ [z\mapsto sz^s]^{-1} \circ [\famundsen(f,1)]^{-1} Q^{-1} \circ Q \rho(f,1)s \\
&= Q\famundsen(f,t) \circ \famundsen(f,1)^{-1} \circ \rho(f,1)s \\
&= Q\rho(f,t)s  \qedhere
\end{align*}
\end{proof}

\begin{proof}[Proof of Lemma \ref{lemmauntangling} part \ref{itemstrong-b}]
Consider $f \in \orth_3$. By Lemma \ref{lemmabalancing}, we have $\rho(f,1) = f$.
Hence, $\samundsen(f,1) = \samundsen(f,0)$ is a great circle and $p_1(f,1) = p_1(f,0)$ and $p_{-1}(f,-1) = p_{-1}(f,0)$ are antipodes, 
so by Lemma \ref{lemma-iipointmap}, $\famundsen(f,1)$ is isometric to a stereographic projection, so $\sgagarin(f,1)$ is also a great circle.
Since great circles have no geodesic curvature, the evolution of a great circle by curvature flow is trivial, so 
$\samundsen(f,t)=\samundsen(f,1)$ and $\sgagarin(f,t)=\sgagarin(f,1)$, so 
$u_n(f,t) = p_{n2}(f,1)$ and $h_n(f,t) = \id$, so 
$r_\mr{h}(f,t) = 0$ and $r_\mr{u}(f,t) = 0$, so 
$w(f,t) = 1$, so $T_1(f) = \nicefrac32$.
Hence, $p_n(f,t) = p_n(f,1)$ for $n \in \{1,-1\}$; 
in the case of $t \leq \nicefrac32$ because $\samundsen(f,t)$ and $\sgagarin(f,t)$ remain constant in $t$, and in the case of $t > \nicefrac32$ because $h_n(f,t) = \id$. 
Therefore, $\famundsen(f,t) = \famundsen(f,1)$,
which implies that $\rho(f,t) = \rho(f,1) = f$.
\end{proof}

\subsection{Aligning}

In the third stage, we move the points $p_1$ and $p_{-1}$ to antipodal positions. 

For $t \in (2,3]_\mb{R}$, we define the following.
Let $\samundsen(f,t) = \samundsen(f,2)$. 
Let $p_1(f,t)$ move at uniform speed along the arc from $p_1(f,2)$ to $-p_{-1}(f,2)$ that avoids $-p_1(f,2)$ and reach the midpoint of this arc at $t=3$,
and analogously, 
let $p_{-1}(f,t)$ move at uniform speed along the arc from $p_{-1}(f,2)$ to $-p_{1}(f,2)$ that avoids $-p_{-1}(f,2)$ and reach the midpoint of this arc at $t=3$.  Let 
\[\rho(f,t) = \famundsen(f,t) \circ \famundsen(f,2)^{-1} \circ \rho(f,2). \]

\begin{lemma}\label{lemmaaxis}
The points $p_1(f,3),p_{-1}(f,3)$ are antipodes.
\end{lemma}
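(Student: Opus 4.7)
The plan is to show that the two arcs along which $p_1$ and $p_{-1}$ travel during the aligning stage are antipodal to each other as subsets of the great circle $\samundsen(f,2)$, whereupon the midpoints at which the points arrive at $t=3$ are themselves antipodal.

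First I would set $a = p_1(f,2)$ and $b = p_{-1}(f,2)$; these lie on the great circle $S = \samundsen(f,2)$ by Lemma \ref{lemmasamundsen}, and $a \neq b$ because $\rho(f,2)$ is a homeomorphism sending the distinct points $e_1, e_{-1}$ to $a, b$. The degenerate case $a = -b$ is immediate: the two prescribed arcs collapse to single points, so $p_1(f,3) = a = -b = -p_{-1}(f,3)$. In the generic case $\{a, -a, b, -b\}$ consists of four distinct points on $S$.

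The main step is to prove that the antipodal map $\sigma\colon x \mapsto -x$ sends the first arc $\alpha_1$ (from $a$ to $-b$ avoiding $-a$) onto the second arc $\alpha_2$ (from $b$ to $-a$ avoiding $-b$). The image $\sigma(\alpha_1)$ is an arc on $S$ from $-a$ to $b$, so it has the same endpoints as $\alpha_2$; since the two arcs of $S$ between these endpoints partition $S \setminus \{b, -a\}$ into two open arcs, it suffices to show that $a$ and $-b$ lie in the same open arc. I would verify this by parameterizing $S$ as $\theta \mapsto \cos(\theta) v_1 + \sin(\theta) v_2$ for an orthonormal basis of the plane containing $S$, with $b$ at angle $0$ and $-a$ at angle $\phi \in (0,2\pi)\setminus\{\pi\}$; then $a$ sits at $\phi + \pi \pmod{2\pi}$ and $-b$ at $\pi$, and a quick case split on $\phi < \pi$ versus $\phi > \pi$ shows that $a$ and $-b$ always fall on the same side of $\{b,-a\}$.

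Once $\sigma(\alpha_1) = \alpha_2$ is established, the conclusion is immediate: $\sigma$ is an isometry of $S$, so it sends the arc-length midpoint of $\alpha_1$ to that of $\alpha_2$, and these midpoints are, by construction, $p_1(f,3)$ and $p_{-1}(f,3)$ respectively. The only real obstacle is the bookkeeping in the main step — making sure that "arc avoiding $-a$" and "arc avoiding $-b$" really do describe the same arc once $\sigma$ has been applied — which reduces to the routine coordinate calculation sketched above; everything else is essentially definitional.
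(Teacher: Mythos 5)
Your proof is correct and follows essentially the same route as the paper: both dispose of the degenerate case $p_{-1}(f,2) = -p_1(f,2)$ first and then exploit the symmetry of the antipodal map on the great circle $\samundsen(f,2)$. The paper phrases this as an equality of signed angles (the signed angle from $p_1(f,2)$ to $-p_{-1}(f,2)$ equals that from $-p_1(f,2)$ to $p_{-1}(f,2)$, and each point travels half of it), whereas you phrase it as the antipodal isometry carrying one prescribed arc onto the other and hence midpoint to midpoint; these are the same observation.
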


\begin{lemma}\label{lemmaaligning} 
On $t \in [2,3]_\mb{R}$, the following hold.
\begin{enumerate}
\item \label{itemcontinuous-c}
In the sup-metric, $\rho(f,t)$ depends continuously on $f$ and $t$.
\item \label{itemequivariant-c}
For all $Q \in \orth_3$, and $s \in \{1,-1\}$, 
we have $\rho(Qfs,t) = Q\rho(f,t)s$. 
\item \label{itemstrong-c}
If $f \in \orth_3$, then $\rho(f,t) = f$.
\end{enumerate}
\end{lemma}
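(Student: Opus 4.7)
The plan is to prove the three parts of Lemma \ref{lemmaaligning} by bootstrapping from the corresponding parts of Lemma \ref{lemmauntangling} together with the properties of $\fsiip$ in Lemma \ref{lemma-siipointmap}. Throughout stage 3 the curve $\samundsen(f,t)$ is the fixed great circle $\samundsen(f,2)$ and the hemisphere $C_1(f,t) = C_1(f,2)$ is constant in $t$, so only the marker points $p_1,p_{-1}$ move along prescribed arcs of this great circle.

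For part \ref{itemcontinuous-c} (continuity), given $f_k\to f_\infty$ and $t_k\to t_\infty$, Lemma \ref{lemmauntangling} already provides continuity of all the data at $t=2$, so it suffices to show that the uniform-speed parameterization of the arc from $p_1(f,2)$ to $-p_{-1}(f,2)$ avoiding $-p_1(f,2)$ (and its analog for $p_{-1}$) depends continuously on $f$, then apply Lemma \ref{lemma-siipointmap} part \ref{item-siipointmapcontinuity} to get continuity of $\famundsen(f,t) = \fsiip([C_1,p_1,p_{-1}](f,t))$ and Lemma \ref{lemmasupconvergence} to get continuity of the composition $\famundsen(f,t)\circ\famundsen(f,2)^{-1}\circ\rho(f,2)$.

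For part \ref{itemequivariant-c} (equivariance), fix $Q\in\orth_3$ and $s\in\{1,-1\}$. By Lemma \ref{lemmauntangling}, $\rho(Qfs,2) = Q\rho(f,2)s$, hence $p_i(Qfs,2) = Qp_{si}(f,2)$ and $C_1(Qfs,2) = QC_s(f,2)$. Applying $Q$ to the arc from $p_s(f,2)$ to $-p_{-s}(f,2)$ avoiding $-p_s(f,2)$ produces exactly the arc defining $p_1(Qfs,t)$ (with the roles of $\pm 1$ swapped when $s=-1$), giving $p_{\pm 1}(Qfs,t) = Qp_{\pm s}(f,t)$. Lemma \ref{lemma-siipointmap} parts \ref{item-siipointmaporth} and \ref{item-siipointmapswap} then yield the analog of Equation \ref{equation-famundsen-equivariant}, namely $\famundsen(Qfs,t) = Q\famundsen(f,t)\circ[z\mapsto sz^s]$, and the $[z\mapsto sz^s]$ factors cancel in the composition defining $\rho(Qfs,t)$ exactly as in the proof of Lemma \ref{lemmauntangling} part \ref{itemequivariant-b}, producing $\rho(Qfs,t) = Q\rho(f,t)s$. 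Part \ref{itemstrong-c} is then immediate: if $f\in\orth_3$ then $\rho(f,2) = f$ gives $p_1(f,2)=f(e_1)$ and $p_{-1}(f,2)=-f(e_1)$ already antipodal, so the prescribed arc degenerates to a single point, $p_{\pm 1}(f,t)$ is constant in $t$, and $\famundsen(f,t) = \famundsen(f,2)$ forces $\rho(f,t)=f$.

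The main obstacle I anticipate is the continuity at the degenerate antipodal limit in part \ref{itemcontinuous-c}: if $p_1(f_k,2) \neq -p_{-1}(f_k,2)$ but both sequences converge to the same limiting point, then the selected arc is discontinuous as a set (the choice of ``avoiding $-p_1$'' forces a jump in which side of the great circle it occupies in the limit). The resolution is that the selection rule ``avoiding $-p_1$'' always picks the short arc between the two converging endpoints rather than the long arc going around through $-p_1$, and the diameter of this short arc goes to zero, so pointwise its uniform-speed parameterization converges to the constant path at the common limit point $p_1(f_\infty,2) = -p_{-1}(f_\infty,2)$, matching the prescription in the degenerate case by Lemma \ref{lemmaaxis}.
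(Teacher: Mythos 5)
Your proposal is correct and follows essentially the same route as the paper: bootstrap the data at $t=2$ from Lemma \ref{lemmauntangling}, observe that the uniform-speed point on the arc from $p_1$ to $-p_{-1}$ avoiding $-p_1$ depends continuously on its endpoints and on $t$, and push everything through Lemma \ref{lemma-siipointmap} and the composition/inversion continuity lemmas. Your treatment of the degenerate antipodal limit (the avoiding arc is always the minor arc, whose diameter shrinks to zero as the endpoints merge) is a correct and welcome elaboration of a point the paper's proof passes over with the bare assertion that the fractional point ``varies continuously.''
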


\begin{proof}[Proof of Lemma \ref{lemmaaxis}]
In the case where $p_{-1}(f,2) = -p_1(f,2)$, both points $p_{1}(f,t)$ and $p_{-1}(f,t)$ remain constant, so $p_1(f,3),p_{-1}(f,3)$ are antipodes.
Otherwise, the signed angle $\theta$ from $p_1(f,2)$ to $-p_{-1}(f,2)$ is the same as the signed angle from $-p_{1}(f,2)$ to $p_{-1}(f,2)$, and the signed angle from $p_1(f,2)$ to $p_1(f,3)$ as well as that from $-p_1(f,2)$ to $p_{-1}(f,3)$ is $\nicefrac\theta2$, so again the points $p_1(f,3),p_{-1}(f,3)$ are antipodes. 
\end{proof}

\begin{proof}[Proof of Lemma \ref{lemmaaligning} part \ref{itemcontinuous-c}]
Consider $f_k \in \hom(\sphere^2)$ and $t_k \in [2,3]_\mb{R}$ such that $f_k \to f_\infty$ uniformly and $t_k \to t_\infty$.
By Lemma \ref{lemmauntangling}, $\samundsen(f_k,t_k) = \samundsen(f_k,2) \to \samundsen(f_\infty,t_\infty)$ and $C_1(f_k,t_k) \to C_1(f_\infty,t_\infty)$. Also, the point that is a $((t-2)/2)$-fraction of the way along the arc from $p_1$ to $-p_{-1}$ varies continuously as a function of $p_1$, $-p_{-1}$, and $t$.  Hence, $p_1(f_k,t_k) \to p_{1}(f_\infty,t_\infty)$ and likewise for $p_{-1}$, so by Lemma \ref{lemma-siipointmap} $\famundsen(f_k,t_k) \to \famundsen(f_\infty,t_\infty)$ uniformly, so by Lemma \ref{lemma-invariable-convergence}, $[\famundsen(f_k,2)]^{-1} \to [\famundsen(f_\infty,2)]^{-1}$ uniformly, and therefore 
$\rho(f_k,t_k) \to \rho(f_\infty,t_\infty)$. 
\end{proof}

\begin{proof}[Proof of Lemma \ref{lemmaaligning} part \ref{itemequivariant-c}]
By Lemma \ref{lemmauntangling} we have $\samundsen(Qfs,t) = \samundsen(Qfs,2) = Q\samundsen(f,2) = Q\samundsen(f,t)$ and $C_1(Qfs,t) = QC_{s}(f,t)$.  Also, $p_1(Qfs,2) = Qp_s(f,2)$ and $-p_{-1}(Qfs,2) = -Qp_{-s}(f,2)$, so the point $p_1(Qfs,t)$ that is a $((t-2)/2)$-fraction of the way along the arc from $p_1(Qfs,2)$ to $-p_{-1}(Qfs,2)$ is also the image by $Q$ of the point $p_s(f,t)$ that is a $((t-2)/2)$-fraction of the way along the arc from $p_s(f,2)$ to $-p_{-s}(f,2)$, which means that $p_1(Qfs,t) = Qp_s(f,t)$ and likewise for $p_{-1}$. 
Therefore, by Lemma \ref{lemma-siipointmap}, $\famundsen(Qfs,t) = Q\famundsen(f,t) \circ [z \mapsto sz^s]$, 
which implies that $\rho(Qfs,t) = Q\rho(f,t)s$. 
\end{proof}

\begin{proof}[Proof of Lemma \ref{lemmaaligning} part \ref{itemstrong-c}]
Suppose $f \in \orth_3$. 
Then, by Lemma \ref{lemmauntangling}, $\rho(f,2) = f \in \orth_3$, so $p_1(f,2)$ and $p_{-1}(f,2)$ are antipodes, so $p_1(f,t) = p_1(f,2)$ and $p_{-1}(f,t) = p_{-1}(f,2)$ are constant in $t$, so $\famundsen(f,t) = \famundsen(f,2)$ is constant in $t$, so $\rho(f,t) = \rho(f,2) = f$. 
\end{proof}

\subsection{Flattening} 

In the fourth stage, we deform $\smagellan$ to a great circle. 
In each hemisphere, we map $\smagellan$ to a curve $\sorpheus$ in the disk, 
where we deform $\sorpheus$ to a line segment, 
and we let $\smagellan$ be the preimage of $\sorpheus$. 
We deform $\sorpheus$ to a segment by continuously shrinking $\sorpheus$ to the origin and connecting the end points to the boundary of the disk by segments; see Figure \ref{figureThreeFour}.

For $t \in (3,4]_\mb{R}$, we define the following.
Let $p_i(f,t) = p_i(f,3)$ for $i \in \{1,-1\}$ and 
$r(t) = 4-t$. 
Observe $r$ sends $[3,4]_\mb{R}$ to $[1,0]_\mb{R}$. Let 
\begin{align*}
\sorpheus(f,3) &= [\famundsen(f,3)]^{-1}(\smagellan(f,3)), \\
\sorpheus(f,t) &= \bigcup \left\{ \begin{array}{r@{\hspace{12pt}}l@{\vspace{3pt}}}
r(t)(\sorpheus(f,3)\cap\mathbf{D}), &
\frac{1}{r(t)}(\sorpheus(f,3)\setminus\mathbf{D}), \\
\left[\frac{-1}{r(t)},-r(t)\right]_\mb{R} , &
\left[r(t),\frac{1}{r(t)}\right]_\mb{R}  \\
\end{array} \right\}, \\ 
\smagellan(f,t) &= [\famundsen(f,3)]( \sorpheus(f,t) ), \\
p_2(f,t) &= \left[\famundsen(f,3)\circ r(t)[\famundsen(f,3)]^{-1}\right](p_2(f,3)), \\
p_{-2}(f,t) &= \left[\famundsen(f,3) \circ \tfrac{1}{r(t)}[\famundsen(f,3)]^{-1}\right](p_{-2}(f,3)), \\
\rho(f,t) &= \fmagellan(f,t) \circ \fmagellan(f,3)^{-1} \circ \rho(f,3). 
\end{align*}

\begin{lemma}\label{lemmasmagellan}
The curve $\smagellan(f,4)$ is a great circle,
and $\rho(f,t)$ is well defined for $t \in (3,4]$. 
\end{lemma}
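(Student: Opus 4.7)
The plan is to identify $\famundsen(f,3)$ as an isometric image of stereographic projection, which pins down the limiting curve $\smagellan(f,4)$, and then to verify that each intermediate $\sorpheus(f,t)$ is a simple closed curve carrying the four distinguished points in the correct cyclic order.

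By Lemmas \ref{lemmasamundsen} and \ref{lemmaaligning}, $\samundsen(f,3)$ is a great circle with antipodal distinguished points $p_1(f,3), p_{-1}(f,3)$. I would choose $Q \in \sorth_3$ with $Q(C_3, e_1, e_{-1}) = (C_1(f,3), p_1(f,3), p_{-1}(f,3))$; such a $Q$ exists because $\orth_3$ acts transitively on such tuples and the stabilizer of $(C_3, e_1, e_{-1})$ contains an orientation-reversing element (the reflection across the plane $\spn(e_1, e_3)$). By parts \ref{item-siipointmaporth} and \ref{item-siipointmapstereo} of Lemma \ref{lemma-siipointmap}, $\famundsen(f,3) = Q \circ \stereo_{-3}$, where $\stereo_{-3}$ denotes stereographic projection through $e_{-3}$. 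Since $\stereo_{-3}(\overline{\mb{R}})$ is the great circle $\sphere^2 \cap \spn(e_1,e_3)$, the image $\famundsen(f,3;\overline{\mb{R}}) = Q(\stereo_{-3}(\overline{\mb{R}}))$ is a great circle.

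I next establish by induction through the earlier stages that $\smagellan(f,3) \cap \samundsen(f,3) = \{p_1(f,3), p_{-1}(f,3)\}$. At $t = 0$, the defining equations $\samundsen(f,0) = \fmagellan(f,0;\overline{\mb{R}})$ and $\smagellan(f,0) = \fmagellan(f,0;\sphere^1)$ together with the injectivity of $\fmagellan(f,0)$ give the intersection $\fmagellan(f,0;\{\pm 1\}) = \{p_1(f,0), p_{-1}(f,0)\}$. In stage 1, Lemma \ref{lemmabalancing} yields $\rho(f,t) = \omega(f,tT(f)) \circ f$, so $\smagellan$ and $\samundsen$ are both transported by the common homeomorphism $\omega(f, tT(f))$, which preserves the intersection. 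In stages 2 and 3, the formula $\rho(f,t) = \famundsen(f,t) \circ \famundsen(f,t')^{-1} \circ \rho(f,t')$ (for the previous time $t'$) combined with the fact that $\famundsen(f,s)^{-1}$ always carries $\samundsen(f,s)$ onto $\sphere^1$ implies that $\famundsen(f,t) \circ \famundsen(f,t')^{-1}$ simultaneously sends $\samundsen(f,t')$ to $\samundsen(f,t)$ and $\smagellan(f,t')$ to $\smagellan(f,t)$ while mapping $p_{\pm 1}(f,t')$ to $p_{\pm 1}(f,t)$, preserving the intersection set across each stage.

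Consequently $\sorpheus(f,3) = [\famundsen(f,3)]^{-1}(\smagellan(f,3))$ is a simple closed curve meeting $\sphere^1$ exactly at $\pm 1$, splitting into a simple interior arc in $\mathbf{D}$ from $1$ to $-1$ passing through $z_2 := [\famundsen(f,3)]^{-1}(p_2(f,3)) \in \mathbf{D}^\circ$ and a simple exterior arc in $\overline{\mb{C}} \setminus \mathbf{D}^\circ$ passing through $z_{-2} := [\famundsen(f,3)]^{-1}(p_{-2}(f,3))$, with the interior and exterior placements following from $p_{\pm 2}(f,3) \in C_{\pm 1}(f,3)$. For $t \in (3,4)$ with $r(t) \in (0,1)$, the four pieces of $\sorpheus(f,t)$ live in the closed regions $r(t)\mathbf{D}$, $\overline{\mb{C}} \setminus r(t)^{-1}\mathbf{D}^\circ$, and the two real segments of the annulus $\{r(t) \le |z| \le r(t)^{-1}\}$, which are pairwise interior-disjoint and meet only at the four common boundary points $\pm r(t), \pm r(t)^{-1}$, so their union is a simple closed curve; the four preimages $1, r(t)z_2, -1, r(t)^{-1}z_{-2}$ each lie on a different piece and appear in this cyclic order inherited from $\sorpheus(f,3)$, so under the orientation-preserving homeomorphism $\famundsen(f,3)$ the points $p_1, p_2, p_{-1}, p_{-2}$ are distinct and counter-clockwise around $\smagellan(f,t)$ as viewed from outside $\sphere^2$, making $\fmagellan(f,t)$ defined by Lemma \ref{lemma-sivpointmap} and hence $\rho(f,t)$ defined as well. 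Finally, at $t = 4$ the four pieces degenerate to $\{0\}$, $\{\infty\}$, $[0,\infty]_\mb{R}$, and $[-\infty, 0]_\mb{R}$, whose union is $\overline{\mb{R}}$, so $\smagellan(f,4) = \famundsen(f,3;\overline{\mb{R}})$, which is the great circle identified in the second paragraph. The main obstacle is the intersection-tracking argument of the third paragraph; the rest is a routine unpacking of the scaling construction in the annulus.
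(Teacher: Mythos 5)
Your proof is correct and follows essentially the same route as the paper's: decompose $\sorpheus(f,t)$ into the two scaled arcs and two real segments, check the cyclic order of $p_1,p_2,p_{-1},p_{-2}$ via the interior/exterior placement of the preimages of $p_{\pm 2}$, and identify $\smagellan(f,4)=\famundsen(f,3;\overline{\mb{R}})$ as a great circle using Lemmas \ref{lemmasamundsen}, \ref{lemmaaxis}, and \ref{lemma-siipointmap}. The only difference is that you explicitly track $\smagellan\cap\samundsen=\{p_1,p_{-1}\}$ through the earlier stages, a fact the paper simply asserts; that added detail is sound.
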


\vbox{
\begin{lemma}\label{lemmaflattening} 
On $t \in [3,4]_\mb{R}$, the following hold.
\begin{enumerate}
\item \label{itemcontinuous-d}
In the sup-metric, $\rho(f,t)$ depends continuously on $f$ and $t$.
\item \label{itemequivariant-d}
For all $Q \in \orth_3$, and $s \in \{1,-1\}$, 
we have $\rho(Qfs,t) = Q\rho(f,t)s$. 
\item \label{itemstrong-d}
If $f \in \orth_3$, then $\rho(f,t) = f$.
\end{enumerate}
\end{lemma}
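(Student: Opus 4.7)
The plan is to track how each ingredient ($\sorpheus$, $\smagellan$, the marked points $p_{\pm 2}$, and $\fmagellan$) transforms through the fourth stage, invoking the properties of $\fsiip$ and $\fsivp$ from Section \ref{sectionParameterizations} together with the inductive input from Lemma \ref{lemmaaligning}. That earlier lemma supplies, at $t=3$, continuity of $\rho(\cdot,3)$, the $(Q,s)$-equivariance $\rho(Qfs,3)=Q\rho(f,3)s$ together with $p_i(Qfs,3)=Qp_{si}(f,3)$, and $\orth_3$-fixity; combined with Lemma \ref{lemma-siipointmap} one also has $\famundsen(Qfs,3)=Q\famundsen(f,3)\circ[z\mapsto sz^s]$.

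For continuity (part \ref{itemcontinuous-d}), Lemmas \ref{lemma-siipointmap} and \ref{lemma-invariable-convergence} give that $\sorpheus(f,3)=[\famundsen(f,3)]^{-1}(\smagellan(f,3))$ depends continuously on $f$ in Fréchet distance, and the scalings $z\mapsto r(t)z$ and $z\mapsto z/r(t)$ are jointly continuous in $(t,z)$ on $\overline{\mb{C}}$ in the spherical metric. At $t=4$ the scaled inner and outer arcs collapse into spherical disks of radius $O(r(t))$ around $0$ and $\infty$ while the two real segments fill out $\overline{\mb{R}}$; I would exhibit explicit homeomorphisms $\phi_t\colon\sphere^1\to\sorpheus(f,t)$ that agree with a fixed parameterization $\phi_\infty=\fswivel$ on the segment intervals and allocate parameter length $O(r(t))$ to each shrinking arc, yielding a sup-metric bound of $O(r(t))\to 0$ and hence Fréchet convergence $\sorpheus(f,t)\to\overline{\mb{R}}$. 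Pushing forward by the uniformly continuous $\famundsen(f,3)$ gives Fréchet continuity of $\smagellan(f,t)$; the formulas for $p_{\pm 2}(f,t)$ remain continuous at $t=4$ by the same spherical argument; and Lemmas \ref{lemma-sivpointmap} and \ref{lemmasupconvergence} then deliver continuity of $\fmagellan(f,t)$ and of $\rho(f,t)=\fmagellan(f,t)\circ[\fmagellan(f,3)]^{-1}\circ\rho(f,3)$.

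For equivariance (part \ref{itemequivariant-d}), the identities above give $\sorpheus(Qfs,3)=[z\mapsto sz^s]^{-1}(\sorpheus(f,3))$. When $s=-1$ the involution $z\mapsto -1/z$ swaps the interior and exterior of $\mathbf{D}$ while interchanging the scaling factors $r(t)$ and $1/r(t)$ on the two real segments, so it commutes with the four-piece construction to give $\sorpheus(Qfs,t)=[z\mapsto sz^s]^{-1}(\sorpheus(f,t))$ for every $t\in[3,4]$. Pushing through $\famundsen(f,3)$ yields $\smagellan(Qfs,t)=Q\smagellan(f,t)$; together with the fixed $p_{\pm 1}(\cdot,t)=p_{\pm 1}(\cdot,3)$ and a direct scaling computation for $p_{\pm 2}$ one gets $p_i(Qfs,t)=Qp_{si}(f,t)$ for every $i\in\{\pm 1,\pm 2\}$. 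Lemma \ref{lemma-sivpointmap} (combining its $\orth_3$-equivariance with its index-swap identity) then produces $\fmagellan(Qfs,t)=Q\fmagellan(f,t)\circ[z\mapsto sz]$, and since $[z\mapsto sz]$ is an involution in $s\in\{1,-1\}$, its two occurrences cancel in the formula for $\rho$, yielding $\rho(Qfs,t)=Q\rho(f,t)s$. For strong retraction (part \ref{itemstrong-d}), $f\in\orth_3$ makes $\samundsen(f,3)=f(e_2^\bot)$ a great circle with antipodal marked points $fe_{\pm 1}$, so Lemma \ref{lemma-siipointmap} factors $\famundsen(f,3)$ as $fR$ composed with stereographic projection through $e_{-3}$ for an appropriate $R\in\sorth_3$, and a direct computation gives $[\famundsen(f,3)]^{-1}(\smagellan(f,3))=\overline{\mb{R}}$; hence $\sorpheus(f,t)=\overline{\mb{R}}$ and $p_{\pm 2}(f,t)$ are the constant images $\famundsen(f,3)(0)$ and $\famundsen(f,3)(\infty)$, so $\fmagellan(f,t)=\fmagellan(f,3)$ and the formula for $\rho$ collapses to $\rho(f,t)=f$.

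The main obstacle is the degenerate limit at $t=4$: two of the four pieces of $\sorpheus(f,t)$ shrink to the points $0$ and $\infty$, so the natural parameterization ceases to be a homeomorphism in the limit. The spherical diameter bound $O(r(t))$ on each shrinking piece, combined with the allocation of parameter length $O(r(t))$ to each such piece, delivers the uniform Fréchet estimate that then propagates through $\famundsen(f,3)$, through $\fsivp$, and through the composition defining $\rho$.
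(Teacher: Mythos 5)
Your proposal is correct and follows essentially the same route as the paper's proof: propagate convergence, equivariance, and $\orth_3$-fixity from Lemma \ref{lemmaaligning} through $\sorpheus$, $\smagellan$, $p_{\pm 2}$, and $\fmagellan$ using Lemmas \ref{lemma-siipointmap}, \ref{lemma-sivpointmap}, and \ref{lemma-invariable-convergence}, then cancel the $[z\mapsto sz^s]$ factors in the composition defining $\rho$. Your explicit $O(r(t))$ Fréchet estimate for the collapsing arcs at $t=4$ is a welcome elaboration of a step the paper dispatches with the remark that each piece of $\sorpheus$ is composed of continuous functions.
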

}

\begin{proof}[Proof of Lemma \ref{lemmasmagellan}]
Since $\smagellan(f,3)$ and $\samundsen(f,3)$ intersect in exactly the pair of points $\{p_1,p_{-1}\}(f,3)$, the curve $\sorpheus(f,3)$ consists of a pair of curves from $1$ to $-1$; one going through $\mathbf{D}$ and the other going through $\overline{\mb{C}\setminus\mathbf{D}}$.
Therefore, $\sorpheus(f,t)$ consists of a curve from $-r(t)$ to $r(t)$ through $r(t)\mathbf{D}$, and a segment from $r(t)$ to $\frac1{r(t)}$, and a curve from $\frac1{r(t)}$ to $\frac{-1}{r(t)}$ through $\overline{\mb{C}\setminus\frac1{r(t)}\mathbf{D}}$, and a segment from $\frac{-1}{r(t)}$ back to $-r(t)$.
Hence, $\sorpheus(f,t)$ is a simple closed curve in $\overline{\mb{C}}$, so $\smagellan(f,t)$ is a simple closed curve. 

By definition, we have $p_2(f,3) \in \smagellan(f,3) \cap C_1(f,3)^\circ$, 
so $[\famundsen(f,3)]^{-1}(p_2(f,3)) \in \sorpheus(f,3) \cap \mathbf{D}^\circ$,
so $r(t)[\famundsen(f,3)]^{-1}(p_2(f,3)) \in r(t)(\sorpheus(f,t) \cap \mathbf{D}^\circ) \subset \sorpheus(f,t) \cap C_1(f,3)^\circ$,
so $p_2(f,t) \in \smagellan(f,t)\cap C_1(f,3)^\circ$.
Similarly, $p_{-2}(f,t) \in \smagellan(f,t)\cap C_{-1}(f,3)^\circ$.
Hence, the points $p_1,p_2,p_{-1},p_{-2}$ appear in that order around $\smagellan$, so 
$\fmagellan(f,t)$ is well defined, and therefore $\rho(f,t)$ is well defined.

The curve $\samundsen(f,4) = \samundsen(f,2)$ is a great circle by Lemma \ref{lemmasamundsen},
and the points $p_1(f,4) = p_1(f,3)$ and $p_{-1}(f,4) = p_{-1}(f,3)$ are antipodal by Lemma \ref{lemmaaxis}.
Therefore, by Lemma \ref{lemma-siipointmap}, $\famundsen(f,4)$ is isometric to a stereographic projection, 
and by definition $\sorpheus(f,4) = \mb{R}$, so $\smagellan(f,4) = \famundsen(f,4;\sorpheus(f,4))$ is a great circle. 
\end{proof}

\begin{proof}[Proof of Lemma \ref{lemmaflattening} part \ref{itemcontinuous-d}]
Consider $f_k \in \hom(\sphere^2)$ and $t_k \in [3,4]_\mb{R}$ such that $f_k \to f_\infty$ uniformly and $t_k \to t_\infty$.
By Lemma \ref{lemmaaligning}, $\rho(f_k,3) \to \rho(f_\infty,3)$ uniformly, so $p_i(f_k,3) \to p_i(f_\infty,3)$ and $\smagellan(f_k,3) \to \smagellan(f_\infty,3)$ and $\samundsen(f_k,3) \to \samundsen(f_\infty,3)$ in Fréchet distance, so by Lemma \ref{lemma-siipointmap}, $\famundsen(f_k,3) \to \famundsen(f_\infty,3)$ uniformly, so by Lemma \ref{lemma-invariable-convergence}, $[\famundsen(f_k,3)]^{-1} \to [\famundsen(f_\infty,3)]^{-1}$ uniformly, and therefore $\sorpheus(f_k,3) = [\famundsen(f_k,3)]^{-1}(\smagellan(f_k,3)) \to \sorpheus(f_\infty,3)$ in Fréchet distance.

Each part of $\sorpheus$ is composed of continuous functions, so $\sorpheus(f_k,t_k) \to \sorpheus(f_\infty,t_\infty)$ in Fréchet distance, and $\famundsen(f,3)$ is uniformly continuous by the Heine-Cantor theorem, so $\smagellan(f_k,t_k) \to \smagellan(f_\infty,t_\infty)$ in Fréchet distance.
Also, $p_n(f_k,t_k) \to p_n(f_\infty,t_\infty)$ for $n \in \{1,-1,2,-2\}$, so by Lemma \ref{lemma-sivpointmap}, $\fmagellan(f_k,t_k) \to \fmagellan(f_\infty,t_\infty)$ uniformly.
Therefore, $\rho(f_k,t_k) \to \rho(f_\infty,t_\infty)$ by Lemma \ref{lemma-invariable-convergence}. 
\end{proof}

\begin{proof}[Proof of Lemma \ref{lemmaflattening} part \ref{itemequivariant-d}]
By Lemma \ref{lemmaaligning}, $\rho(Qfs,3) = Q\rho(f,3)s$, so $\smagellan(Qfs,3) = Q\smagellan(f,3)$ and $\samundsen(Qfs,3) = Q\samundsen(f,3)$ and $p_i(Qfs,3) = p_{si}(f,3)$, so by Lemma \ref{lemma-siipointmap}, $\famundsen(Qfs,3) = Q\famundsen(f,3)\circ [z \mapsto sz^s]$.
Hence, $\sorpheus(Qfs,3) = s\sorpheus(f,3)^s$. 
In the case where $s = 1$ we have $r(t)(\sorpheus(Qfs,t)\cap\mathbf{D}) = r(t)(\sorpheus(f,t)\cap\mathbf{D})$ and analogously the other parts of $\sorpheus$ are unchanged, so $\sorpheus(Qfs,t) = \sorpheus(f,t) = s\sorpheus(f,t)^s$. 
In the case where $s = -1$ we have $r(t)(\sorpheus(Qfs,t)\cap\mathbf{D}) = r(t)(s\sorpheus(f,t)^s\cap\mathbf{D}) = s(\frac1{r(t)}(\overline{\sorpheus(f,t)\setminus\mathbf{D}}))^s$, 
and the other parts of $\sorpheus$ are permuted and transformed analogously, so $\sorpheus(Qfs,t) = s\sorpheus(f,t)^s$. 
Hence, $\smagellan(Qfs,t) = Q\famundsen(f,3)\circ [z \mapsto sz^s](s\sorpheus(f,t)^s) = Q\smagellan(f,t)$,
and $p_2(Qfs,t) = Qp_{s2}(f,t)$ and analogously for $p_{-2},p_1,p_{-1}$, 
so by Lemma \ref{lemma-sivpointmap}, 
$\fmagellan(Qfs,t) = Q\fmagellan(f,t)s$.
Thus, $\rho(Qfs,t) = Q\rho(Q,t)s$. 
\end{proof}

\begin{proof}[Proof of Lemma \ref{lemmaflattening} part \ref{itemstrong-d}]
Suppose $f \in \orth_3$.  Then, by Lemma \ref{lemmaaligning}, $\rho(f,3) = f$, so $\smagellan(f,3) = f(e_3^\bot)$ is a great circle and the 4 points $p_n(f,3) = f(e_n)$ for $n \in \{1,2,-1,-2\}$ are spaced uniformly at right angles around $\smagellan(f,3)$.
By Lemma \ref{lemma-sivpointmap}, $\fmagellan(f,3)$ is $f$ composed with stereographic projection through $e_{-3}$, so $\samundsen(f,3)$ is a great circle that is perpendicular to $\smagellan(f,3)$, 
so by Lemma \ref{lemma-siipointmap}, $\famundsen(f,3)$ is isometric to stereographic projection. 
Since the map $\famundsen(f,3)^{-1}$ sends $\samundsen(f,3)$ to $\sphere^1 \subset \mb{C}$ and sends the point $p_1$ to $1$, 
the map $\famundsen(f,3)^{-1}$ sends $\smagellan(f,3)$ to the line in $\overline{\mb{C}}$ that passes though $\sphere^1$ at $1$ at a right angle.
That is, $\famundsen(f,3)^{-1}$ sends $\smagellan(f,3)$ to $\overline{\mb{R}}$. 
Hence, $\sorpheus(f,t) = \overline{\mb{R}}$ for $t \in [3,4]_\mb{R}$, so $\smagellan(f,t) = f(e_3^\bot)$.
Also, $p_2(f,3)$ is orthogonal to $p_1(f,3)$, so $[\famundsen(f,3)]^{-1}(p_2(f,3)) = 0$, so $p_2(f,t) = \famundsen(f,3;0) = p_2(f,3)$, and similarly $p_{-2}(f,t) = p_{-2}(f,3)$, so $\fmagellan(f,t) = \fmagellan(f,3)$.
Therefore, $\rho(f,t) = \rho(f,3) = f$.
\end{proof}

\subsection{Divvying}

In the fifth stage, we deform the map to an isometry on the equator by linear interpolation in polar coordinates; see Figure \ref{figureFiveSix}. 

For $t \in (4,5]_\mb{R}$, we define the following.
Let $Q = Q_\mr{fin}(f) \in \orth_3$ such that $Qe_1 = p_1(f,4)$, $Qe_2 = p_2(f,4)$, and $Q$ has the same orientation as $f$. 
Eventually we will deformation retract to $Q$.  

Let 
\[ \rho(f,t;x) = [\fmagellan(f,4)] \left( z_1^{4-t} z_0^{5-t} \right) \]
where 
\begin{align*}
z_0 = \zeta_0(f;x) &= [[\fmagellan(f,4)]^{-1}\circ \rho(f,4)](x), \\
z_1 = \zeta_1(f;x) &= 
\begin{cases} 
z_0 \text{ if } z_0 \in \{0,\infty\} \\
|z_0| [[\fmagellan(f,4)]^{-1}\circ Q_\mr{fin}(f)\circ [\rho(f,4)]^{-1}\circ \fmagellan(f,4)] \left(\nicefrac{z_0}{|z_0|}\right)
 \text{ else}, 
\end{cases} \\ 
z_1^{4-t} z_0^{5-t} &= \mr{e}^{(4-t)\log(z_1)+(5-t)\log(z_0)}
\end{align*}
with the branch of the log with range $(-\infty,\infty)_\mb{R} + \im[0,2\pi)_\mb{R}$. 
Note that the cases in the definition of $z_1$ coincide if we simply regard $0$ or $\infty$ times an undefined unit complex number as $0$ or $\infty$ respectively.

\begin{lemma}\label{lemmaequatorial}
For $t \in (4,5]_\mb{R}$, $\rho(f,t) \in \hom(\sphere^2)$. 
Also, for $x \in e_3^\bot$, $\rho(f,5;x) = [Q_\mr{fin}(f)](x)$. 
\end{lemma}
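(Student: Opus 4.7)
The plan is to analyze the deformation through the coordinate system $\fmagellan(f,4)$. Setting $\eta := \fmagellan(f,4)^{-1} \circ Q_\mr{fin}(f) \circ \rho(f,4)^{-1} \circ \fmagellan(f,4)$, the formula writes $\rho(f,t;x)$ as $\fmagellan(f,4)$ applied to an interpolation in $\overline{\mb{C}}$ from $z_0 = \fmagellan(f,4)^{-1}(\rho(f,4)(x))$ at $t=4$ to $z_1 = |z_0| \cdot \eta(z_0/|z_0|)$ at $t=5$ along level sets of $|z_0|$. The first step is to verify that $\eta$ restricts to an orientation-preserving self-homeomorphism of $\sphere^1$ with $\eta(\pm 1) = \pm 1$.

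For $\eta$ to send $\sphere^1$ to itself one needs $Q_\mr{fin}(f)(e_3^\bot) = \smagellan(f,4)$, which reduces to showing $p_1(f,4) \perp p_2(f,4)$. By Lemma \ref{lemmasamundsen} and Lemma \ref{lemmaaxis}, $\samundsen(f,3)$ is a great circle containing the antipodal pair $p_{\pm 1}(f,4) = p_{\pm 1}(f,3)$, so $C_1(f,3)$ is a hemisphere, and by the symmetry of $\fdiip$ applied to a hemisphere with $p_1, p_{-1}$ antipodal on its boundary, the bisector parameter $r$ equals $1$ and $0 \in \mathbf{D}$ is sent to the pole of the hemisphere. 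Hence $p_2(f,4) = \famundsen(f,3;0)$ is the pole of $C_1(f,3)$ and is orthogonal to $p_1(f,4)$. Given this, $Q_\mr{fin}(f) \in \orth_3$ is well defined, $Q_\mr{fin}(f)(e_3) = \pm p_1(f,4) \times p_2(f,4)$ is perpendicular to $\smagellan(f,4)$, and the composition $\sphere^1 \to \smagellan(f,4) \to e_3^\bot \to \smagellan(f,4) \to \sphere^1$ makes $\eta$ a self-homeomorphism of $\sphere^1$. The fixed points at $\pm 1$ then follow from $\fmagellan(f,4)^{-1}(p_{\pm 1}(f,4)) = \pm 1$ (Lemma \ref{lemma-sivpointmap}) combined with $Q_\mr{fin}(f)(e_{\pm 1}) = p_{\pm 1}(f,4) = \rho(f,4)(e_{\pm 1})$.

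With these facts in place, I would expand the formula in polar coordinates. Writing $z_0 = r e^{\im \theta_0}$ with $\theta_0 \in [0, 2\pi)$, the construction gives $|z_1| = r$ and $z_1 = r e^{\im \theta_1(\theta_0)}$, so the coordinate becomes $r\, e^{\im((t-4)\theta_1 + (5-t)\theta_0)}$, a radial map preserving $|z_0|$ and linearly interpolating angles. For each $t \in [4,5]$ this is a self-homeomorphism of $\overline{\mb{C}}$: since $\eta$ is orientation-preserving with $\eta(1) = 1$, the lift $\theta_1$ is a strictly increasing homeomorphism of $[0, 2\pi)$ fixing $0$, and hence so is the convex combination for every $t$. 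Composing with $\fmagellan(f,4)$ yields $\rho(f,t) \in \hom(\sphere^2)$. For the endpoint identity, take $x \in e_3^\bot$: then $\rho(f,4)(x) \in \smagellan(f,4)$, so $z_0 \in \sphere^1$ and $|z_0| = 1$, and unpacking $\eta$ gives $z_1 = \eta(z_0) = \fmagellan(f,4)^{-1}(Q_\mr{fin}(f)(x))$. At $t=5$ the formula reduces to $z_1$, so $\rho(f,5;x) = \fmagellan(f,4)(z_1) = Q_\mr{fin}(f)(x)$.

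The main obstacle is verifying global continuity in the presence of the branch cut of the chosen logarithm along the positive real axis: as $z_0$ crosses this cut, $\log z_0$ jumps by $2\pi\im$, and the expression $(t-4)\log z_1 + (5-t)\log z_0$ is continuous only if $\log z_1$ jumps by the same amount at the same $x$, which requires $z_1$ to lie on the positive real axis exactly when $z_0$ does. This compatibility is precisely the condition $\eta(1) = 1$, which is where the careful alignment achieved in Stages 3 and 4 becomes essential. A secondary point requiring attention is the orthogonality $p_1(f,4) \perp p_2(f,4)$ needed for $Q_\mr{fin}(f)$ to exist in $\orth_3$, sketched above via the hemisphere symmetry of $\fdiip$.
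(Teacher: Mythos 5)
Your proof is correct and follows essentially the same route as the paper: conjugate by $\fmagellan(f,4)$, observe that the interpolated phase is a convex combination of two strictly increasing circle homeomorphisms fixing $1$ (hence again a circle homeomorphism), note that the magnitude is preserved so the map extends to a self-homeomorphism of $\overline{\mb{C}}$, and evaluate at $t=5$ on $z_0\in\sphere^1$ to get $Q_\mr{fin}(f)$. You additionally verify two points the paper leaves implicit -- the orthogonality $p_1(f,4)\perp p_2(f,4)$ needed for $Q_\mr{fin}(f)\in\orth_3$ to exist (via the pole of the hemisphere $C_1(f,3)$) and the branch-cut compatibility of the logarithms, which indeed reduces to $\eta(1)=1$ -- and both checks are sound.
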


\vbox{
\begin{lemma}\label{lemmadivvying} 
On $t \in [4,5]_\mb{R}$, the following hold.
\begin{enumerate}
\item \label{itemcontinuous-e}
In the sup-metric, $\rho(f,t)$ depends continuously on $f$ and $t$.
\item \label{itemequivariant-e}
For all $Q \in \orth_3$ and $s \in \{1,-1\}$, 
we have $\rho(Qfs,t) = Q\rho(f,t)s$. 
\item \label{itemstrong-e}
If $f \in \orth_3$, then $\rho(f,t) = f$.
\end{enumerate}
\end{lemma}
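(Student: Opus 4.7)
The plan rests on one structural observation about the auxiliary map
\[
\phi(f)\dfeq \fmagellan(f,4)^{-1}\circ Q_\mr{fin}(f)\circ \rho(f,4)^{-1}\circ \fmagellan(f,4).
\]
By Lemma \ref{lemma-sivpointmap} together with Lemmas \ref{lemmasmagellan} and \ref{lemmaflattening}, each of $\fmagellan(f,4)$, $\rho(f,4)$, and $Q_\mr{fin}(f)$ carries the four marked points (either $1,\im,-1,-\im$ or $e_1,e_2,e_{-1},e_{-2}$) to $p_1,p_2,p_{-1},p_{-2}(f,4)$. Hence the restriction of $\phi(f)$ to $\sphere^1$ is an orientation-preserving self-homeomorphism of $\sphere^1$ that fixes $\{1,\im,-1,-\im\}$, and so preserves each open quadrant and each open half-plane. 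Writing $z_0=re^{i\theta_0}$ and $z_1=re^{i\theta_1}$ with $\theta_0,\theta_1\in[0,2\pi)$, we therefore always have $|z_0|=|z_1|$ and $\theta_0,\theta_1$ in the same closed half-plane, with $\theta_0\mapsto\theta_1$ a strictly increasing homeomorphism.

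Part \ref{itemstrong-e} (strong retraction) is the easiest: if $f\in\orth_3$, then Lemma \ref{lemmaflattening} gives $\rho(f,4)=f$ and $p_n(f,4)=f(e_n)$, so by uniqueness of the orientation-compatible isometry matching $f$ on $e_1,e_2$ we have $Q_\mr{fin}(f)=f$. Then $\phi(f)=\id$, so $z_1=z_0$, the interpolation collapses to $z_0$, and $\rho(f,t;x)=\fmagellan(f,4)(z_0)=\rho(f,4;x)=f(x)$.

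For part \ref{itemcontinuous-e} (continuity): Lemmas \ref{lemmaflattening} and \ref{lemma-sivpointmap} provide continuous dependence of $\rho(f,4)$, $\fmagellan(f,4)$, and the $p_n(f,4)$ on $f$; the orientation of $f$ is locally constant on $\hom(\sphere^2)$, so $Q_\mr{fin}(f)$ is continuous. Lemma \ref{lemma-invariable-convergence} then propagates continuity through the compositions and inverses defining $z_0(f;\cdot)$ and $z_1(f;\cdot)$. The delicate point is the logarithmic branch cut along the positive real axis: as $x$ moves so that $z_0(f;x)$ crosses that ray, both $\theta_0$ and $\theta_1$ jump by the same $\pm 2\pi$ by half-plane preservation of $\phi$, so $(4-t)\log z_1+(5-t)\log z_0$ changes by an integer multiple of $2\pi i$ and the exponential remains continuous in $x$. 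Strict monotonicity of $\theta_0\mapsto\theta_1$ together with the positivity of the interpolation coefficients on $[4,5]$ makes $z_0\mapsto z_1^{4-t}z_0^{5-t}$ a self-homeomorphism of $\overline{\mb{C}}$, so $\rho(f,t)\in\hom(\sphere^2)$; joint sup-metric continuity in $(f,t)$ follows from Lemma \ref{lemma-invariable-convergence} and uniform continuity of the exponential on bounded log-arguments.

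For part \ref{itemequivariant-e} (equivariance): Lemmas \ref{lemmaflattening} and \ref{lemma-sivpointmap} yield $\rho(Qfs,4)=Q\rho(f,4)s$, $\fmagellan(Qfs,4;z)=Q\fmagellan(f,4;sz)$, and $p_n(Qfs,4)=Qp_{sn}(f,4)$; using $p_{-n}(f,4)=-p_n(f,4)$ and multiplicativity of orientation, these force $Q_\mr{fin}(Qfs)=QQ_\mr{fin}(f)s$ (with $s\in\{\pm1\}$ acting as scalar multiplication on $\mb{R}^3$), whence $z_0(Qfs;x)=s\,z_0(f;sx)$ and $z_1(Qfs;x)=s\,z_1(f;sx)$. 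The $s=1$ case is immediate; for $s=-1$ the verification reduces to showing that negation of both $z_0,z_1$ in the exponential formula produces a single overall factor $-1$, which holds under the $[0,2\pi)$-branch precisely because $z_0$ and $z_1$ lie in the same half-plane, so $\log z_0$ and $\log z_1$ are both shifted by the same $\pm i\pi$. This sign cancels against the $s$ absorbed by $\fmagellan(Qfs,4)$ to give $\rho(Qfs,t)=Q\rho(f,t)s$. The main obstacle throughout is handling the multi-valued logarithm consistently; half-plane preservation of $\phi(f)$, anchored at the four fixed points $\{1,\im,-1,-\im\}$, is precisely what allows both the branch-cut jumps of $\log z_0,\log z_1$ to cancel (continuity) and the antipodal-negation shifts to combine into a single $-1$ (equivariance).
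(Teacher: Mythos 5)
Your proof is correct and follows essentially the same route as the paper: the same identities $\zeta_0(Qfs)=s\zeta_0(f)s$, $Q_\mr{fin}(Qfs)=QQ_\mr{fin}(f)s$, $\zeta_1(Qfs;x)=s\zeta_1(f;sx)$ drive equivariance, part \ref{itemstrong-e} reduces to $Q_\mr{fin}(f)=f$ hence $z_1=z_0$, and continuity comes from Lemmas \ref{lemma-sivpointmap}, \ref{lemmaflattening}, and \ref{lemma-invariable-convergence} applied to the compositions defining $\zeta_0,\zeta_1$. Your treatment of the logarithm's branch cut (both $\log z_0$ and $\log z_1$ jumping or shifting by the same amount because $\phi(f)$ fixes $\pm 1$ and preserves the half-planes, with the interpolation coefficients summing to $1$) is in fact more explicit than the paper's, which simply writes $\log(s\zeta)=\log\zeta+\log s$ and separates the angular and radial behavior after conjugating by $Q_\mr{fin}(f)^{-1}$.
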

}

\begin{proof}[Proof of Lemma \ref{lemmaequatorial}]
For $x \in e_3^\bot$, we have $\rho(f,4;x) \in \smagellan(f,4)$, so $z_0 \in \sphere^1$, so $|z_0| = 1$.
Hence, $\zeta_1(f;x)$ restricted to $x \in e_3^\bot$ is a composition of homeomorphisms;
namely 
\begin{align*}
\zeta_1(f;x) 
&= [\fmagellan(f,4)]^{-1}\circ Q_\mr{fin}(f)\circ [\rho(f,4)]^{-1}\circ \fmagellan(f,4)] \left( [[\fmagellan(f,4)]^{-1}\circ \rho(f,4)](x)  \right) \\
&= [\fmagellan(f,4)]^{-1}\circ Q_\mr{fin}(f)](x).
\end{align*}

Likewise, the map $\zeta_1(f)\circ [\zeta_0(f)]^{-1}$, which sends $z_0$ to $z_1$, is a self-homeomorphism of $\sphere^1$, and since $Q_\mr{fin}(f)$ and $\rho(f,4)$ have the same orientation, this map is orientation preserving. 
Also, 
\begin{align*} 
[\zeta_1(f)\circ [\zeta_0(f)]^{-1}](1) 
&= [[\fmagellan(f,4)]^{-1}\circ Q_\mr{fin}(f)\circ [\rho(f,4)]^{-1}\circ \fmagellan(f,4)](1) \\
&= [[\fmagellan(f,4)]^{-1}\circ Q_\mr{fin}(f)\circ [\rho(f,4)]^{-1}]](p_1(f,4)) \\
&= \fmagellan(f,4;p_1(f,4)) \\
&= 1,
\end{align*}
so the phase of $z_1$ as a function of the phase of $z_0$, which is a the map given by 
$\zeta_\mr{ph}(\theta) = [\theta \mapsto -\im[\log \circ \zeta_1(f)\circ [\zeta_0(f)]^{-1}](\mr{e}^{\im\theta})]$, 
is strictly increasing, and the restriction to $(0,2\pi)_\mb{R}$ is a self-homeomorphism of $(0,2\pi)_\mb{R}$,
provided that we choose the appropriate branch of the log.   
Consider the map 
\[\widetilde\rho(f,t) = [\fmagellan(f,4)]^{-1}\circ \rho(f,t)\circ [\rho(f,4)]^{-1} \circ  \fmagellan(f,4),\] 
which sends $z_0$ to $z_1^{4-t} z_0^{5-t}$.  
%
The phase of $\widetilde\rho(f,t;z_0)$ as a function of the phase of $z_0$, 
which is given by 
$[\theta \mapsto -\im\log(\widetilde\rho(f,t;e^{\im\theta})) = (4-1)\zeta_\mr{ph}(\theta) +(5-t)\theta$, 
is also a strictly increasing self-homeomorphism $(0,2\pi)_\mb{R}$. 
Hence, the restriction of $\widetilde\rho(f,t)$ to $\sphere^1$ is a self-homeomorphism of $\sphere^1$. 
The change in $\widetilde\rho(f,t)$ with respect to change in magnitude is defined to be the identity, so $\widetilde\rho(f,t)$ is a self-homeomorphism of $\overline{\mb{C}}$ with the metric induced by stereographic projection. 
Thus, $\rho(f,t) \in \hom(\sphere^2)$, as it should be. 

For the last part of the lemma, we have 
\begin{align*}
\rho(f,5;x)
&= [\fmagellan(f,4)] \left( z_1 \right) \\
&= [\fmagellan(f,4)] \left( [\fmagellan(f,4)]^{-1}\circ Q_\mr{fin}(f)](x) \right) \\
&= [Q_\mr{fin}(f)](x). \qedhere
\end{align*}
\end{proof}

\begin{proof}[Proof of Lemma \ref{lemmadivvying} part \ref{itemcontinuous-e}]
If we extend the definition above for $\rho(f,t)$ on $t \in (4,5]_\mb{R}$ to $t=4$, we get 
\begin{align*}
[\fmagellan(f,4)] \left( z_1^{4-t} z_0^{5-t} \right)
&= [\fmagellan(f,4)] \left(z_0\right) \\
&= [[\fmagellan(f,4)] \circ [\fmagellan(f,4)]^{-1}\circ \rho(f,4)](x) \\
&= \rho(f,4;x).
\end{align*} 
Thus, it suffices to show that $\rho(f,t)$ continuous on $t \in [4,5]_\mb{R}$ using the definition above.
Let 
\begin{align*}
\phi_\mr{Q}(f,t) 
&= Q_\mr{fin}(f)^{-1}\circ\rho(f,t)\circ [\rho(f,4)]^{-1} \circ Q_\mr{fin}(f) \\
&= [Q_\mr{fin}(f)^{-1} \circ  \fmagellan(f,4)] \circ \widetilde \rho(f,t) \circ [Q_\mr{fin}(f)^{-1} \circ  \fmagellan(f,4)]^{-1}. \end{align*}
On the restriction to $e_3^\bot$, $\rho_\mr{Q}(f,t)$ is a composition of functions and inverses of functions that depend continuously in the sup-metric on $f$ and $t$,  
so by Lemma \ref{lemma-invariable-convergence}, the map $\rho_\mr{Q}(f,t)$ is also continuous in the sup-metric on the restriction to $e_3^\bot$.  
Since change in $\widetilde \rho(f,t)$ with respect to change in magnitude is the identity, 
and $Q_\mr{fin}(f)^{-1} \circ  \fmagellan(f,4)$ is the stereographic projection though $e_3$, 
we have that the change in $\phi_\mr{Q}$ 
with respect to change along a great circle though $e_3$ is the identity.
Hence, $\rho_\mr{Q}(f,t)$ is continuous in the sup-metric on all of $\sphere^2$.
Thus, $\rho(f,t)$ is continuous in the sup-metric
\end{proof}

\begin{proof}[Proof of Lemma \ref{lemmadivvying} part \ref{itemequivariant-e}]
For arbitrary $Q \in \orth_3$ and $t \in [4,5]_\mb{R}$ we have the following. 
\begin{align*}
\zeta_0(Qfs) 
&= [\fmagellan(Qfs,4)]^{-1}\circ \rho(Qfs,4) \\
&= s[\fmagellan(f,4)]^{-1}Q^{-1}\circ Q\rho(f,4)s \\
&= s\zeta_0(f)s.
\end{align*}
Since $p_i(Qfs,4) = Qp_{si}(f,4)$, 
and $QQ_\mr{fin}(f)s$ sends $e_i$ to $Qp_{si}(f,4)$ for $i \in \{1,2,-1,-2\}$ 
and has the same orientation as $Qfs$, 
we have 
$Q_\mr{fin}(Qfs) = QQ_\mr{fin}(f)s$. 
\begin{align*}
\zeta_1(Qfs;x) 
&= |\zeta_0(Qfs;x)| [[\fmagellan(Qfs,4)]^{-1}\circ Q_\mr{fin}(Qfs)\circ [\rho(Qfs,4)]^{-1}\circ \fmagellan(Qfs,4)] \left(\tfrac{\zeta_0(Qfs;x)}{|\zeta_0(Qfs;x)|}\right) \\ 
&= |\zeta_0(f;sx)| [s[\fmagellan(f,4)]^{-1}Q^{-1}\circ QQ_\mr{fin}(f)s\circ s[\rho(f,4)]^{-1}Q^{-1}\circ Q\fmagellan(f,4)s] \left(\tfrac{s\zeta_0(f;sx)}{|\zeta_0(f;sx)|}\right) \\ 
&= s\zeta_1(f;sx)
\end{align*}
\begin{align*}
\rho(Qfs,t;x) 
&= [\fmagellan(Qfs,4)] \left( \mr{e}^{(4-1)\log(\zeta_1(Qfs;x)) +(5-t)\log(\zeta_0(Qfs;x))} \right) \\
&= [Q\fmagellan(f,4)s] \left( \mr{e}^{(4-1)\log(s\zeta_1(f;sx)) +(5-t)\log(s\zeta_0(f;sx))} \right) \\
&= [Q\fmagellan(f,4)s]\left( \mr{e}^{(4-1)\log(\zeta_1(f;sx)) +(5-t)\log(\zeta_0(f;sx)) +\log(s)} \right) \\
&= Q\rho(f,t;sx). \qedhere
\end{align*}
\end{proof}

\begin{proof}[Proof of Lemma \ref{lemmadivvying} part \ref{itemstrong-e}]
Suppose $f \in \orth_3$.  Then, by Lemma \ref{lemmaflattening}, $\rho(f,4) = f$, and since $\rho(f,4)$ satisfies the defining properties of $Q_\mr{fin}$, we have $Q_\mr{fin}(f) = f$ as well, so $z_1 = z_0$, so $z_1^{4-t}z_0^{4-t} = z_0$, so $\rho(f,t) = \rho(f,4) = f$. 
\end{proof}

\subsection{Combing}

In the sixth stage, we use Alexander's trick on each hemisphere to deform the map to an isometry; see Figure \ref{figureFiveSix}. 

For $t \in (5,6]_\mb{R}$, we define the following. 
Let 
\[ J(f,r;z) = \begin{cases}
rg\left(f;\tfrac{z}{r}\right) & 0 \leq |z| < r \\ 
z & r \leq |z| \leq \tfrac{1}{r} \\
\tfrac{1}{r}g(f;rz) & \tfrac{1}{r} < |z| \leq \infty 
\end{cases}  \]
where $g(f) = [\fmagellan(f,4)]^{-1} \circ \rho(f,5) \circ [Q_\mr{fin}(f)]^{-1} \circ \fmagellan(f,4)$, 
and let 
\[ \rho(f,t) = \fmagellan(f,4) \circ J(f,6-t) \circ [\fmagellan(f,4)]^{-1} \circ Q_\mr{fin}(f). \]

\begin{lemma}\label{lemmahomeo56}
For $t \in (5,6]_\mb{R}$, $\rho(f,t) \in \hom(\sphere^2)$. 
\end{lemma}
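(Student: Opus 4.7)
The plan is to show that $J(f,r) \in \hom(\overline{\mb{C}})$ for $r = 6-t \in [0, 1)$; then $\rho(f,t) = \fmagellan(f,4) \circ J(f,6-t) \circ [\fmagellan(f,4)]^{-1} \circ Q_\mr{fin}(f)$ is a composition of homeomorphisms. When $r = 0$, $J(f,0) = \id$ trivially, so assume $r > 0$.

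First I would show that $g(f) \colon \overline{\mb{C}} \to \overline{\mb{C}}$ is a homeomorphism that fixes $\sphere^1$ pointwise. It is a composition of four homeomorphisms, namely $\fmagellan(f,4)$, $[Q_\mr{fin}(f)]^{-1}$, $\rho(f,5)$ (which lies in $\hom(\sphere^2)$ by Lemma \ref{lemmaequatorial}), and $[\fmagellan(f,4)]^{-1}$. For $z \in \sphere^1$, Lemma \ref{lemma-sivpointmap} gives $\fmagellan(f,4)(z) \in \smagellan(f,4)$; moreover $\smagellan(f,4) = Q_\mr{fin}(f)(e_3^\bot)$ since $Q_\mr{fin}(f) \in \orth_3$ sends $e_1, e_2$ to $p_1(f,4), p_2(f,4)$, which are orthonormal vectors spanning the great circle $\smagellan(f,4)$. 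Thus $[Q_\mr{fin}(f)]^{-1} \circ \fmagellan(f,4)(z) \in e_3^\bot$, where Lemma \ref{lemmaequatorial} gives $\rho(f,5) = Q_\mr{fin}(f)$. Therefore the inner three maps return $\fmagellan(f,4)(z)$, and $[\fmagellan(f,4)]^{-1}$ brings this back to $z$.

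Next I would show $g(f)(\mathbf{D}) = \mathbf{D}$. Consider $h = [Q_\mr{fin}(f)]^{-1} \circ \rho(f,5) \in \hom(\sphere^2)$. This is the identity on $e_3^\bot$ (by the computation above) and is orientation-preserving, since $\rho(f,5)$ and $Q_\mr{fin}(f)$ both inherit the orientation of $f$ from the preceding stages of the deformation. An orientation-preserving self-homeomorphism of $\sphere^2$ fixing a great circle pointwise must preserve each hemisphere (via a local Brouwer-degree argument at any equatorial point), so $h(C_3) = C_3$, which gives $\rho(f,5)(C_3) = Q_\mr{fin}(f)(C_3)$. Writing $A = \fmagellan(f,4)(\mathbf{D})$ for the hemisphere bounded by $\smagellan(f,4)$ and $B = [Q_\mr{fin}(f)]^{-1}(A) \in \{C_3, C_{-3}\}$, we get $\rho(f,5)(B) = Q_\mr{fin}(f)(B) = A$, so $g(f)(\mathbf{D}) = [\fmagellan(f,4)]^{-1}(A) = \mathbf{D}$, and symmetrically $g(f)(\overline{\mb{C}} \setminus \mathbf{D}) = \overline{\mb{C}} \setminus \mathbf{D}$.

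Finally I would verify that $J(f,r) \in \hom(\overline{\mb{C}})$. Continuity at the boundary circles follows from $g(f)|_{\sphere^1} = \id$: for $|z| = r$ the inner formula gives $r g(f; z/r) = r \cdot (z/r) = z$, matching the annular value $z$, and similarly for $|z| = 1/r$. Each of the three pieces is a bijection onto the corresponding region, using $g(f)(\mathbf{D}) = \mathbf{D}$ and $g(f)(\overline{\mb{C}} \setminus \mathbf{D}) = \overline{\mb{C}} \setminus \mathbf{D}$; hence $J(f,r)$ is a continuous bijection of the compact Hausdorff space $\overline{\mb{C}}$, and therefore a homeomorphism. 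The main obstacle is the orientation/hemisphere-preservation claim in the second step; it rests on tracking the orientation of $\rho(f,\cdot)$ through all prior stages, which is routine but worth spelling out, and on the topological fact that an orientation-preserving homeomorphism of $\sphere^2$ fixing a great circle must preserve each hemisphere.
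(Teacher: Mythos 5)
Your proposal is correct and follows the same core strategy as the paper: show that $g(f)$ restricts to the identity on $\sphere^1$ via Lemma \ref{lemmaequatorial} and the fact that $[Q_\mr{fin}(f)]^{-1}\circ\fmagellan(f,4)$ carries $\sphere^1$ into $e_3^\bot$, then conclude that the three cases of $J(f,r)$ glue continuously. The one substantive addition is your middle step verifying $g(f)(\mathbf{D}) = \mathbf{D}$ via orientation-preservation of $[Q_\mr{fin}(f)]^{-1}\circ\rho(f,5)$; the paper's proof passes over this in silence, yet it is genuinely needed for $J(f,r)$ to be a bijection (a homeomorphism of $\overline{\mb{C}}$ fixing $\sphere^1$ pointwise could a priori swap the inside and outside, as $z \mapsto \nicefrac{1}{\overline{z}}$ does), so your extra care here is a legitimate improvement rather than a detour.
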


\begin{lemma}\label{lemmarhoqfin}
$\rho(f,6) = Q_\mr{fin}(f) \in \orth_3$. 
\end{lemma}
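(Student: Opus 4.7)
The plan is to simply substitute $t=6$ into the defining formula for $\rho(f,t)$ and unpack the piecewise definition of $J(f,r)$ at the limiting value $r = 6-t = 0$. Reading off the cases, the first case $0 \leq |z| < 0$ is vacuous, the third case $\tfrac{1}{0} < |z| \leq \infty$ is vacuous (interpreting $\tfrac{1}{0} = \infty$ in $\overline{\mb{C}}$), and only the middle case survives and covers all of $\overline{\mb{C}}$, where $J(f,0;z) = z$. Hence $J(f,0) = \id_{\overline{\mb{C}}}$.

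Substituting this into the formula $\rho(f,t) = \fmagellan(f,4) \circ J(f,6-t) \circ [\fmagellan(f,4)]^{-1} \circ Q_\mr{fin}(f)$ at $t=6$ gives
\[ \rho(f,6) = \fmagellan(f,4) \circ [\fmagellan(f,4)]^{-1} \circ Q_\mr{fin}(f) = Q_\mr{fin}(f). \]
Since $Q_\mr{fin}(f)$ was defined to lie in $\orth_3$ (as the element sending $e_1,e_2$ to $p_1(f,4),p_2(f,4)$ with matching orientation), we have $\rho(f,6) \in \orth_3$ as required.

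There is essentially no obstacle here: the entire content of the lemma is the boundary-value bookkeeping that was built into the definition of $J$, so the proof is a one-line unfolding of definitions. The only minor point worth mentioning explicitly is the convention that in $\overline{\mb{C}}$ the inequality $\tfrac{1}{r} < |z|$ becomes $|z| = \infty$ at $r=0$, which makes the third case empty; this is justified because the metric on $\overline{\mb{C}}$ is the one induced by stereographic projection from the sphere, under which $\infty$ is a genuine point.
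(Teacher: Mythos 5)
Your proof is correct and matches the paper's argument exactly: the paper likewise observes that $J(f,0) = \id$ at $t=6$ and then collapses the composition $\fmagellan(f,4) \circ [\fmagellan(f,4)]^{-1} \circ Q_\mr{fin}(f)$ to $Q_\mr{fin}(f)$. Your extra remark about which cases of the piecewise definition become vacuous at $r=0$ is a harmless elaboration of what the paper leaves implicit.
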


\vbox{
\begin{lemma}\label{lemmacombing} 
On $t \in [4,5]_\mb{R}$, the following hold.
\begin{enumerate}
\item \label{itemcontinuous-f}
In the sup-metric, $\rho(f,t)$ depends continuously on $f$ and $t$.
\item \label{itemequivariant-f}
For all $Q \in \orth_3$ and $s \in \{1,-1\}$, 
we have $\rho(Qfs,t) = Q\rho(f,t)s$. 
\item \label{itemstrong-f}
If $f \in \orth_3$, then $\rho(f,t) = f$.
\end{enumerate}
\end{lemma}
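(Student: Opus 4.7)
The plan is to trace the four factors of $\rho(f,t) = \fmagellan(f,4) \circ J(f,6-t) \circ [\fmagellan(f,4)]^{-1} \circ Q_\mr{fin}(f)$ and verify each property in turn. Two preliminary observations make the bookkeeping clean. First, by Lemma \ref{lemmaequatorial} we have $\rho(f,5;x) = Q_\mr{fin}(f;x)$ on $e_3^\bot$, and $\fmagellan(f,4)$ sends $\sphere^1$ onto $e_3^\bot$; unwinding the definition of $g$ then gives $g(f;z) = z$ for every $z \in \sphere^1$, which is exactly what makes the piecewise definition of $J(f,r)$ agree on the seams $|z| = r$ and $|z| = 1/r$, so $J(f,r) \in \hom(\overline{\mb{C}})$ for every $r \in [0,1]_\mb{R}$. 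Second, by Lemma \ref{lemmaflattening} the four points $p_{\pm 1}(f,4), p_{\pm 2}(f,4)$ are perpendicular and depend continuously on $f$, so $Q_\mr{fin}(f)$ is well defined and continuous in $f$.

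For part \ref{itemcontinuous-f}, I would chain continuity through the four factors using Lemma \ref{lemmasupconvergence}. The maps $\fmagellan(f,4)$ and $[\fmagellan(f,4)]^{-1}$ are continuous in $f$ by Lemma \ref{lemmaflattening} combined with Lemmas \ref{lemma-sivpointmap} and \ref{lemma-invariable-convergence}; $Q_\mr{fin}(f)$ is continuous by the above observation; and $\rho(f,5)$ is continuous in $f$ by Lemma \ref{lemmadivvying}. Hence $g(f)$ is continuous in $f$. Joint continuity of $J(f,r;z)$ in $(f, r, z)$ then reduces to a standard Alexander-trick estimate: on each of the three radial shells $J$ is a continuous composition of $g$ with a radial scaling, and the scalings preserve the stereographic distance to $\id$ up to a factor of $r$, so $J(f,r) \to \id$ uniformly as $r \to 0$.

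For part \ref{itemequivariant-f}, I would first verify $Q_\mr{fin}(Qfs) = Q\,Q_\mr{fin}(f)\,s$: the right-hand side sends $e_i$ to $Q p_{si}(f,4) = p_i(Qfs,4)$ for $i \in \{1,2\}$ by Lemma \ref{lemmaflattening} and has the same orientation as $Qfs$, matching the defining properties of $Q_\mr{fin}(Qfs)$. Combining this with the equivariance of $\fmagellan(\cdot,4)$ from Lemma \ref{lemmaflattening} and $\rho(\cdot, 5)$ from Lemma \ref{lemmadivvying}, a conjugation computation yields $g(Qfs;z) = s\,g(f;sz)$. Since $z \mapsto sz$ preserves each of the three radial shells $|z|<r$, $r \leq |z| \leq 1/r$, $|z|>1/r$, a case-by-case check on the piecewise definition gives $J(Qfs,r;z) = s\,J(f,r;sz)$. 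Substituting into the formula for $\rho$ then produces $\rho(Qfs,t;x) = Q\rho(f,t;sx)$.

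For part \ref{itemstrong-f}, if $f \in \orth_3$ then Lemma \ref{lemmaflattening} gives $p_i(f,4) = f e_i$, so $Q_\mr{fin}(f) = f$, while Lemma \ref{lemmadivvying} gives $\rho(f,5) = f$. Substituting, $g(f) = [\fmagellan(f,4)]^{-1} \circ f \circ f^{-1} \circ \fmagellan(f,4) = \id$, so $J(f,r) = \id$ for every $r$, and hence $\rho(f,t) = \fmagellan(f,4) \circ \id \circ [\fmagellan(f,4)]^{-1} \circ f = f$. The main obstacle is the joint continuity of $J(f,r;z)$ at $r = 0$, where the scaling factors formally degenerate; this is handled by noting that in the stereographic metric on $\overline{\mb{C}}$ the disks of Euclidean radius $r$ around $0$ and $\infty$ shrink uniformly as $r \to 0$, so the difference $J(f,r) - \id$ tends to zero uniformly in $f$.
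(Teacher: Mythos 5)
Your proposal is correct and follows essentially the same route as the paper: establish $g(f;z)=z$ on $\sphere^1$ so that $J(f,r)$ is a well-defined homeomorphism, chain continuity of the four factors through Lemma \ref{lemmasupconvergence} together with Lemmas \ref{lemmaflattening} and \ref{lemmadivvying}, compute $g(Qfs)=sg(f)s$ and hence $J(Qfs,r)=sJ(f,r)s$ for equivariance, and observe $g(f)=\id$ when $f\in\orth_3$. Your extra care with the joint continuity of $J(f,r;z)$ as $r\to 0$ fills in a step the paper passes over silently, but it is the same argument in substance.
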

}

\begin{proof}[Proof of Lemma \ref{lemmahomeo56}]
We claim that the restriction of $g(f)$ to $\sphere^1$ is the identity function.
For $z \in \sphere^1$, we have $\famundsen(f,4;z) \in \samundsen(f,4)$, so $[[Q_\mr{fin}(f)]^{-1} \circ \famundsen(f,4)](z) \in e_3^{-1}$, so 
by Lemma \ref{lemmaequatorial}, $[\rho(f,5) \circ [Q_\mr{fin}(f)]^{-1} \circ \fmagellan(f,4)](z) = \fmagellan(f,4;z)$, so $g(f;z) = [[\fmagellan(f,4)]^{-1} \circ \fmagellan(f,4)](z) = z$, 
so the claim holds. 

Hence, if $|z| = r$ then $rg(f;\frac{z}{r}) = z$, and if $|z| = \frac1r$ then $\frac1r g(f;rz) = z$. 
Therefore, $J(f,r)$ is continuous along the boundary between cases, and similarly for $[J(f,r)]^{-1}$, so $J(f,r)$ is a homeomorphism, so $\rho(f,t)$ is a homeomorphism.
\end{proof}

\begin{proof}[Proof of Lemma \ref{lemmarhoqfin}]
At $t=6$, we have $J(f,0) = \id_\mb{C}$, so 
$\rho(f,6) = \fmagellan(f,4) \circ [\fmagellan(f,4)]^{-1} \circ Q_\mr{fin}(f) = Q_\mr{fin}(f)$.
\end{proof}

\begin{proof}[Proof of Lemma \ref{lemmacombing} part \ref{itemcontinuous-f}]
By Lemmas \ref{lemmasupconvergence}, \ref{lemmaflattening}, and \ref{lemmadivvying}, 
$g(f)$ depends continuously on $f$, so $J(f,r)$ also depends continuously on $f$ and $r$, 
so $\rho(f,t)$ depends continuously on $f$ and $t$.
\end{proof}

\begin{proof}[Proof of Lemma \ref{lemmacombing} part \ref{itemequivariant-f}]
For $Q \in \orth_3$ and $t \in (5,6]_\mb{R}$, we have the following.
\begin{align*} 
g(Qfs)
&= [\fmagellan(Qfs,4)]^{-1} \circ \rho(Qfs,5) \circ [Q_\mr{fin}(Qfs)]^{-1} \circ \fmagellan(Qfs,4) \\
&= s[\fmagellan(f,4)]^{-1}Q^{-1} \circ Q\rho(f,5)s \circ s[Q_\mr{fin}(f)]^{-1}Q^{-1} \circ Q\fmagellan(f,4)s \\
&= sg(f)s,
\end{align*}
so $J(Qfs,r) = sJ(f,r)s$, so 
\begin{align*} 
\rho(Qfs,t)
&= \fmagellan(Qfs,4) \circ J(Qfs,6-t) \circ [\fmagellan(Qfs,4)]^{-1} \circ Q_\mr{fin}(Qfs) \\
&= Q\fmagellan(f,4)s \circ sJ(f,6-t)s \circ s[\fmagellan(f,4)]^{-1}Q^{-1} \circ QQ_\mr{fin}(f)s \\
&= Q\rho(f,t)s. \qedhere
\end{align*}
\end{proof}

\begin{proof}[Proof of Lemma \ref{lemmacombing} part \ref{itemstrong-f}]
Suppose $f \in \orth_3$.  Then, $\rho(f,5) = Q_\mr{fin}(f) = f$, so $g(f) = \id_\mb{C}$, so $J(f,r) = \id_\mb{C}$, so $\rho(f,t) = f$. 
\end{proof}

\subsection{Putting it together}

\begin{proof}[Proof of Theorem \ref{theoremDeformation}]
By Lemma \ref{lemmabalancing} Part \ref{itemstart}, we have $\rho(f,0) =f$, 
so by Part \ref{itemcontinuous-a} of Lemmas \ref{lemmabalancing}, \ref{lemmauntangling}, \ref{lemmaaligning}, \ref{lemmaflattening}, \ref{lemmadivvying}, \ref{lemmacombing}, $\rho$ is a continuous deformation of $\hom(\sphere^2)$.
By Part \ref{itemstrong-a} of the above lemmas and Lemma \ref{lemmarhoqfin}, $\rho$ is a strong deformation retraction to $\orth_3$, and by Part \ref{itemequivariant-a} of the above lemmas, $\rho$ is $(\orth_3\times \mb{Z}_2)$-equivariant.  
\end{proof}

\section{Nullity preserving homeomorphisms}\label{sectionDeformationN}

The goal of this section is to prove Theorem \ref{theoremDeformationN}. 
The construction will be similar, except that we will use a different definition for $\fstitch$. 
We will first define a function $\fradmul$ that extends a homeomorphism $f$ of the circle to a homeomorphism the disk that is smooth almost everywhere.  We will extend $f$ to the interior of the disk radially, but on each concentric circle, we mollify $f$ by convolution with a smooth bump function that approaches the Dirac delta distribution at the boundary of the disk.

Let $\bump(\eps)$ be a normalized smooth bump function with support on $[-\eps,\eps]_\mb{R}$. 
Specifically, let 
\[\bump(\eps;x) = 
\begin{cases}
\frac{1}{c\eps}\e^{\frac{-1}{1-(\frac{x}{\eps})^2}} & |x| < \eps \\
0 & |x| \geq \eps
\end{cases}
\]
where $c = \int_{-1}^{1}\e^{\frac{-1}{1-x^2}} \mathrm{d}x $.

Given a homeomorphism $f : \sphere^1 \to \sphere^1$ such that $f(1) = 1$, 
let $\fradmul : \mathbf{D} \to \mathbf{D}$ be defined as follows.
Let $g(\theta) = -\im \log \left(f\left(\e^{\im \theta}\right)\right)$,
and for $r\in(0,1)_\mb{R}$ let 
\begin{align*}
\fradmul(f;r\e^{\im\theta})
&= r\e^{\im \left[\bump(1-r) * g \right](\theta)}  \\
&= r\exp{\left(\displaystyle \frac{1}{c(1-r)} \int_{r-1}^{1-r} \e^{\frac{-1}{1-(\frac{x}{1-r})^2}} \log\left(f\left(\e^{\im (\theta-x)}\right)\right) \mathrm{d}x \right)},
\end{align*}
and $\fradmul(f;\e^{\im\theta}) = f(\e^{\im\theta})$ and $\fradmul(f;0) = 0$.

\begin{lemma}\label{lemmaFradmul}
If $f$ is a homeomorphism of the circle, then $\fradmul(f)$ satisfies the following.
\begin{enumerate}
\item
$\fradmul(f)$ preserves nullity. 
\item
$\fradmul(f)$ in the sup-metric depends continuously on $f$ in the sup-metric.
\item 
$\fradmul(f\circ [z \mapsto \nicefrac1z]) = \fradmul(f \circ [z \mapsto \overline{z}]) = \fradmul(f) \circ [z \mapsto \overline{z}]$
\end{enumerate}
\end{lemma}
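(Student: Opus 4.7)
The plan is to exploit that for each $r \in [0,1)$ the function $\theta \mapsto [\bump(1-r)*g](\theta)$ is smooth and strictly monotone, because the bump is smooth, compactly supported, strictly positive, and normalized, and the continuous lift $g$ of $f$ is strictly monotone. Differentiating under the integral then shows $\fradmul(f)$ is smooth on the punctured open disk $\mathbf{D}^\circ\setminus\{0\}$ with Jacobian $\partial_\theta[\bump(1-r)*g](\theta)$ of constant sign; continuity at $\sphere^1$ as $r\to 1^-$ is standard mollifier convergence (using uniform continuity of $g$), and continuity at the origin is immediate from the $r$-factor. Hence $\fradmul(f)$ is a continuous bijection of the compact disk, so a homeomorphism.

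For part (1), smoothness on $\mathbf{D}^\circ\setminus\{0\}$ makes $\fradmul(f)$ locally Lipschitz there, so it sends null subsets of that open set to null sets. The complement $\sphere^1\cup\{0\}$ is itself null and is mapped by $\fradmul(f)$ into $\sphere^1\cup\{0\}$, which is null; this gives the Luzin $N$ property for $\fradmul(f)$ globally. The inverse is smooth on the punctured disk by the inverse function theorem and sends $\sphere^1\cup\{0\}$ to itself, so the same argument shows it also preserves nullity.

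For part (2), suppose $f_k\to f_\infty$ uniformly with each $f_k(1)=1$. The normalized continuous lifts $g_k$ with $g_k(0)=0$ converge uniformly on $[0,2\pi]_\mb{R}$, since a continuous branch of $\arg(f_k/f_\infty)$ starting at $0$ tends to $0$ uniformly. For $r\leq 1-\delta$, Young's inequality
\[\bigl|[\bump(1-r)*g_k](\theta)-[\bump(1-r)*g_\infty](\theta)\bigr|\leq \|\bump(1-r)\|_1\cdot\|g_k-g_\infty\|_\infty = \|g_k-g_\infty\|_\infty\]
gives uniform convergence of $\fradmul(f_k)$ on $\{|z|\leq 1-\delta\}$. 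For $r\in(1-\delta,1]_\mb{R}$ the convolution bound degenerates, so I would instead use a triangle inequality comparing $\fradmul(f_k;re^{\im\theta})$ to $f_k(e^{\im\theta})$: uniform convergence of $g_k$ together with Heine-Cantor uniform continuity of $g_\infty$ yields equicontinuity of the family $\{g_k\}$, which controls the mollification error $\sup_{|x|<1-r}|g_k(\theta-x)-g_k(\theta)|$ uniformly in $k$. Choosing $\delta$ small and $k$ large combines the two regimes.

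For part (3), on $\sphere^1$ we have $\nicefrac 1 z=\overline z$, so both left-hand expressions agree. Setting $\widetilde f=f\circ[z\mapsto\overline z]$, the lift satisfies $\widetilde g(\theta)=g(-\theta)$, and evenness of $\bump(1-r;\cdot)$ combined with the substitution $y=-x$ yields $[\bump(1-r)*\widetilde g](\theta)=[\bump(1-r)*g](-\theta)$. Therefore $\fradmul(\widetilde f;re^{\im\theta})=\fradmul(f;re^{-\im\theta})=[\fradmul(f)\circ[z\mapsto\overline z]](re^{\im\theta})$ on the open punctured disk, and the defining special cases handle the boundary circle and origin. The main obstacle I foresee is the uniform estimate near $r=1$ in part (2): the convolution bound deteriorates as $1-r\to 0^+$, so one has to switch modes and use equicontinuity of $\{g_k\}$ to compare the mollified value directly to the boundary value $f_k(e^{\im\theta})$.
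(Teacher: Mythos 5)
Your strategy is the same as the paper's: smoothness and a nonvanishing angular derivative on the punctured open disk give the Luzin~N property in both directions via the inverse function theorem, the $L^1$-normalized convolution gives continuity, and evenness of the bump gives the symmetry identity; part (3) is fine as written. But there is a genuine gap in part (1). You justify the key claim --- that the Jacobian $\partial_\theta[\bump(1-r)*g](\theta)$ has ``constant sign'' --- by saying the convolution is smooth and strictly monotone. That inference is false: a smooth strictly increasing function can have vanishing derivative (e.g.\ $\theta\mapsto\theta^3$), and strict positivity is exactly what you cannot do without. If $\partial_\theta[\bump(1-r)*g]$ vanished on a set of positive measure in some annulus, $\fradmul(f)$ would send that set to a null set and $[\fradmul(f)]^{-1}$ would fail the Luzin~N property; and without nonvanishing derivative the inverse function theorem does not apply. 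The paper closes this by computing $\partial_\theta[\bump(\eps)*g] = [\partial_\theta\bump(\eps)]*g$ and using that $\partial_\theta\bump(\eps;\cdot)$ is positive on $(-\eps,0)_\mb{R}$ and negative on $(0,\eps)_\mb{R}$ (it is odd), so that after folding the integral the integrand becomes $\partial_\theta\bump(\eps;x)\bigl(g(\theta-x)-g(\theta+x)\bigr)>0$ for $x\in(0,\eps)_\mb{R}$. You need this step or an equivalent one.

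On part (2), the obstacle you foresee near $r=1$ is not real. The bump is normalized so that $\|\bump(1-r)\|_{1}=1$ for every $r<1$ (that is what the constant $c$ accomplishes), so your Young's-inequality bound $\|\bump(1-r)*(g_k-g_\infty)\|_\infty\le\|g_k-g_\infty\|_\infty$ is uniform in $r$ all the way up to the boundary, where $\fradmul(f_k)$ equals $f_k$ by definition. Your fallback via equicontinuity of $\{g_k\}$ would also work, but it is unnecessary; the paper's proof of part (2) is precisely the single uniform convolution estimate.
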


\begin{proof}
Observe that $[\bump(\eps)*g](\theta) \in [\min,\max]_\mb{R}(g([\theta-\eps,\theta+\eps]_\mb{R}))$ by comparing integrals, since for any constant function $c$ we have $[\bump(\eps)*c] = c$. 
Hence, $\fradmul(f)$ is continuous at the boundary of the disk. 

Suppose that $f$ is orientation preserving.
Then, $g$ is strictly increasing and quasiperiodic with $g(\theta+2\pi) = g(\theta)+2\pi$.
Hence, $\bump(\eps)*g$ is also strictly increasing and quasiperiodic with $[\bump(\eps)*g](\theta+2\pi) = [\bump(\eps)*g](\theta)+2\pi$. Hence, $e^{\im [\bump(\eps)*g](\theta)}$ is an orientation preserving homeomorphism of $\sphere^1$, so $\fradmul(f)$ is injective. 
Furthermore, $\partial_\theta \bump(\eps;\theta)$ is positive on $(-\eps,0)_\mb{R}$ and negative on $(0,\eps)_\mb{R}$, and since $g(\theta + \eps) > g(\theta)$, this implies 
$\partial_\theta [\bump(\eps)*g](\theta) = [\partial_\theta \bump(\eps;\theta)] *  g$ is always positive. 
Similarly, $\fradmul(f)$ is injective and $\partial_\theta [\bump(\eps)*g](\theta)$ is always negative in the case where $f$ is orientation reversing. 

In either case, $\partial_\theta \fradmul(f;r\e^{\im\theta})$ never vanishes for $r \in (0,1)_\mb{R}$. 
Also, 
$\partial_r |\fradmul(f;r\e^{\im\theta})| = \partial_r r = 1$, 
so $\partial_r \fradmul(f;r\e^{\im\theta})$ never vanishes.
By the dominated convergence theorem, the partial derivatives of $\fradmul(f)$ are continuous, so $\fradmul(f)$ is continuously differentiable on $\mathbf{D}^\circ\setminus0$.
Since $\fradmul(f)$ has no critical points in $\mathbf{D}^\circ\setminus0$, 
$\fradmul(f)^{-1}$ is continuously differentiable on $\mathbf{D}^\circ\setminus0$ by the inverse function theorem. 
Hence, both $\fradmul(f)$ and $\fradmul(f)^{-1}$ send null sets to null sets, so the first part holds.

For the second part,
consider $\eps > 0$ and $f_1, f_\infty$ such that $|f_1 - f_\infty| < \eps$,
and let $g_k$ be defined analogously. 
Then, $|g_1 - g_\infty| < 2\eps$, 
so for all $r$, we have $|[\bump(1-r)*g_1]-[\bump(1-r)*g_\infty]| = |\bump(1-r)*(g_1-g_\infty)| < 2\eps$, 
so $|\fradmul(f_1)-\fradmul(f_\infty)| < 2\eps$.
Hence, $\fradmul(f)$ in the sup-metric depends continuously on $f$ in the sup-metric.

For the third part,
observe that the maps $[z \mapsto \nicefrac1z]$ and $[z \mapsto \overline{z}]$ both act on $\sphere^1$ by reflecting the circle across the real line, so these are actually the same map, so the first equality holds. 
Also, 
$
-\im \log \left([f \circ [z \mapsto \nicefrac1z]] \left(e^{\im \theta}\right)\right) 
= -\im \log \left(f \left(e^{-\im \theta}\right)\right) 
= g(-\theta)
$,
and since $\bump(1-r)$ is symmetric about $0$, we have 
\begin{align*}
\fradmul(f \circ [z \mapsto \nicefrac1z];re^{\im\theta}) 
&= re^{\im \left[\bump(1-r) * (g \circ [\psi \mapsto -\psi] ) \right](\theta)} \\
&= re^{\im \left[\bump(1-r) * g \right](-\theta)} \\ 
&= \fradmul(f;re^{\im(-\theta)}) \\
&= \fradmul(f;\overline{re^{\im\theta}}), \\
\end{align*}
so 
$\fradmul(f\circ [z \mapsto \nicefrac1z]) = \fradmul(f) \circ [z \mapsto \overline{z}]$. 
\end{proof}

We redefine $\fstitch(f_1,f_2)$ as follows. 
Given embeddings $f_i : \mathbf{D} \to \sphere^2$ with $p_1 = f_1(1)=f_2(1)$,  $p_{-1} = f_1(-1)=f_2(-1)$,  and $S = f_1(\sphere^1) = f_2(\sphere^1)$, 
let $f_0$ be the same as before, which is
\[ 
f_0^{-1}: S \to \mathbf{S}^1, 
\quad f_0^{-1}(x) 
= \sqrt{\frac{f_1^{-1}(x)}{f_2^{-1}(x)}},
\] 
and let 
\begin{equation}\label{equationStitchNull}
\fstitch(f_1,f_2;z) = \begin{cases}
[f_1 \circ \fradmul(f_1^{-1} \circ f_0)](z) & |z| < 1 \\
f_0(z) & |z| = 1 \\
[f_2 \circ \fradmul(f_2^{-1} \circ f_0)](\nicefrac{1}{\overline{z}}) & |z| > 1. 
\end{cases}
\end{equation}

\begin{lemma}\label{lemmaStitchN}
Lemma \ref{lemma-stitch} also holds for $\fstitch$ redefined by (\ref{equationStitchNull}). 
Additionally:
\begin{enumerate}
\setcounter{enumi}{4}
\item\label{item-stitchNull}
If both $f_1$ and $f_2$ preserve nullity, then $\fstitch(f_1,f_2)$ preserves nullity.
\end{enumerate}
\end{lemma}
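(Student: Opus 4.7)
The plan is to rerun the proof of Lemma \ref{lemma-stitch} almost verbatim, replacing each occurrence of the radial formula $|z|f_i^{-1}\circ f_0(\nicefrac{z}{|z|})$ by $\fradmul(f_i^{-1}\circ f_0)(z)$ and substituting the corresponding part of Lemma \ref{lemmaFradmul} wherever the radial scaling was manipulated. The definition of $f_0$ is unchanged, so the analysis of $f_0$ and its branch-of-square-root behavior from the original proof transfers without modification. Continuity across $\sphere^1$ is automatic, since $\fradmul(g)$ agrees with $g$ on $\sphere^1$ by definition, so composing with $f_1$ (resp.\ $f_2$) recovers $f_0$ at the boundary and matches the middle case of (\ref{equationStitchNull}).

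For part \ref{item-stitchcontinuity}, the first step is to repeat the original argument to obtain $f_{0,k}\to f_{0,\infty}$ invariably; since $f_{i,k}^{-1}\circ f_{0,k}$ is a sequence of self-maps of $\sphere^1$, Lemma \ref{lemma-invariable-convergence} part \ref{item-fixed-domain} upgrades invariable convergence to uniform convergence. Lemma \ref{lemmaFradmul} part 2 then yields uniform convergence of $\fradmul(f_{i,k}^{-1}\circ f_{0,k})$ on $\mathbf{D}$, and post-composing with the invariably convergent $f_{i,k}$ gives uniform convergence of the interior piece of $\fstitch$; the exterior piece is handled identically. For parts \ref{item-stitchsorth}--\ref{item-stitchswap}, the strategy is to identify how $f_0$ transforms under each symmetry and observe that the induced action on $f_i^{-1}\circ f_0$ is covered by Lemma \ref{lemmaFradmul} part 3: in the $\orth_3$ case the $Q$'s cancel outright in $(Qf_i)^{-1}\circ(Qf_0)$; in the negation case $(-f_i)^{-1}\circ(-f_0(\overline{\cdot})) = (f_i^{-1}\circ f_0)\circ[z\mapsto\overline z]$, and since $[z\mapsto\overline z]|_{\sphere^1} = [z\mapsto 1/z]|_{\sphere^1}$, Lemma \ref{lemmaFradmul} part 3 commutes the reflection past $\fradmul$; and in the swap case the associated $f_0$ for $(f_2,f_1)$ is $f_0\circ[z\mapsto 1/z]|_{\sphere^1}$, so the same identity produces the $1/z$ substitution in the final answer after bookkeeping $1/\overline{1/z}=\overline z$.

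For the new part \ref{item-stitchNull}, the key initial observation is that $\sphere^1\subseteq \mathbf{D}$ has area $0$, so if $f_1$ preserves nullity then $S = f_1(\sphere^1)$ has area $0$. Given a null $A\subseteq \overline{\mb{C}}$, decompose $A = A_\mr{in}\sqcup A_\mr{on}\sqcup A_\mr{out}$ along $\sphere^1$: on $A_\mr{in}\subseteq \mathbf{D}^\circ$ the map $\fstitch(f_1,f_2) = f_1\circ \fradmul(f_1^{-1}\circ f_0)$ preserves nullity by Lemma \ref{lemmaFradmul} part 1 combined with the hypothesis on $f_1$; the exterior piece is handled symmetrically; and $\fstitch(A_\mr{on})\subseteq S$ is null since $S$ has area $0$. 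The reverse direction is symmetric, using that nullity preservation is by definition bidirectional.

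The main obstacle is the decomposition in part \ref{item-stitchNull}: one must explicitly notice that the nullity-preservation hypothesis on $f_1,f_2$ forces $S$ to have area $0$, which is precisely what makes the $A_\mr{on}$ case close. Everything else is a mechanical translation of the earlier proof through Lemma \ref{lemmaFradmul}, with one small care needed at $\sphere^1$ to confirm that the piecewise definition is continuous, which follows from the fact that $\fradmul$ restricts to the identity action on its argument at the boundary.
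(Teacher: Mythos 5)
Your proposal is correct and follows essentially the same route as the paper: check agreement of the three cases on $\sphere^1$, rerun the $f_0$ analysis unchanged, push each symmetry and the convergence statement through Lemma \ref{lemmaFradmul}, and use Lemma \ref{lemmaFradmul} part 3 to convert between $[z\mapsto\nicefrac1z]$ and $[z\mapsto\overline z]$ in the swap and negation cases. For part \ref{item-stitchNull} the paper simply cites Lemma \ref{lemmaFradmul}; your explicit decomposition of a null set into the pieces inside, on, and outside $\sphere^1$, together with the observation that $S=f_1(\sphere^1)$ is null because $f_1$ preserves nullity, is exactly the argument that citation suppresses.
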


\begin{proof}
Observe that in the case where $|z| = 1$, we have 
\begin{align*}
f_1 \circ \fradmul(f_1^{-1} \circ f_0)](z) = [f_1 \circ f_1^{-1} \circ f_0](z) &= f_0(z), \\
[f_2 \circ \fradmul(f_2^{-1} \circ f_0)](\nicefrac{1}{\overline{z}}) = [f_2 \circ f_2^{-1} \circ f_0](z) &= f_0(z),
\end{align*}
so the defining formulas for $\fstitch(f_1,f_2)$ in all 3 case coincide when $|z| = 1$. 
Hence, $\fstitch(f_1,f_2)$ is indeed a homeomorphism, 
and part \ref{item-stitchNull} holds by Lemma \ref{lemmaFradmul}.

Observe that $f_0$, i.e., the restriction of $\fstitch(f_1,f_2)$ to the unit circle, is defined in the same way as before, so Lemma \ref{lemma-stitch} still holds for $f_0$.  
Consider embeddings $f_{i,k}$ in the domain of $\fstitch$ such that $f_{i,k} \to f_{i,\infty}$ uniformly.
Just as before $f_{i,k}^{-1} \circ f_{0,k} \to f_{i,\infty}^{-1} \circ f_{0,\infty}$ invariably, 
so $\fradmul(f_{i,k}^{-1} \circ f_{0,k}) \to \fradmul(f_{i,\infty}^{-1} \circ f_{0,\infty})$ by Lemma \ref{lemmaFradmul},
so $\fstitch(f_{1,k},f_{2,k}) \to \fstitch(f_{1,\infty},f_{2,\infty})$,
so part \ref{item-stitchcontinuity} holds, i.e., $\fstitch$ is continuous. 
For $Q \in \sorth_3$, we have $(Qf_i) \circ \fradmul((Qf_i)^{-1} \circ (Qf_0)) = Q(f_i \circ \fradmul(f_i^{-1} \circ f_0))$, 
so just as before, $\fstitch(Q(f_1,f_2)) = Q\fstitch(f_1,f_2)$, 
which means part \ref{item-stitchsorth} holds. 
Similarly, $\fstitch(-(f_1,f_2);z) = -\fstitch(f_1,f_2;\overline{z})$, 
which means part \ref{item-stitchnegate} holds, 
and $\fstitch(f_2,f_1;z) = \fstitch(f_1,f_2;\nicefrac1z)$ on $z \in \sphere^1$, which means 
part \ref{item-stitchswap} holds on $z \in \sphere^1$.
In the case where $ |z| < 1$, we have $|\nicefrac{1}{z}| > 1$, so by Lemma \ref{lemmaFradmul}, we have  
\begin{align*} 
\fstitch(f_1,f_2;\nicefrac{1}{z}) 
&= [f_2 \circ \fradmul(f_2^{-1} \circ \fstitch(f_1,f_2))](\overline{z}) \\
&= [f_2 \circ \fradmul(f_2^{-1} \circ \fstitch(f_2,f_1) \circ [w \mapsto \nicefrac1w])](\overline{z}) \\
&= [f_2 \circ \fradmul(f_2^{-1} \circ \fstitch(f_2,f_1))](z) \\
&= \fstitch(f_2,f_1;z),
\end{align*}
and likewise in the case where $|z|> 1$, 
so part \ref{item-stitchswap} holds.
\end{proof}

\begin{proof}[Proof of Theorem \ref{theoremDeformationN}]
Let $\rho$ be similar to the deformation retraction from Section \ref{sectionDeformation}, except with $\fstitch$ redefined by equation (\ref{equationStitchNull}).
By Lemma \ref{lemmaStitchN}, the redefined map $\fstitch$ still satisfies all relevant properties needed for the proof of Theorem \ref{theoremDeformation}, so the redefined $\rho$ is still a strong $(\orth_3\times\mb{Z}_2)$-equivariant deformation retraction from $\hom(\sphere^2)$ to $\orth_3$. 
It only remains to show that if $f \in \hom_\mathrm{N}(\sphere^2)$, then $\rho(f,t) \in \hom_\mathrm{N}(\sphere^2)$.

For $f \in \hom_\mathrm{N}(\sphere^2)$, the curve $\smagellan$ is a null set, 
so $\fdivp(\smagellan,p_1,p_2,p_{-1},p_{-2})$ and $\fdivp(\smagellan,p_1,p_{-2},p_{-1},p_{2})$ 
preserve nullity, so by Lemma \ref{lemmaStitchN}, 
$\fmagellan \in \hom_\mathrm{N}(\sphere^2)$.  Also, $\famundsen \in \hom_\mathrm{N}(\sphere^2)$, so $\rho$ at each step is a composition of maps in $\hom_\mathrm{N}(\sphere^2)$, so $\rho(f,t) \in \hom_\mathrm{N}(\sphere^2)$. 
\end{proof}

\bibliographystyle{plain}
\bibliography{aho2-4}

\end{document}